\title{Integral Points Close to Smooth Plane Curves}
\author{ZiAn Zhao (Cody)}
\date{\today} 
\theoremstyle{plain} 
\newtheorem{thm}{Theorem}[section] 
\newtheorem{lem}[thm]{Lemma} 
\newtheorem{prop}[thm]{Proposition} 
\newtheorem{cor}[thm]{Corollary}
\theoremstyle{definition}
\newtheorem{defn}[thm]{Definition}
\theoremstyle{remark} 
\newtheorem{case}{Case}
\begin{document}
\maketitle 
\begin{abstract}
This is an exposition of a class of problems and results on the number of integral points close to plane curves. We give a detailed proof of a theorem of Huxley and Sargos, following the account of Bordell\`es. Along the way we correct an oversight in the proof, changing some of the explicit values of the constants in the theorem.
\end{abstract}

\tableofcontents 

\pagebreak
\section{Introduction}
The topic of this essay is the methods used to estimate the number of integer points around smooth curves in $\mathbb{R}^2$. In the second chapter, we will look at some relatively weaker statements and eventually build up to a theorem by Huxley and Sargos \citep{huxley1995points}. The explicit statement \citep[Theorem 5.5]{huxley} of the main theorem given by Huxley and Sargos has some issues. In Lemma \ref{7}, the only English-language version \citep[Lemma 5.13]{huxley} suggest that if a point is outside the proper major arc, then the Lagrange polynomial won't intersect it, but this is not the case. If we only fix the erroneous lemma, the proof of the main theorem will have to assume that two major arcs do not intersect for it to work, which is also not clear. However, we have developed some techniques to go around this oversight by proving that two proper majors do not intersect, which fortunately is enough to prove the most important part of the main theorem that the asymptotic inequality still holds. In the third chapter, we will introduce some improvements given by Huxley, Braton, and Tritonov to the main theorem and discuss why they work. In the final chapter, we will introduce two applications of the methods—the square-free number problems and estimating solutions for diophantine inequalities. This essay aims to introduce and understand this class of problems and methods in detail and illustrate them in a way that best fits my understanding. Hopefully, in the continuation of my studies, I will be able to use these interesting methods to solve problems in related subjects. The main source of this essay comes from the book Arithmetic Tales \citep{huxley} by Olivier Bordell\`es, it contains a neatly translated version of results coming from Huxley and Sargos and others in the related topics. Beside changes in proves and statements cause by the oversight we mentioned, most of other proofs are also modified to some degree.

\section{The Theorem of Huxley and Sargos}

One of the goals of this essay is to discuss a certain class of problems in regards of estimating the number of integral points around a smooth curve. At the start, we will take a look at some less generalized methods with strict conditions, their appearances  follow directly from the classical mean value theorem and the divided differences. The problem is then complicated by the appearance of major arcs which allows us to bound the integers in a different way, the goal of the first chapter is to use tools and definitions we will introduce to prove a finer version of the theorem  comes from Huxley and Sargos involving major arcs \citep{huxley1995points}, which is considered to be a more generalized method.
In this chapter, we may assume that $N\geq 4$ is an integer and $\delta$ and $c_0$ are small positive real numbers such that $0<\delta<\frac{1}{4}$. We introduce the following notations
\begin{align*}
\mathcal{S} (f,N,\delta) = \lbrace n\in [N,2N] \cap Z :\parallel f(n) \parallel<\delta) \rbrace.
\end{align*}
$\parallel f(n) \parallel<\delta$ means the nearest integer point has distance less than $\delta$ to $f(n)$ i.e. $(n,y)$ is the nearest integral point to $(n,f(n))$ where $y$ is an integer such that $|f(n)-y|<\delta$. Also
\begin{align*}
\mathcal{R}(f,N,\delta)=|\mathcal{S}(f,N,\delta)|.
\end{align*}
Notice the trivial bound $\mathcal{R}(f,N,\delta)\leq N+1$.

The theorem of Huxley and Sargos will be stated here as a motivation, the proof of the theorem will come in at the end of this chapter when we are in a position to do so. The theorem below
is stated differently from the book Arithmetic Tales \citep[p. 373--374 Theorem 5.5]{huxley} and the original paper by Huxley and Sargos \citep{huxley1995points} and the reason why is discussed in the introduction. 

\begin{thm} [Theorem of Huxley and Sargos]\label{thm1}
Let $k\geq3$ be an integer and $f\in C^k[N,2N]$ such that there exist $\lambda_k>0$ and $c_k\geq 1$ such that, for all $x\in [N,2N]$, we have
\begin{align*}
\lambda_k\leq |f^{(k)}(x)|\leq c_k\lambda_k. \label{1}\tag{1}
\end{align*}
Let $0<\delta<\frac{1}{4}$ be a real number. Then

\begin{itemize}
\item If the set of points in each proper major arc with denominator $\leq (a_k\delta)^{-1}$ can be covered with interval of length $L_j(a_kq_j\delta)^{-1/k}$ which is disjoint from all such covers for other major arcs, where $L_j$ and $q_j$ are the length and denominator associated with their proper major arcs:
\begin{align*}
\mathcal{R}(f,N,\delta)\leq \alpha_k N\lambda_k^{\frac{2}{k(k+1)}}+\beta_kN\delta^{\frac{2}{k(k-1)}}+8k^3(\frac{\delta}{\lambda_k}\Big)^{\frac{1}{k}}+2k^2(5e^3+1);
\end{align*}
where
\begin{align*}
\alpha_k=2k^2c_k^{\frac{2}{k(k+1)}} \quad\text{and} \quad \beta_k=4k^2(5e^3c_k^{\frac{2}{k(k-1)}}+1);
\end{align*}
\item otherwise:
\begin{align*}
\mathcal{R}(f,N,\delta)\leq &  \alpha_k N\lambda_k^{\frac{2}{k(k+1)}}+\beta_k N\delta^{\frac{2}{k(k-1)}}+16k^3(\frac{\delta}{\lambda_k}\Big)^{\frac{1}{k}}+2k^2(5e^3+1);
\end{align*}
where
\begin{align*}
\alpha_k=2k^2c_k^{\frac{2}{k(k+1)}} \quad\text{and} \quad \beta_k=30e^3k^2c_k^{\frac{2}{k(k-1)}}+4k^2;
\end{align*}

\end{itemize}
or alternatively we could write and use the above results as
\begin{align*}
\mathcal{R}(f,N,\delta)\ll N\lambda_k^{\frac{2}{k(k+1)}}+N\delta^{\frac{2}{k(k-1)}}+\big( \frac{\delta}{\lambda_k}\big)^\frac{1}{k}+1.
\end{align*}
\end{thm}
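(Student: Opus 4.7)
The plan is to split the points of $\mathcal{S}(f,N,\delta)$ into two groups according to whether they lie on a proper major arc, bound each contribution separately, and then add the pieces. First I would partition $[N,2N]$ into short blocks of length comparable to $(\delta/\lambda_k)^{1/k}$, the scale on which Lagrange interpolation through $k+1$ candidate points is meaningful. Given $k+1$ points $n_0<\cdots<n_k$ in $\mathcal{S}(f,N,\delta)$ with images within $\delta$ of integers $m_0,\ldots,m_k$, the standard divided-difference identity
\begin{align*}
f[n_0,\ldots,n_k]=\frac{f^{(k)}(\xi)}{k!}
\end{align*}
combined with the hypothesis $\lambda_k\leq |f^{(k)}|\leq c_k\lambda_k$ would produce a two-sided bound on the integer divided difference $[m_0,\ldots,m_k]$ after absorbing the $\delta$-errors. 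Comparing two such divided differences then forces, by a Dirichlet-type argument, a rational approximation $a/q$ to an appropriate derivative ratio, and the intervals about each such $a/q$ are the \emph{proper major arcs}. Any block containing no such rational approximation can contribute only $O(1)$ points of $\mathcal{S}$, and summing these contributions over all blocks in $[N,2N]$ produces the leading term of order $N\lambda_k^{2/(k(k+1))}$; this is the classical Huxley--Sargos skeleton and matches the first summand.

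Next I would handle the points that do lie on some proper major arc. Here, by hypothesis, each proper major arc is covered by an interval $I_j$ of length $L_j(a_kq_j\delta)^{-1/k}$, and on $I_j$ the function $f$ is pinned near a rational with denominator $q_j$, so $|\mathcal{S}\cap I_j|$ is controlled by the spacing between consecutive $n$'s for which $q_jf(n)$ lies within $q_j\delta$ of an integer. Summing $|I_j|$ over the major arcs of denominator at most $(a_k\delta)^{-1}$ and weighting by the count of admissible denominators $q$ yields the term $N\delta^{2/(k(k-1))}$, while the boundary effects between blocks account for the $(\delta/\lambda_k)^{1/k}$ contribution together with the additive constant. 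The two bulleted cases of the theorem then correspond to whether or not the covers belonging to distinct major arcs are disjoint: when they are disjoint I may sum lengths without double-counting, and when they are not I need the slightly weaker estimate in which the multiplicity is absorbed into the enlarged $\beta_k$.

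The main obstacle, and the point at which this exposition diverges from \citep{huxley}, is precisely the overlap question flagged in the introduction: the erroneous lemma claimed that a point outside a proper major arc cannot be hit by the interpolating Lagrange polynomial, which forced the original write-up to tacitly assume that distinct major arcs do not intersect. I would circumvent this by proving directly that two \emph{proper} major arcs must be disjoint---this should follow from comparing the two rational approximations $a/q$ and $a'/q'$ supplied by a hypothetical overlap point and showing that the Farey inequality $|a/q-a'/q'|\geq 1/(qq')$ is incompatible with both arcs simultaneously satisfying the propriety condition, so only one of them can be proper. Once disjointness is available in this refined form, the sum-over-arcs step in the previous paragraph is legitimate, and after collecting all error terms the unified asymptotic
\begin{align*}
\mathcal{R}(f,N,\delta)\ll N\lambda_k^{\frac{2}{k(k+1)}}+N\delta^{\frac{2}{k(k-1)}}+\Bigl(\frac{\delta}{\lambda_k}\Bigr)^{\frac{1}{k}}+1
\end{align*}
drops out. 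The remaining delicate work is the bookkeeping of the explicit constants $\alpha_k$ and $\beta_k$ in the two cases, rather than any new analytic input beyond the disjointness claim.
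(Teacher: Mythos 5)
Your high-level decomposition---split $\mathcal{S}(f,N,\delta)$ into points belonging to (proper) major arcs and the remainder, bound each piece, and add---matches the paper's strategy, and you are also right that the central new ingredient is a direct proof that proper major arcs do not overlap. But the internal machinery you describe does not line up with the objects the paper actually uses, and two of your claimed steps would fail as stated.

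First, the major arcs in this paper are not Dirichlet-type intervals around a rational approximation to a ``derivative ratio.'' A major arc is, by Definition \ref{def1}, a maximal run of $\geq k+1$ consecutive points $n_j\in\mathcal{S}(f,N,\delta)$ whose rounded values $\lfloor f(n_j)\rceil$ all lie on a single Lagrange interpolation polynomial $P\in\mathbb{Q}[X]$ of degree $<k$; the ``denominator'' $q$ is the least integer with $P\in\frac1q\mathbb{Z}[X]$, i.e.\ a denominator of a whole polynomial, not of a single fraction. Your sketch never shows how a block of $\mathcal S$ produces the rational $a/q$ you want to work with, and the denominator of a polynomial need not equal the denominator of its leading coefficient in lowest terms, so identifying the two is an additional unjustified step.

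Second, the Farey-inequality route to disjointness would not close. If two proper major arcs $\bar{\mathcal A}_i$ and $\bar{\mathcal A}_j$ shared enough points, uniqueness of Lagrange interpolation would force the polynomials (hence the leading coefficients) to coincide, at which point $|a/q-a'/q'|\geq 1/(qq')$ gives you nothing; and if they share few points, you have not isolated two distinct rationals that are both close to a common target, so the inequality has nothing to bite on. The paper's actual argument (inside the proof of Lemma \ref{lem8}) is different and quantitative: $\bar{\mathcal A}_i\not\subseteq\mathcal A_j$ by uniqueness of interpolation, so there is a point $a\in\bar{\mathcal A}_i$ with $a\notin\mathcal A_j$; Lemma \ref{lem7}(3) then gives $\operatorname{dist}(a,\bar{\mathcal A}_j)>d_j=L_j(a_kq_j\delta)^{-1/k}$, and symmetrically with $i,j$ swapped, and combining these with $d_j\geq L_j$, $d_i\geq L_i$ (valid when $q_i,q_j\leq Q_k$) yields the contradiction $L_i>L_j$ and $L_j>L_i$. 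This step is what the theorem's two bulleted cases are tracking---not whether the covering intervals happen to be disjoint, but whether the distance $n_{j+1}-n_j$ between consecutive arc starting points exceeds $d_j$; when it does not, the paper salvages the argument by splitting into odd- and even-indexed arcs, which is what inflates $\beta_k$ by a factor $3/2$.

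Third, the minor-arc estimate is not ``$O(1)$ per block of length $(\delta/\lambda_k)^{1/k}$.'' That would give $\ll N(\lambda_k/\delta)^{1/k}$, which is not the main term. The correct mechanism (Lemma \ref{lem9}) is that $k+1$ points of $\mathcal{S}(f,N,\delta)$ which do \emph{not} lie on a common degree-$<k$ algebraic curve must be spread over a distance $>\min\bigl((c_k\lambda_k)^{-\frac{2}{k(k+1)}},\,2^{-1}\delta^{-\frac{2}{k(k-1)}}\bigr)$; this follows by comparing the integer divided difference $A_k/D_k$ with $f^{(k)}(t)/k!$ and using $|A_k|\geq 1$. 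Then looking at blocks of $k^2+1$ consecutive points of $T_0$ (none of which lie in a single major arc, so some $k+1$ of them avoid every degree-$<k$ curve) gives $|T_0|\ll k^2\bigl(N(c_k\lambda_k)^{\frac{2}{k(k+1)}}+N\delta^{\frac{2}{k(k-1)}}+1\bigr)$, which produces both the main term and a share of the secondary term. Without this spacing lemma your sketch has no route to the exponent $2/(k(k+1))$.
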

Using Vinogradov notation, $f(x)\ll g(x)$ is equivalent to $f(x)=O(g(x)$, also the implied constant above depends only on $k$ and $c_k$.
We will have an in depth look on how we can use this method, if there exists any weaker version of it and could we improve it. At the end of this section, we will try and prove Theorem \ref{thm1}.
\subsection{Lagrange Interpolation Polynomial} 
First, we will introduce an interpolation method. That is, for a finite set of data points, we will try to construct a function which must go exactly through the values at these data points. The reason why this method will be useful is, for the following sections, we will try to bound the number of certain integer points around a smooth curve. To avoid a trivial bound we could always assume there are more than $k$ such points. By taking multiple intervals which each contain $k$ points, we could reduce the question to bounding the length of the interval by using the classical mean value theorem or a generalization of the mean value theorem which involves using Lagrange polynomials. This would be a common trick to give a bound through out the essay.\\
\linebreak
Here is a simple construction of such a polynomial. Let's say it interpolates at the points $x_0,x_1,x_2,x_3,\dots,x_k$ of some function $f$. Consider a sequence of functions such that each function equals 0 at all the points except at one point it equals $f(x_i)$. By adding these functions together, we get a polynomial passing through exactly these $k+1$ points. Following some straightforward computations we get
\begin{align*}
f_i(x) = \begin{cases} 1, & x=x_i \\ 0, & x=x_j, i\neq j\end{cases}&\Rightarrow
f_i(x)=\prod_{j=0,i\neq j}^{k}\frac{x-x_j}{x_i-x_j}f(x_i)\\
&\Rightarrow \mathcal{P}(x)=\sum_{i=0}^
{k}  \Big( \prod_{j=0,i\neq j}^{k}\frac{x-x_j}{x_i-x_j}f(x_i) \Big). \label{2}\tag{2}
\end{align*}
The leading coefficient of $\mathcal{P}(x)$, $b_k$, is called the divided difference of $f$ at the points $x_0,x_1,x_2,x_3,\dots,x_k$ and it is denoted by $f[x_0,x_1,\dots,x_k]$. With some brute force calculation, we can show that
\begin{align*} 
b_k=\sum_{j=0}^{k} \frac{f(x_j)}{\prod_{0\leq i\leq k,i\neq j}(x_j-x_i)}=\frac{A}{\prod_{0\leq i<j\leq k}(x_j-x_i)} \tag{3},\label{3}
\end{align*}
with
\begin{align*} \label{4}\tag{4}
A=\begin{vmatrix}
1 & 1 & \cdots & 1\\
x_0 & x_1 & \cdots &x_k\\
\vdots & \vdots & \ddots & \vdots\\
x_0^{k-1} & x_1^{k-1} & \cdots & x_k^{k-1}\\
f(x_0) & f(x_1) & \cdots & f(x_k)
\end{vmatrix}
\begin{matrix} \vphantom{x}\\ \vphantom{y} \\\vphantom{z}\\\vphantom{s}\\ \vphantom{f}.\end{matrix}
\end{align*}

It might seem like the degree of the Lagrange interpolating polynomial is directly related to the number of data points it is fitting, specifically, if you have $n$ data points, the Lagrange interpolating polynomial should be of degree $n-1$. But this is simply not the case, we will give a somewhat trivial example here which both showcase the power of this new tool and to prove the point we are making here. Let $f(x)=x$ and let the points we will try to interpolate be $x_i=i$ for $i\in \lbrace1,2,3,4 \rbrace$. Following the way we construct the polynomial (\ref{2}), we get 
\begin{align*}
&\frac{(x-2)(x-3)(x-4)}{-6}+\frac{(x-1)(x-3)(x-4)}{-2}2+\frac{(x-1)(x-2)(x-4)}{-2}3\\&+\frac{(x-1)(x-2)(x-3)}{6}4=x.
\end{align*}
To amend our statement, we could say if you have $n$ data points, the Lagrange interpolating polynomial should be of degree $\leq n-1$. The next thing we will do is to justify that Lagrange interpolation polynomial is well defined by proving its existence and uniqueness \citep[p. 271--287]{lagrange1882oeuvres}. 
\begin{defn}
$\mathbb{P}_n[x]$ is a set of all polynomials of degree $n$ and smaller.
\end{defn}
\begin{thm}[Existence and uniqueness]\label{thm9}
Given $n+1$ distinct points $(x_i)_{i=0}^{n}\in [a,b]$ and $n+1$ real numbers $(f_i)_{i=0}^{n}$, there is exactly one polynomial $p\in \mathbb{P}_n[x]$, namely that given by (\ref{2}), such that $p(x_i)=f_i$ for all $i$.
\end{thm}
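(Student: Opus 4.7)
The plan is to split the statement into its existence and uniqueness components and dispatch each by a short argument. Existence is essentially contained in the construction leading up to (\ref{2}), so I would only need to verify cleanly that the explicit formula really does the job, and uniqueness will follow from a one-line degree argument applied to the difference of two candidate polynomials.

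For existence, I would take the polynomial $\mathcal{P}(x)=\sum_{i=0}^{n}\prod_{j\neq i}\frac{x-x_j}{x_i-x_j}\,f_i$ from (\ref{2}) and observe that each of the $n+1$ summands is a polynomial of degree $n$ in $x$, so $\mathcal{P}\in\mathbb{P}_n[x]$. To check that $\mathcal{P}(x_m)=f_m$ for each index $m$, I would evaluate at $x=x_m$: in the summand with $i=m$ every factor $(x_m-x_j)/(x_m-x_j)$ equals $1$ (well-defined since the $x_i$ are distinct), giving the contribution $f_m$; in every summand with $i\neq m$ the product contains the factor $(x_m-x_m)/(x_i-x_m)=0$ and the whole summand vanishes. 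This yields the required interpolation identity.

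For uniqueness, I would suppose that $p,q\in\mathbb{P}_n[x]$ both satisfy $p(x_i)=q(x_i)=f_i$ for every $i$, set $r=p-q$, and note that $r\in\mathbb{P}_n[x]$ has at least $n+1$ distinct roots $x_0,\ldots,x_n$. Since a non-zero polynomial of degree at most $n$ has at most $n$ roots, $r$ must be identically zero, hence $p=q$. There is no real obstacle here: the whole argument is essentially an unpacking of the notation in (\ref{2}) together with the classical root-counting bound for polynomials. The only subtlety worth flagging is the distinctness assumption on the $x_i$, which is used both to make the denominators in (\ref{2}) non-zero so that the formula makes sense, and to supply the $n+1$ distinct roots needed to kill the difference polynomial in the uniqueness step.
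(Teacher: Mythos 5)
Your proof is correct and follows essentially the same route as the paper: existence is verified by evaluating the Lagrange formula (\ref{2}) at each node, and uniqueness follows by observing that the difference of two interpolants in $\mathbb{P}_n[x]$ has $n+1$ distinct roots and must therefore vanish. The paper simply names the products $l_k(x)$ as Lagrange cardinal polynomials before evaluating, but the underlying argument is identical.
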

\begin{proof}
First we define the Lagrange cardinal polynomials for the points $x_0,x_1,\dots,x_n$ as follows
\begin{align*}
l_k(x)=\prod_{i=0,i\neq k}^{n}\frac{x-x_i}{x_k-x_i},\quad k=0,1,\dots,n.
\end{align*}
Each $l_k$ is the product of $n$ linear factors, hence $l_k\in \mathbb{P}_n[x]$, and from (\ref{2}), $p\in \mathbb{P}_n[x]$. We have shown by construction before such $l_k(x_k)=1$ and $l_k(x_j)=0$ for $j\neq k$. Hence
\begin{align*}
p(x_j)=\sum_{k=0}^{n} f_kl_k(x_j)=f_j, \quad j=0,\dots,n,
\end{align*}
thus $p$ is a polynomial interpolate at our given data points.
For the uniqueness part, let's suppose both $p\in \mathbb{P}_n[x]$ and $q\in \mathbb{P}_n[x]$ interpolate same $n+1$ data points. The polynomial $r=p-q$ is of degree at most n and equals to 0 at $n+1$ distinct points. Such a polynomial can only be the zero polynomial, therefore $r\equiv 0$ and thus the interpolating polynomial is unique.
\end{proof}
Notice that the result for uniqueness is only in one direction, for $n+1$ distinct points, you will get an unique Lagrange polynomial associated to it. But for a given Lagrange polynomial, it could interpolate a different set of points at the same time. The example we discussed before is a good illustration of this. \\
\linebreak
The Lagrange interpolation polynomials are often the appropriate forms to use when we wish to manipulate the interpolation polynomial as part of a larger mathematical expression. However, they are not ideal for numerical evaluation, both because of the speed of calculation and because of the accumulation of rounding error. But it is good enough to give us a practical bound in the following theorems.

\subsection{The First, Second and $k$th Derivative Test} 
The initial aim of theorem \ref{thm1} was to improve a restriction on the $k$th derivative test, which we will state more clearly later. The $k$th derivative test was intended to generalize the first and the second derivative test. We will go through them briefly. First we will state and prove the first derivative test as follows \citep[p. 364--366 Theorem 5.3]{huxley}.
\begin{defn}
$[x]$ is the nearest integer below $x$ such that $x-1<[x]\leq x$.
\end{defn}
\begin{thm}[First derivative test]\label{thm2}
Let $f\in C^1[N,2N]$ such that there exists $\lambda_1>0$ and $c_1\geq 1$ such that for all $x\in [N,2N]$, we have
\begin{align*}
\lambda_1\leq |f'(x)|\leq c_1\lambda_1, \label{5}\tag{5}
\end{align*}
then
\begin{align*}
\mathcal{R}(f,N,\delta)\leq 2c_1N\lambda_1+4c_1N\delta+\frac{2\delta}{\lambda_1}+1.
\end{align*}
In practice, using Titchmarsh-Vinogradov notation, the result can be stated as follows. If
\begin{align*}
|f'(x)|\asymp \lambda_1,
\end{align*}
then we have
\begin{align*}
\mathcal{R}(f,N,\delta)\ll N\lambda_1+N\delta+\frac{\delta}{\lambda}+1.
\end{align*}
$f\asymp g$ is equivalent to $f\ll g$ and $g\ll f$.
\end{thm}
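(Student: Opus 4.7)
Since $|f'(x)| \geq \lambda_1 > 0$ throughout $[N,2N]$, $f$ is strictly monotone, and since replacing $f$ by $-f$ leaves $\mathcal{R}(f,N,\delta)$ unchanged I would assume $f$ strictly increasing. The plan is to count pairs $(n,y)$ with $n \in \mathcal{S}(f,N,\delta)$ and $y \in \mathbb{Z}$ the integer nearest to $f(n)$, by bounding separately the number of distinct $y$ that can occur and, for each such $y$, the number of $n$ mapping to it.

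For the first count, the mean value theorem applied at $N$ and $2N$ shows that $f([N,2N])$ is an interval of length at most $c_1 \lambda_1 N$, so the integers lying within distance $\delta$ of it number at most $c_1 \lambda_1 N + 2\delta + 1$. For the second count, fix such a $y$ and observe that by monotonicity the preimage $\{x \in [N,2N] : |f(x)-y|<\delta\}$ is an interval; a second application of the mean value theorem together with $|f'| \geq \lambda_1$ gives that its length is at most $2\delta/\lambda_1$, hence it contains at most $2\delta/\lambda_1 + 1$ integers. Multiplying the two estimates produces
\[\mathcal{R}(f,N,\delta) \leq \bigl(c_1\lambda_1 N + 2\delta + 1\bigr)\bigl(2\delta/\lambda_1 + 1\bigr).\]

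The final step is to expand this product and reassemble it into the four summands of the stated bound, using $\delta < \tfrac14$, $c_1 \geq 1$, and $N \geq 4$ to absorb cross terms such as $4\delta^2/\lambda_1$ and $2\delta$ into the dominant terms. The slight slack in the factors $2c_1 N \lambda_1$ and $4 c_1 N \delta$ appearing in the theorem is precisely what is needed for this absorption, possibly after separately handling the degenerate regime $c_1 \lambda_1 N < 1$ (in which the range of $f$ contains at most one or two integers, so that the second estimate alone suffices). I expect this bookkeeping, rather than anything geometric, to be the main obstacle; the Titchmarsh-Vinogradov form is then an immediate reading of the four summands.
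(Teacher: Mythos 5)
Your proposal is correct, but it takes a genuinely different route from the paper. The paper subdivides the \emph{domain} $[N,2N]$ into roughly $2c_1\lambda_1 N$ abutting intervals of length $a_1 = 1/(2c_1\lambda_1)$, and uses a dichotomy derived from the mean value theorem (any two points $n,n+a$ of $\mathcal{S}$ satisfy $a > a_1$ or $a < a_2 := 2\delta/\lambda_1$) to conclude each sub-interval holds at most $a_2 + 1$ points; the resulting product $(2c_1\lambda_1 N + 1)(2\delta/\lambda_1 + 1)$ expands \emph{identically} to the four-term bound, so no absorption of cross terms is needed, although the paper disposes first of the cases $4c_1\delta \ge 1$ and $2c_1\lambda_1 \ge 1$ to make the subdivision work. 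You instead partition $\mathcal{S}(f,N,\delta)$ by the target integer $y$, exploiting strict monotonicity explicitly (the paper never does), and arrive at the slightly weaker intermediate bound $(c_1\lambda_1 N + 2\delta + 1)(2\delta/\lambda_1 + 1)$, whose cross terms $4\delta^2/\lambda_1$ and $2\delta$ must then be absorbed into the slack. Your flagged split is in fact the right one and does close: when $c_1\lambda_1 N \ge 1$, one has $c_1 N\lambda_1 > 2\delta$ and $c_1 N > 2\delta/\lambda_1$ (so $2c_1N\delta > 4\delta^2/\lambda_1$), giving the absorption; when $c_1\lambda_1 N < 1$, the $\delta$-thickened range has length $< 1 + 2\delta < 3/2$ and so meets at most two integers, and if it meets two then $c_1\lambda_1 N > 1 - 2\delta > 1/2$, whence $2c_1N\lambda_1 > 1$ and $2\delta/\lambda_1 < 4c_1N\delta$, so $2(2\delta/\lambda_1 + 1)$ is again below the stated bound. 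The trade-off is that the paper's domain-subdivision mechanism is what feeds directly into the reduction principle (Lemma~\ref{lem1}) and hence the higher derivative tests, whereas your monotonicity-based partition of the range is cleaner here but does not generalize in the same way.
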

The result above follows naturally from the classical mean-value theorem. It's rather restrictive and becomes useless when $\lambda_1$ is big, but it is a starting point of more elaborate estimates. 

Also the statement itself could be a bit more general, since the proof only invokes the property of $f$ being continuous and differentiable, the derivative $f'$ is not necessarily needed to be continuous.

\begin{proof}
First we would like to get rid of some trivial cases. If $4c_1\delta\geq 1$, then $4c_1\delta+1\geq N+1\geq \mathcal{R}(f,N,\delta)$, similar thing can be said for $2c_1\lambda_1\geq 1$. So we may as well suppose that max$(4c_1\delta,2c_1\lambda_1)<1$. 
We can also assume that there is more than one point in the set otherwise the inequality is trivial. Let $n$ and $n+a$ be any integers in $\mathcal{S}(f,N,\delta)$ (not necessarily consecutive), using the mean value theorem we could show that either
\begin{align*}
a>\frac{1}{2c_1\lambda_1}=a_1 \label{6}\tag{6};
\end{align*}
or
\begin{align*}
a<\frac{2\delta}{\lambda_1}=a_2 \label{7}\tag{7};
\end{align*}
we assume the above is true for now and continue, will come back to that later. Also notice that max($4c_1\delta$,$2c_1\lambda_1$)$<1$ implies 
\begin{align*}
a_1=\frac{1}{2c_1\lambda_1}>max(\frac{2\delta}{\lambda_1},1)
=max(a_2,1). 
\end{align*}
More specifically, $a_1>1$, therefore we can sub-divide the interval $[N,2N]$ into $s=[\frac{N}{a_1}]+1$ sub-intervals $\mathcal{I}_1,\dots ,\mathcal{I}_s$ where each interval has length $\leq a_1$. Which means if there is more than one element from $\mathcal{S}(f,N,\delta)$ inside the interval $\mathcal{I}_j$, we would have a distance between any of the two elements $\leq a_1$ (not necessarily $2$ consecutive elements) and by (\ref{6}) and (\ref{7}), we have their distance $\leq a_2$. For any two elements inside the interval, choose the pair with the largest distance, we would still have the previous result. Let $n$ and $n+a$ be such a pair with $a\leq a_2$. Then if there are more elements lie in the interval, they must lie in between $n$ and $n+a$. Thus, we could infer the following to be true,
\begin{align*}
|\mathcal{S}(f,N,\delta)\cap\mathcal{I}_j|\leq a_2+1,
\end{align*}
which still holds if there is only one element inside the interval. From here we could easily deduce the result.
\begin{align*}
\mathcal{R}(f,N,\delta)\leq\Big(\frac{N}{a_1}+1\Big)(a_2+1)=2c_1N\lambda_1+4c_1N\delta+\frac{2\delta}{\lambda_1}+1.
\end{align*}
Now we will try to end the argument by deducing (\ref{7}) and (\ref{6}) to be true by using the mean value theorem. For any two elements $n$ and $n+a$ in $\mathcal{S}(f,N,\delta)$, there exist $m_1$ and $m_2$ integers and $\delta_1$ and $\delta_2$ real numbers such that $f(n)=m_1+\delta_1$ and $f(n+a)=m_2+\delta_2$ with $|\delta_i|<\delta$ for $i \in \lbrace1,2\rbrace$. Which means there exists $m\in \mathbb{Z}$ and $\delta_3\in \mathbb{R}$ such that $|\delta_3|<2\delta$ and
\begin{align*}
f(n+a)-f(n)=m+\delta_3.
\end{align*}
Using the mean value theorem on $f$, we get for $t\in[N,2N]$ the following
\begin{align*}
f(n+a)-f(n)=af'(t) \quad\text{and}\quad af'(t)=m+\delta_3.
\end{align*}
If $m\neq 0$, since $m\in \mathbb{Z}$, we have $|m|\geq 1$. From (\ref{5}) we could deduce
\begin{align*}
ac_1\lambda_1\geq a|f'(t)|=|m+\delta_3|\geq |m|-|\delta_3|>1-\frac{1}{2}=\frac{1}{2},
\end{align*} 
which gives us (\ref{6}). If $m=0$, by (\ref{5}) we have
\begin{align*}
a\lambda_1\leq a|f'(t)|=|\delta_3|<2\delta,
\end{align*}
which gives (\ref{7}).
\end{proof}
For the next result, we will assume $0<\delta<\frac{1}{8}$. First we will introduce the following lemma. \citep[p. 383--385 Lemma 5.7]{huxley}
\begin{lem}[Reduction principle]\label{lem1} Let $f:[N,2N]\longrightarrow \mathbb{R}$ be any map, $A$ be a real number satisfying $1\leq A\leq N$ and, for all integers $a\in [1,A]$, we define on $[N,2N-a]$ the function $\Delta_af$ by
\begin{align*}
\Delta_af(x)=f(x+a)-f(x).
\end{align*}
Then
\begin{align*}
\mathcal{R}(f,N,\delta)\leq \frac{N}{A}+\sum_{a\leq A}\mathcal{R}(\Delta_af,N,2\delta)+1.
\end{align*}
\end{lem}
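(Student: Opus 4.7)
The plan is to partition $[N,2N]$ into roughly $N/A$ blocks of length at most $A$, designate a base point in each block, and show that every other element of $\mathcal{S}(f,N,\delta)$ in that block is detected by some shifted function $\Delta_a f$ with $a\leq A$.

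First I would divide $[N,2N]$ into $s=\lceil N/A\rceil\leq N/A+1$ consecutive subintervals $I_1,\ldots,I_s$, each of length at most $A$. In each subinterval $I_j$ that meets $\mathcal{S}(f,N,\delta)$, let $n_j$ denote the smallest element of $\mathcal{S}(f,N,\delta)\cap I_j$; every other element of this intersection is then of the form $n_j+a$ for some integer $a\in[1,\lfloor A\rfloor]$. The collection of base points contributes at most $s\leq N/A+1$ to $\mathcal{R}(f,N,\delta)$, so it suffices to bound the number of non-base elements by $\sum_{a\leq A}\mathcal{R}(\Delta_a f,N,2\delta)$.

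The crucial observation is the triangle inequality $\|x+y\|\leq\|x\|+\|y\|$ for the distance-to-nearest-integer function: whenever $m$ and $m+a$ both lie in $\mathcal{S}(f,N,\delta)$,
$$\|\Delta_a f(m)\|=\|f(m+a)-f(m)\|\leq \|f(m+a)\|+\|f(m)\|<2\delta,$$
so $m\in\mathcal{S}(\Delta_a f,N,2\delta)$. Applied with $m=n_j$, each non-base element $n_j+a$ of $\mathcal{S}(f,N,\delta)\cap I_j$ produces a witness $n_j\in\mathcal{S}(\Delta_a f,N,2\delta)$ with $a\in[1,\lfloor A\rfloor]$. The assignment of a non-base element $n_j+a$ to the pair $(a,n_j)$ is injective, since the element is recovered as $n_j+a$, so the total count of non-base elements across all subintervals is at most $\sum_{a\leq A}\mathcal{R}(\Delta_a f,N,2\delta)$, which, combined with the base-point bound, yields the claimed inequality. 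The only substantive step is the triangle inequality for $\|\cdot\|$; the rest is a clean partition-and-count, with no real obstacle.
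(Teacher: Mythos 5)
Your proof is correct, but it takes a genuinely different combinatorial route from the one in the paper, so a comparison is worthwhile. The paper's proof decomposes $\mathcal{S}(f,N,\delta)$ by gaps between \emph{consecutive} elements, introducing the sets $\mathcal{S}(a)=\{n : n\text{ and }n+a\text{ consecutive in }\mathcal{S}(f,N,\delta)\}$, so that $\mathcal{R}(f,N,\delta)=\sum_{a\geq1}|\mathcal{S}(a)|+1$; the $N/A$ term then comes from an averaging argument on gap lengths, namely $\sum_a a|\mathcal{S}(a)|=n_k-n_1\leq N$, which bounds the tail $\sum_{a>A}|\mathcal{S}(a)|\leq N/A$, while for $a\leq A$ the triangle inequality gives $|\mathcal{S}(a)|\leq\mathcal{R}(\Delta_af,N,2\delta)$. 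You instead fix a block partition of $[N,2N]$ into $\lceil N/A\rceil$ intervals, anchor one base point in each, and absorb the base points into the $N/A+1$ term; the remaining points are handled by the same triangle inequality applied to $(n_j,n_j+a)$ rather than to consecutive pairs. Both are valid and give the identical bound. Your block-partition argument is more in the spirit of the paper's proof of the first derivative test (Theorem \ref{thm2}), and has the minor virtue of avoiding the notion of consecutive elements entirely; the paper's gap-decomposition argument is slightly more adaptive, since it only ever references pairs that actually occur as consecutive elements and records exactly the gap sizes that appear, which can sometimes be exploited further. One small point to tighten in a written-up version: you should make explicit that the blocks are pairwise disjoint (e.g.\ half-open), so that for each fixed $a$ the base points $n_j$ arising from distinct blocks are distinct, which is what makes $|T_a|\leq\mathcal{R}(\Delta_af,N,2\delta)$ legitimate; as it stands you only say the assignment to pairs $(a,n_j)$ is injective, which is the right idea but leaves that last step implicit.
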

\begin{proof}
For all $a\in\mathbb{N}$, define
\begin{align*}
S(a)=\lbrace n\in [N,2N]\cap \mathbb{Z}:n \text{ and }n+a \text{ are consecutive in }\mathcal{S}(f,N,\delta)\rbrace.
\end{align*}
As before, assume more than one point in the set $\mathcal{S}(f,N,\delta)$, otherwise the statement is trivial. We observe that for any integer inside  $\mathcal{S}(f,N,\delta)$, except the largest one, has a successive element and lies in only one subset $\mathcal{S}(a)$, so we could easily deduce the following,
\begin{align*}
\mathcal{R}(f,N,\delta)=\sum_{a=1}^{\infty}|\mathcal{S}|+1=\sum_{a\leq A}|\mathcal{S}(a)|+\sum_{a> A}|\mathcal{S}(a)|+1. \tag{8}\label{8}
\end{align*}
We write the set as $\mathcal{S}(f,N,\delta)=\lbrace n_1\leq n_2\leq \cdots\leq n_k\rbrace$ and define $d_j$ for $j\in \lbrace 1,\dots,k-1 \rbrace$ as
\begin{align*}
d_1=n_2-n_1 \text{, } d_2=n_3-n_2 \text{,\dots,}d_{k-1}=n_k-n_{k-1}.
\end{align*}
By definition we have $|\mathcal{S}(a)|$ counts the number of elements with distance a, that is the number of indexes $j\in \lbrace 1,\dots,k-1\rbrace$ such that $d_j=a$, so clearly
\begin{align*}
\sum_{a=1}^{\infty}a|\mathcal{S}(a)|=\sum_{j=1}^{k-1}d_j=\sum_{j=1}^{k-1}(n_{j+1}-n_j)=n_k-n_1\leq N.
\end{align*}
Therefore we can trivially bound the second term of (\ref{8}) by the following,
\begin{align*}
N\geq \sum_{a=1}^{\infty}a|\mathcal{S}(a)|\geq \sum_{a>A}a|\mathcal{S}(a)|\geq A\sum_{a>A}|\mathcal{S}(a)| \Rightarrow \sum_{a>A}|\mathcal{S}(a)|\leq \frac{N}{A}.
\end{align*}
Let $n\in \mathcal{S}(a)$, which means $n$ and $n+a$ are consecutive in $\mathcal{S}(f,N,\delta)$, so that
\begin{align*}
||\Delta_af(n)||= ||f(n+a)-f(n)||\leq ||f(n+a)||+||f(n)||<2\delta.
\end{align*}
To justify the first inequality above to be true, simply take\\ $|| (f(n+a) \text{ mod } 1)-(f(n) \text{ mod }1)||$, it gives the same value as $||f(n+a)-f(n)||$. So we have $n\in \mathcal{S}(\Delta_af,N,2\delta)$, that is
\begin{align*}
|\mathcal{S}(a)|\leq \mathcal{R}(\Delta_af,N,2\delta),
\end{align*}
which gives a bound for first term of (\ref{8}) and the result.
\end{proof}
For the second derivative test, the idea is to use the mean value theorem to get $| (\Delta_af)'(x)|\asymp a\lambda_2$, use the first derivative test on $\Delta_af$ and use the previous lemma to get back to $f$. This gives the following theorem \citep[p. 385--386 Theorem 5.4]{huxley}.
\begin{thm}[Second derivative test]\label{thm3} 
Let $f\in C^2[N,2N]$ such that there exist $\lambda_2>0$ and $c_2\geq 1$ such that, for all $x\in [N,2N]$, we have
\begin{align*}
\lambda_2\leq|f''(x)|\leq c_2\lambda_2, \tag{9}\label{9}
\end{align*}
and 
\begin{align*}
N\lambda_2\geq c_2^{-1} \tag{10}. \label{10}
\end{align*}
Then
\begin{align*}
\mathcal{R}(f,N,\delta)\leq 6\lbrace (3c_2)^\frac{1}{3}N\lambda_2^\frac{1}{3}+(12c_2)^\frac{1}{2}N\delta^\frac{1}{2}+1\rbrace, \label{11} \tag{11}
\end{align*}
or to put it in a more practical way, if
\begin{align*}
|f''(x)|\asymp \lambda_2 \text{ and } N\lambda_2 \gg 1,
\end{align*}
then
\begin{align*}
\mathcal{R}(f,N,\delta)\ll N\lambda_2^\frac{1}{3}+N\delta^\frac{1}{2}+1.
\end{align*}
\end{thm}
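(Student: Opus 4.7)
The proof follows the strategy outlined in the paragraph preceding the statement: transfer the bound on $|f''|$ to one on $|(\Delta_a f)'|$ by the mean value theorem, apply Theorem \ref{thm2} to each $\Delta_a f$, and then recover $\mathcal{R}(f,N,\delta)$ via the reduction principle of Lemma \ref{lem1}, before optimizing over the reduction parameter $A$.

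Concretely, for each integer $a \in [1,A]$ (with $A$ to be chosen) and each $x \in [N, 2N-a]$, the MVT applied to $f'$ on $[x, x+a]$ furnishes a point $t$ with $(\Delta_a f)'(x) = f'(x+a)-f'(x) = a f''(t)$, so by (\ref{9}) the function $\Delta_a f$ satisfies the hypothesis of Theorem \ref{thm2} with $\lambda_1 = a\lambda_2$ and $c_1 = c_2$. Invoking that theorem with tolerance $2\delta$, as called for by Lemma \ref{lem1}, yields
\[
\mathcal{R}(\Delta_a f, N, 2\delta) \leq 2 c_2 N a \lambda_2 + 8 c_2 N \delta + \frac{4\delta}{a \lambda_2} + 1,
\]
and feeding this into Lemma \ref{lem1} and summing over $a \leq A$ gives
\[
\mathcal{R}(f, N, \delta) \leq \frac{N}{A} + c_2 N \lambda_2 A(A+1) + 8 c_2 N \delta A + \frac{4\delta}{\lambda_2}(1 + \log A) + A + 1.
\]

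The main obstacle is choosing $A$ so that this expression matches (\ref{11}) with the correct constants. The two dominant pairings are $\tfrac{N}{A}$ versus $c_2 N \lambda_2 A^2$, which balance at $A \asymp (c_2\lambda_2)^{-1/3}$ and produce the $(3c_2)^{1/3} N\lambda_2^{1/3}$ term, and $\tfrac{N}{A}$ versus $8 c_2 N \delta A$, which balance at $A \asymp (c_2\delta)^{-1/2}$ and produce the $(12c_2)^{1/2} N\delta^{1/2}$ term. Since both optima cannot be realized simultaneously, the standard device is to take $A$ to be the maximum of the two candidate values; whichever regime dominates, the other terms drop out as subdominant. Edge cases where this $A$ would fall below $1$ or exceed $N$ are handled by the trivial bound $\mathcal{R}(f,N,\delta) \leq N+1$, with the hypothesis (\ref{10}), $N\lambda_2 \geq c_2^{-1}$, ensuring those degenerate regimes are already absorbed into the $O(N\lambda_2^{1/3})$ term. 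The parasitic contribution $\tfrac{4\delta}{\lambda_2}(1+\log A)$, which has no counterpart in the stated bound, is the most delicate piece: it must be absorbed by the principal terms via crude estimates ($\log A$ against a small power of $A$) together with (\ref{10}) to convert $1/\lambda_2$ into a factor of $N$. Tracking the constants through this final balance then produces the outer factor of $6$ together with $(3c_2)^{1/3}$ and $(12c_2)^{1/2}$.
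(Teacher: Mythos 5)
Your strategy is exactly the paper's: mean value theorem to transfer the second-derivative bound to $\Delta_a f$, the first derivative test with tolerance $2\delta$, the reduction principle, and an optimization over $A$. But you sum over $a$ \emph{before} using hypothesis (\ref{10}), which manufactures the awkward $\tfrac{4\delta}{\lambda_2}(1+\log A)$ term that you then flag as the ``most delicate piece.'' The paper avoids this entirely by applying (\ref{10}) \emph{inside} the sum: for each fixed $a \geq 1$, condition (\ref{10}) gives $\tfrac{4\delta}{a\lambda_2} \leq \tfrac{4\delta}{\lambda_2} \leq 4c_2 N\delta$ and likewise $1 \leq c_2 N a\lambda_2$, so the summand $2c_2Na\lambda_2 + 8c_2N\delta + \tfrac{4\delta}{a\lambda_2} + 1$ is replaced term-by-term by $3c_2Na\lambda_2 + 12c_2N\delta$. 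The sum then collapses to the clean three-term expression $\tfrac{N}{A} + 3c_2 N\lambda_2 A^2 + 12c_2 NA\delta + 1$ with no logarithm and no stray $+A$. This is a genuinely simpler route to the same place: you trade the sharper $O(\log A)$ harmonic-sum bound for a uniform $O(A)$ bound, but the uniform bound is exactly what you need to merge cleanly with $8c_2N\delta A$ and to keep the constants explicit.

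Your final step is also vaguer than what the claimed constants require. ``Take $A$ to be the maximum of the two candidate values'' produces the right exponents, but tracking the factor of $6$, the $(3c_2)^{1/3}$, and the $(12c_2)^{1/2}$ through that device is exactly the bookkeeping you leave unverified. The paper instead invokes Srinivasan's lemma (Lemma~\ref{lem2}) on the three-term expression, which outputs the constants mechanically, and then disposes of the leftover linear terms $3c_2\lambda_2$ and $12c_2\delta$ using the non-triviality assumptions $\lambda_2 < (3c_2)^{-1}$, $\delta < (12c_2)^{-1}$ (which you should state up front rather than allude to as ``edge cases''). In short: right plan, but reorder the absorption to before the sum and use Lemma~\ref{lem2} for the optimization; otherwise the explicit bound (\ref{11}) is asserted rather than proved.
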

\begin{proof}
As per usual, we first get rid of the trivial cases. If $\lambda_2\geq (3c_2)^{-1}$ or $\delta\geq (12c_2)^{-1}$, then it would result in the right hand side of the inequality (\ref{10}) being $\geq N+1$, which means the statement would be trivially true. So we could assume that
\begin{align*}
0<\lambda_2<(3c_2)^{-1} \text{ and } 0<\delta<(12c_2)^{-1}. \tag{12} \label{12}
\end{align*}
Let $A\in\mathbb{R}$ such that $1\leq A\leq N$. For all $x\in [N,2N]$ and all $a\in[1,A]\cap\mathbb{Z}$ such that $x+a\in [N,2N]$, applying the mean value theorem on them, that
\begin{align*}
(\Delta_af)'(x)=f'(x+a)-f'(x)=af''(t)
\end{align*}
for some $t\in[x,x+a]\subseteq [N,2N]$, with the condition from (\ref{9}), we would have for all $x\in [N,2N]$ and all $a\in [1,A]\cap \mathbb{Z}$ such that $x+a\in[N,2N]$, we could easily find
\begin{align*}
a\lambda_2\leq |(\Delta_af)'(x)|\leq c_2a\lambda_2.
\end{align*}
By the first derivative test we get
\begin{align*}
\mathcal{R}(\Delta_af,N,2\delta)\leq 2c_2Na\lambda_2+8c_2N\delta+\frac{4\delta}{a\lambda_2}+1.\label{13}\tag{13}
\end{align*}
Even though $\Delta_af$ might not be defined on some points in $[N,2N]$, the inequality still holds because the range where $\Delta_af$ is defined is contained in $[N,2N]$. We could simply extend the undefined part to be any smooth function. As we are discussing an upper bound in (\ref{12}), it still works, since by the nature of the argument, we only care about counting the integer points on the defined interval. Now we combine the above with Lemma \ref{lem1}. We get the following,
\begin{align*}
\mathcal{R}(f,N,\delta)\leq \frac{N}{A}+\sum_{a\leq A}\Big( 2c_2Na\lambda_2+8c_2N\delta+\frac{4\delta}{a\lambda_2}+1\Big)+1. \tag{14}\label{14}
\end{align*}
By condition (\ref{10}) and the fact that $a$ is a positive integer, we get
\begin{align*}
4c_2N\delta\geq 4\delta\lambda_2^{-1}\geq 4\delta(a\lambda_2)^{-1} \text{ and } 1\leq c_2Na\lambda_2.
\end{align*}
Replace terms on the right side of the inequality (\ref{14}) using the above two inequalities, we get
\begin{align*}
\mathcal{R}(f,N,\delta)&\leq \frac{N}{A}+\sum_{a\leq A}\Big( 3c_2Na\lambda_2+12c_2N\delta\Big)+1\\
&\leq \frac{N}{A}+3c_2NA^2\lambda_2+12c_2NA\delta+1. \tag{15}\label{15}
\end{align*}
To improve our bound even further, we need to try to optimize the choice of $A$ and see if there exists $A\in[1,N]$ such that the right hand side of the above inequality could be as small as it can. The following lemma \citep{srinivasan1962van} does exactly the thing we want, optimizing the choice of $H$ in a certain interval. An explicit statement of the lemma can also be found in \citep[p. 363 Lemma 5.6]{huxley}.
\begin{lem}[Srinivasan]\label{lem2}
Let
\begin{align*}
E(H)=\sum_{i=1}^{m}A_iH^{a_i}+\sum_{j=1}^{n}B_jH^{-bj},
\end{align*}
where m,n $\in$ $\mathbb{N}$ and $A_i$, $B_j$, $a_i$ and $b_j$ are positive real numbers. Suppose that $0\leq H_1\leq H_2$. Then
\begin{align*}
\min_{H_1\leq H\leq H_2}E(H)\leq (m+n)\Big\lbrace \sum_{i=1}^{m}\sum_{j=1}^{n}(A_i^{b_j}B_j^{a_i})^{\frac{1}{a_i+b_j}}+\sum_{i=1}^{m}A_iH_1^{a_i}+\sum_{j=1}^{n}B_jH_2^{-b_j}\Big\rbrace.
\end{align*}
\end{lem}
Now we let $n=1,m=2,b_1=1,a_1=1,a_2=2,B_1=N,A_1=12c_2N\delta,A_2=3c_2N\lambda_2,H=A$
\begin{align*}
\min_{1\leq A\leq N} \Big(& NA^{-1}+3c_2N\lambda_2A^2+12c_2NA\delta\Big) \\
&\leq 3\Big\lbrace (A_1N^{a_1})^{\frac{1}{a_1+1}}+(A_2N^{a_2})^{\frac{1}{a_2+1}}  +A_1H_1^{a_1}+A_2H_1^{a_2}+B_1H_2^{-b_1}\Big\rbrace\\
&=3N\Big\lbrace (3c_2\lambda_2)^{\frac{1}{3}} +  3c_2\lambda_2  +(12c_2\delta)^{\frac{1}{2}}+12c_2\delta \Big\rbrace +3. \tag{16}\label{16}
\end{align*}
Now (\ref{12}) implies that
\begin{align*}
0<3c_2\lambda_2<1 \text{ and } 0<12c_2\delta<1,
\end{align*}
which means $(3c_2\lambda_2)^\frac{1}{3}>3c_2\lambda_2$ and $(12c_2\delta)^\frac{1}{2}>12c_2\delta$, with all that said, bound in (\ref{15}) can be optimized as the following,
\begin{align*}
\mathcal{R}(f,N,\delta) &\leq 3N\Big\lbrace (3c_2\lambda_2)^{\frac{1}{3}} +  3c_2\lambda_2  +(12c_2\delta)^{\frac{1}{2}}+12c_2\delta \Big\rbrace +4\\
&\leq 6\Big\lbrace(3c_2\lambda_2)^\frac{1}{3}N+(12c_2\delta)^\frac{1}{2}N+1\Big\rbrace,
\end{align*}
which is the result.
\end{proof}
The natural question to ask now is if the previous two results could be generalized in some way. In order to do so, we should first consider a more generalized version of the mean value theorem which put higher derivatives into play. It is stated as follows \citep[p.380--381 Theorem 5.2]{huxley}.
\begin{thm}[Divided differences]\label{thm4}
Let $k$ be a positive integer, $x_0<x_1<\cdots<x_k$ be real numbers and $f\in C^k[x_0,x_k]$. Set $\mathcal{P}(x)=b_kx^k+\cdots+b_0$ the unique polynomial of degree $\leq k$ such that $\mathcal{P}(x_i)=f(x_i)$ for $i=0,\dots,k$. Then there exists a real number $t\in [x_0,x_k]$ such that
\begin{align*}
b_k=\frac{f^{(k)}(t)}{k!}.
\end{align*}
\end{thm}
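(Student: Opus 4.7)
The approach is the classical one: apply Rolle's theorem iteratively to the error function $g(x) := f(x) - \mathcal{P}(x)$. By hypothesis $f \in C^k[x_0, x_k]$ and $\mathcal{P}$ is a polynomial, so $g \in C^k[x_0, x_k]$. Since $\mathcal{P}$ interpolates $f$ at the $k+1$ distinct nodes $x_0 < x_1 < \cdots < x_k$, we have $g(x_i) = 0$ for every $i = 0, 1, \ldots, k$, which gives $g$ at least $k+1$ zeros inside $[x_0, x_k]$.

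First I would apply Rolle's theorem to each of the $k$ consecutive sub-intervals $(x_i, x_{i+1})$ to produce $k$ distinct zeros of $g'$ inside $(x_0, x_k)$. Iterating, at stage $j$ (for $1 \leq j \leq k$) one has $k+1-j$ distinct zeros of $g^{(j)}$ inside $(x_0, x_k)$; applying Rolle once more between each consecutive pair yields $k-j$ zeros of $g^{(j+1)}$. Proceeding inductively all the way to $j = k$ produces at least one $t \in (x_0, x_k)$ with $g^{(k)}(t) = 0$.

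To conclude, since $\mathcal{P}(x) = b_k x^k + b_{k-1} x^{k-1} + \cdots + b_0$ has degree at most $k$, its $k$-th derivative is the constant $k!\,b_k$. Therefore $g^{(k)}(t) = 0$ reads $f^{(k)}(t) - k!\,b_k = 0$, i.e.\ $b_k = f^{(k)}(t)/k!$, which is precisely the claim.

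There is essentially no substantive obstacle in this argument. The only care required is the inductive bookkeeping that ensures at every stage the zeros of $g^{(j)}$ remain inside $(x_0, x_k)$, so that Rolle's theorem continues to apply; this is automatic because each newly produced zero lies strictly between two earlier zeros, all of which sit in $[x_0, x_k]$. A minor remark is that the hypothesis $f \in C^k$, as opposed to $k$-times differentiable, is more than is strictly needed, since the Rolle argument only requires differentiability of $g^{(j)}$ for $j = 0, \ldots, k-1$ and the existence of $g^{(k)}$ at the relevant point — but having continuity throughout makes the bookkeeping transparent and matches the regularity already assumed elsewhere in the chapter.
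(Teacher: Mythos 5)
Your proof is correct and matches the paper's approach exactly: the paper likewise sets $F(x)=f(x)-\mathcal{P}(x)$, notes its $k+1$ zeros at the nodes, invokes the iterated-Rolle induction to find $t$ with $F^{(k)}(t)=0$, and finishes via $\mathcal{P}^{(k)}=k!\,b_k$. You have simply written out the Rolle bookkeeping that the paper only sketches.
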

The proof of the theorem is rather straightforward. By letting $F(x)=f(x)-\mathcal{P}(x)$, we can use induction to prove that for any $F(x)\in C^k[x_0,x_k]$ such that $F(x_0)=F(x_1)=\cdots=F(x_k)$. Then there exists a real number $t\in[x_0,x_k]$ such that $F^{(k)}(t)=0$. Notice that $\mathcal{P}^{(k)}(x)=k!b_k$. \\
\linebreak
The polynomial $\mathcal{P}$ mentioned above is a Lagrange polynomial. We can see from (\ref{4}) that $A\in \mathbb{Z}$ if $x_j\in \mathbb{Z}$ and $f(x_j)\in \mathbb{Z}$. When $k=1$, by (\ref{3}), we can find $b_k=\frac{f(x_1)-f(x_0)}{x_1-x_0}$, which shows that Theorem \ref{thm4} generalizes the mean value theorem. With that in mind, now we can go back to the generalization of the first and second derivative tests. The $k$th derivative test \citep[p. 387--372 Proposition 5.1]{huxley} is stated as follows
\begin{thm}[$k$th derivative test]\label{thm5}
Let $k\geq 1$ be an integer and $f\in C^k[N,2N]$ such that there exist $\lambda_k>0$ and $c_k\geq 1$ such that, for all $x\in[N,2N]$, we have
\begin{align*}
\lambda_k\leq |f^{(k)}(x)|\leq c_k\lambda_k. \label{17}\tag{17}
\end{align*}
Assume also that
\begin{align*}
(k+1)!\delta<\lambda_k. \label{18}\tag{18}
\end{align*}
If $\alpha_k=2k(2c_k)^{\frac{2}{k(k+1)}}$, then
\begin{align*}
\mathcal{R}(f,N,\delta)\leq \alpha_kN\lambda_k^{\frac{2}{k(k+1)}}+4k.
\end{align*}
To state the theorem in a more practical sense, we can say if
\begin{align*}
|f^{(k)}(x)|\asymp \lambda_k \text{ and }\delta\ll \lambda_k,
\end{align*}
then
\begin{align*}
\mathcal{R}(f,N,\delta)\ll N\lambda_k^{\frac{2}{k(k+1)}}+1.
\end{align*}
\end{thm}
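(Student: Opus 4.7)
The plan is to combine the divided-differences identity (Theorem~\ref{thm4}) with an integrality argument on the leading coefficient of a related Lagrange polynomial, and then use a covering of $[N,2N]$ by short sub-intervals. After dismissing the trivial cases where the claimed bound already exceeds $N+1$, I would fix a parameter $L$ and partition $[N,2N]$ into $\lceil N/L\rceil$ closed sub-intervals of length $L$. The whole task then reduces to choosing $L$ just small enough that each sub-interval is forced to contain at most $k$ elements of $\mathcal{S}(f,N,\delta)$; summing $k$ over the sub-intervals recovers the main term.

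To establish the per-interval bound, I would argue by contradiction. Suppose some sub-interval contains points $n_0<n_1<\cdots<n_k$ in $\mathcal{S}(f,N,\delta)$, with nearest integers $m_i \in \mathbb{Z}$ satisfying $|f(n_i)-m_i|<\delta$. Theorem~\ref{thm4} applied to these $k+1$ nodes, together with~(\ref{17}), forces the leading coefficient $b_k$ of the Lagrange polynomial through $(n_i,f(n_i))$ to satisfy $|b_k|\geq \lambda_k/k!$. Replacing each $f(n_j)$ by the integer $m_j$ in formula~(\ref{3}) produces a rational number $b_k^* = A_0/D$, where $D=\prod_{i<j}(n_j-n_i)$ is a positive integer and $A_0\in\mathbb{Z}$ by~(\ref{4}) since the entire determinantal matrix has integer entries.

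The crux is then to bound the discrepancy $|b_k-b_k^*|$. Using that the $n_i$ are distinct integers, so $|n_j-n_i|\geq|j-i|$, I would estimate
$$\sum_{j=0}^{k}\frac{1}{\prod_{i\neq j}|n_j-n_i|}\leq\sum_{j=0}^{k}\frac{1}{j!(k-j)!}=\frac{2^k}{k!},$$
giving $|b_k-b_k^*|\leq 2^k\delta/k!$. Hypothesis~(\ref{18}), combined with $(k+1)!\geq 2^k$, forces this to be strictly smaller than $|b_k|$, so $A_0\neq 0$ and hence $|b_k^*|\geq 1/D$. The matching upper estimate $|b_k^*|\leq |b_k|+|b_k-b_k^*|\leq 2c_k\lambda_k/k!$ then yields $D\geq k!/(2c_k\lambda_k)$. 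But the $k(k+1)/2$ factors of $D$ are each at most $L$, so $D\leq L^{k(k+1)/2}$, producing $L\geq(k!/(2c_k\lambda_k))^{2/(k(k+1))}$. Choosing $L$ strictly below this threshold delivers the contradiction.

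Summing $k$ over the $\lceil N/L\rceil$ sub-intervals gives a total of the form $kN/L+O(k)$, and substituting the threshold value of $L$ yields a bound of shape $\alpha_kN\lambda_k^{2/(k(k+1))}+O(k)$, matching the stated estimate up to the explicit constants. The main obstacle, as I see it, is the comparison between $b_k$ and $b_k^*$: the bound $\sum_j 1/\prod_{i\neq j}|n_j-n_i|\leq 2^k/k!$ is what makes the hypothesis $(k+1)!\delta<\lambda_k$ just sufficient, and the integrality of the Vandermonde-like determinant $A_0$ from~(\ref{4}) is precisely what drives the lower bound on $D$. Everything after that is bookkeeping to pin down the explicit constants $\alpha_k$ and the additive $4k$.
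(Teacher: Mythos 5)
Your proposal is correct and follows essentially the same strategy as the paper: dismiss the trivial case, interpolate $k+1$ points of $\mathcal{S}(f,N,\delta)$ by a Lagrange polynomial, compare its leading coefficient $b_k=f^{(k)}(t)/k!$ (via Theorem~\ref{thm4}) with the rational leading coefficient $b_k^*=A_0/D$ of the polynomial through the rounded points $(n_j,m_j)$, use the integrality of $A_0$ from~(\ref{4}) together with the lower bound on $|f^{(k)}|$ to force $D$ to be large, and deduce a spacing bound that is then summed over a covering. The one genuine small improvement you make is in estimating the discrepancy $|b_k-b_k^*|$: you use $|n_j-n_i|\geq|j-i|$ to get $\sum_j 1/\prod_{i\neq j}|n_j-n_i|\leq 2^k/k!$, whereas the paper uses only $|a_j-a_i|\geq 1$ to get the cruder bound $k+1$ (hence $|y|\leq (k+1)!D_k\delta$), which is precisely calibrated so that hypothesis~(\ref{18}) is exactly what is needed; your sharper bound makes~(\ref{18}) more than sufficient (since $2^k\leq(k+1)!$) and in fact would improve the constant $\alpha_k$ by a factor roughly $(k!)^{-2/(k(k+1))}$. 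The other superficial difference is the counting: you tile $[N,2N]$ by $\lceil N/L\rceil$ fixed sub-intervals and show each holds at most $k$ points, while the paper takes $k+1$ consecutive elements of $\mathcal{S}$ directly, shows their span $a_k$ exceeds a threshold $d_k$, and extracts every $(k+1)$th element; both work and give the same order of constant. Neither of these variations changes the core of the argument, so this is the paper's proof up to bookkeeping.
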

\begin{proof}
As usual, we first get rid of the trivial case, if $\lambda_k\geq \frac{1}{2}$, then
\begin{align*}
\alpha_kN\lambda_k^{\frac{2}{k(k+1)}}+1\geq N+1 \geq \mathcal{R}(f,N,\delta).
\end{align*}
Since $k\geq 1$ and $c_k\geq 1$. So we can assume that $\lambda_k<\frac{1}{2}$ and that the size of $\mathcal{S}(f,N,\delta)$ is more than $4k$. The way we try to generalize the first derivative test here is by taking $k+1$ consecutive points $n<n+a_1<n+a_2<\cdots<n+a_k$ in $\mathcal{S}(f,N,\delta)$ and say the distance between the first and the last elements $a_k$ satisfies 
\begin{align*}
a_k\geq 2k\alpha_k^{-1}\lambda_k^{-\frac{2}{k(k+1)}}. \label{19}\tag{19}
\end{align*}
As we did in the proof of the first derivative test, we will assume (\ref{19}) to be true for now and will come back and prove it later. Let's write the set in the form $\mathcal{S}(f,N,\delta)=\lbrace n<n+a_1<\cdots<n+a_k<\cdots<n+a_z\rbrace$ where $z > 4k$. Let $T$ be a subset of $\mathcal{S}(f,N,\delta)$ such that it contains each $(k+1)$th elements of $\mathcal{S}(f,N,\delta)$, that is 
\begin{align*}
T=\lbrace n+a_{mk+(m-1)}\in \mathcal{S}(f,N,\delta): m\in\mathbb{N}\setminus\lbrace0\rbrace  \text{ and }mk+(m-1)\leq z  \rbrace. 
\end{align*}
We can see by (\ref{19}), for any two elements in the set $T$, they differ by more than $d_k=2k\alpha_k^{-1}\lambda_k^{-\frac{2}{k(k+1)}}$ and $k\geq 1$, therefore we can deduce the following, which is essentially the result
\begin{align*}
\mathcal{R}(f,N,\delta)\leq (k+1)(|T|+1)\leq 2k\Big(\frac{N}{d_k}+2\Big).
\end{align*}
Now our job is to show that the inequality (\ref{19}) is indeed true and then we are done. We may abuse the notations a little bit and let $n, n+a_1,\dots,n+a_k$ be any $k+1$ consecutive elements in $S(f,N,\delta)$. By definition, there exist integers $m_0,\dots,m_k$ and real numbers $\delta_0,\dots,\delta_k$ such that $|\delta_j|<\delta$ for all $j\in \lbrace0,\dots,k\rbrace$ and $f(n+a_j)=m_j+\delta_j$ where $a_0=0$. Using Theorem \ref{thm4} and (\ref{3}), let $\mathcal{P}(x)=b_kx^k+\cdots+b_0$ be the Lagrange polynomial interpolating the points $(n+a_j,m_j+\delta_j)$. We know there exist $t\in [n, n+a_k]$ such that 
\begin{align*}
b_k=\sum_{j=0}^{k} \frac{m_j+\delta_j}{\prod_{0\leq i\leq k,i\neq j}(a_j-a_i)}=\frac{f^{(k)}(t)}{k!};     \label{20}\tag{20} 
\end{align*}
let $P=b'_kX^k+\cdots+b'_0$ be the polynomial interpolating the points $(n+a_j,m_j)$, then we also have
\begin{align*}
b'_k=\sum_{j=0}^{k} \frac{m_j}{\prod_{0\leq i\leq k,i\neq j}(a_j-a_i)}=\frac{A_k}{D_k};
\end{align*}
where we can see $A_k\in \mathbb{Z}$ and $D_k=\prod_{0\leq i<j\leq k}(a_j-a_i)>0$, and we can get
\begin{align*}
b'_k=b_k-\sum_{j=0}^{k}\frac{\delta_j}{\prod_{0\leq i\leq k,i\neq j}(a_j-a_i)};
\end{align*}
and then 
\begin{align*}
k!A_k=k!D_kb'_k=D_kf^{(k)}(t)-k!D_k\sum_{j=0}^{k}\frac{\delta_j}{\prod_{0\leq i\leq k,i\neq j}(a_j-a_i)}=x+y.
\end{align*}
Since $|\delta_j|<\delta$ and $|a_j-a_i|\geq 1$ for all $i\neq j$, we have
\begin{align*}
|y|<k!D_k\delta\sum_{j=0}^{k}\frac{1}{\prod_{0\leq i\leq k,i\neq j}|a_j-a_i|}\leq (k+1)!D_k\delta.
\end{align*}
Now from (\ref{17}), (\ref{18}) and (\ref{20}), the above becomes
\begin{align*}
|y|< \lambda_kD_k\leq D_k |f^{(k)}(t)|=|x|.
\end{align*}
It is easy to see that $|x|>\frac{1}{2}$ since $x+y=k!A_k$ is an integer, because $|x|>|y|$ which means $x+y\neq 0$ and $1\leq |x+y|\leq |x|+|y|<2|x|$. Also we can see $a_0<a_1<\cdots<a_k$, so 
\begin{align*}
D_k=\prod_{0\leq i<j\leq k}(a_j-a_i)\leq \prod_{0\leq i<j\leq k}a_j\leq \prod_{0\leq i<j\leq k}a_k\leq a_k^{\frac{k(k+1)}{2}}. 
\end{align*}
With (\ref{17}), we get
\begin{align*}
\frac{1}{2}<D_k|f^{(k)}(x)|\leq a_k^{\frac{k(k+1)}{2}}c_k\lambda_k=\frac{\lambda_k}{2}((2k)^{-1}a_k\alpha_k)^{\frac{k(k+1)}{2}}
\end{align*}
which implies (\ref{19}), and concludes the proof.
\end{proof}
The $k$th derivative test stated above does generalize the first derivative test, but we can see that many bounds we used in the proof could potentially be improved, such as $|a_j-a_i|$ could be a larger number than $1$ in many special cases, which would enable us to give a better upper bound for $|y|$, that means the condition (\ref{18}) could be less restrictive. Naturally, the quest to establish a $k$th derivative test without the condition (\ref{18}) arises. This leads us to the result given by Huxley and Sargos which we have stated in Theorem \ref{thm1}. To prove it is rather difficult, we would need several tools first and then we will be in a the position to do so.
\subsection{Preparatory Lemmas and Major Arcs} 
The aim of this section is to discuss the techniques and tools that we will be using in the proof of Theorem \ref{thm1}, so we may assume some of the conditions, such as $k\geq 3$, which are stated in the Theorem. The first tool we will introduce is an easy enumeration principle \citep[p. 374--375 Lemma 5.9]{huxley}.
\begin{lem}\label{lem3}
Let $S$ be a finite set of integers, $S=\lbrace a_1<a_2\cdots<a_n\rbrace$ such that $|a_n-a_1|\leq N$. If one can cover $S$ by pairwise distinct intervals $\mathcal{I}$ and if $L(\mathcal{I})$ is the length of $\mathcal{I}$, then
\begin{align*}
|S|\leq N \max_{\mathcal{I}}\Big( \frac{|S\cap \mathcal{I}|}{L(\mathcal{I})}\Big)+2\max_{\mathcal{I}}|S\cap \mathcal{I}|.
\end{align*}
\end{lem}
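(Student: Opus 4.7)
The plan is to sort the covering intervals from left to right and peel off the two extreme ones. Write the cover as $\mathcal{I}_1,\dots,\mathcal{I}_r$ in increasing order of position, and set $\ell_j = L(\mathcal{I}_j)$, $s_j = |S\cap \mathcal{I}_j|$, $M = \max_j s_j/\ell_j$, and $K = \max_j s_j$. The first step is to observe that because the intervals of the cover are pairwise disjoint (the intended reading of ``pairwise distinct'' in the source) and their union contains $S$, the extreme points $a_1$ and $a_n$ must lie respectively in $\mathcal{I}_1$ and $\mathcal{I}_r$.

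The key observation I would then isolate is that every middle interval $\mathcal{I}_j$ with $2 \le j \le r-1$ is squeezed between $\mathcal{I}_1$ and $\mathcal{I}_r$, and hence is contained in $[a_1,a_n]$. Pairwise disjointness of these middle intervals immediately yields
\[
\sum_{j=2}^{r-1} \ell_j \;\le\; a_n - a_1 \;\le\; N.
\]
From here I would decompose $|S| = s_1 + s_r + \sum_{j=2}^{r-1} s_j$, bound the two boundary terms by $s_1, s_r \le K$, bound each middle term by $s_j \le M\ell_j$, and sum to obtain
\[
|S| \;\le\; 2K + M \sum_{j=2}^{r-1} \ell_j \;\le\; 2K + MN,
\]
which is exactly the claim. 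The degenerate cases $r = 1$ and $r = 2$ (no middle interval at all) collapse to $|S| \le 2K$, already consistent with the bound.

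There is no genuine obstacle here; the only delicate point is that the statement truly requires the intervals to be pairwise disjoint, since overlapping intervals could inflate $|S|$ beyond $MN + 2K$ by double-counting. The argument is otherwise a short averaging/telescoping-type split, driven entirely by the observation that the two \emph{boundary} intervals are the only ones that can extend outside $[a_1,a_n]$.
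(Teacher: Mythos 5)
Your argument is correct and is essentially the same as the paper's: both sort the cover, observe that the middle intervals lie inside $[a_1,a_n]$ so their total length is at most $N$, and split $|S|$ into the two boundary contributions (bounded by the max cardinality) plus the middle contributions (each bounded by $M\,\ell_j$). Your write-up is slightly more explicit about why $a_1$ and $a_n$ must sit in the first and last intervals and about the degenerate cases, but there is no substantive difference.
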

\begin{proof}
Let $\mathcal{I}_1,\dots,\mathcal{I}_J$ be such a covering of $S$, pairwise distinct here simply means if $h\neq j$, then $\mathcal{I}_h \cap \mathcal{I}_j=\emptyset$ and if $2\leq j\leq J-1$, then $\mathcal{I}_j\subseteq [a_1,a_n]$, and what follows next is very easy to see,
\begin{align*}
|S|\leq \sum_{j=1}^{J}|S \cap \mathcal{I}_j|&=\sum_{j=2}^{J-1}\Big\lbrace L(\mathcal{I}_j)\frac{|S \cap \mathcal{I}_j|}{L(\mathcal{I}_j)} \Big\rbrace +|S \cap \mathcal{I}_1| +|S \cap \mathcal{I}_J|\\
& \leq \max_{1\leq j\leq J}\Big( \frac{|S\cap \mathcal{I}_j|}{L(\mathcal{I}_j)}\Big)\sum_{j=2}^{J-1}L(\mathcal{I}_j) + 2\max_{1\leq j\leq J}|S\cap \mathcal{I}_j|\\
&\leq N\max_{1\leq j\leq J}\Big( \frac{|S\cap \mathcal{I}_j|}{L(\mathcal{I}_j)}\Big)+ 2\max_{1\leq j\leq J}|S\cap \mathcal{I}_j|.
\end{align*}
\end{proof}
The next tool we will introduce is a certain class of inequalities which exist to bound a derivatives of some function if we know the bound of its highest order of derivative and they were the lowest. They are called the Landau-Hadamard-Kolmogorov inequalities. It was first developed by E. Landau \citep{landau} in 1913 with rather restricted conditions, explicit statements of the following three theorems can also be found in \citep[p. 374]{huxley}.
\begin{thm} [E. Landau]\label{thm6}
If $I$ is an interval with length $\geq 2$ and $f\in C^2(I)$ satisfies the conditions $|f(x)|\leq 1$ and $|f''(x)|\leq 1$ on $I$, then $|f'(x)|\leq 2$ and the constant 2 is the best possible.
\end{thm}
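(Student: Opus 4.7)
The plan is to fix an arbitrary $x_0 \in I$, choose a subinterval $[a,b] \subseteq I$ of length exactly $2$ containing $x_0$, and apply Taylor's theorem at the two endpoints to extract $f'(x_0)$ from the values $f(a), f(b)$ and bounded contributions from $f''$. Such a length-$2$ subinterval always exists because $|I| \ge 2$: if $x_0$ is at distance at least $1$ from both endpoints of $I$, center $[a,b]$ at $x_0$; otherwise, push $[a,b]$ flush against the nearer endpoint of $I$. This reduction effectively turns the problem into the case $I = [a,b]$ with $b-a = 2$.

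First I would expand, by Taylor's theorem with Lagrange remainder,
\[
f(a) = f(x_0) + (a-x_0) f'(x_0) + \tfrac{1}{2}(a-x_0)^2 f''(\xi_1),
\]
\[
f(b) = f(x_0) + (b-x_0) f'(x_0) + \tfrac{1}{2}(b-x_0)^2 f''(\xi_2),
\]
for some $\xi_1 \in [a,x_0]$ and $\xi_2 \in [x_0,b]$. Subtracting the first from the second and solving for $f'(x_0)$ gives
\[
f'(x_0) = \frac{f(b)-f(a)}{b-a} + \frac{(x_0-a)^2 f''(\xi_1) - (b-x_0)^2 f''(\xi_2)}{2(b-a)}.
\]
Applying $|f| \le 1$, $|f''| \le 1$, and $b-a = 2$, and letting $u = x_0 - a$, $v = b - x_0$ (so $u,v \ge 0$ and $u+v = 2$), this yields
\[
|f'(x_0)| \le 1 + \frac{u^2+v^2}{4} \le 1 + \frac{(u+v)^2}{4} = 2,
\]
where the middle step uses $u^2+v^2 \le (u+v)^2$ because $uv \ge 0$.

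For the sharpness of the constant, the quadratic $f(x) = 1 - \tfrac{1}{2}(2-x)^2$ on $[0,2]$ satisfies $|f| \le 1$ and $|f''| = 1$, while $f'(0) = 2$, so the value $2$ cannot be replaced by anything smaller. The main obstacle I foresee is genuinely mild and amounts to bookkeeping: one must verify that the length-$2$ subinterval $[a,b]$ fits inside $I$ for every choice of $x_0$ (handled by the case split in the first paragraph) and keep track of which Lagrange-remainder point $\xi_i$ lies in which half so that $|f''(\xi_i)| \le 1$ is genuinely applicable. Beyond that, the entire argument is a single Taylor expansion paired with the elementary inequality $u^2 + v^2 \le (u+v)^2$.
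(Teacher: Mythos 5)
The paper does not actually prove Theorem~\ref{thm6}: it is quoted as a classical background result (with references to Landau and to Bordell\`es) on the way to Lemma~\ref{lem6}, which is the form actually used later. There is therefore no in-paper argument for you to match or diverge from, and your proposal must be judged on its own. It is correct. The reduction to a length-$2$ window $[a,b]\subseteq I$ containing $x_0$ is valid whenever $|I|\geq 2$; the two Taylor expansions at $x_0$ with Lagrange remainder, subtracted to cancel $f(x_0)$ and solved for $f'(x_0)$, give exactly the displayed identity; and with $u=x_0-a\geq 0$, $v=b-x_0\geq 0$, $u+v=2$, the bound
\begin{align*}
|f'(x_0)|\leq \frac{|f(b)|+|f(a)|}{b-a}+\frac{u^2+v^2}{2(b-a)}\leq 1+\frac{(u+v)^2}{4}=2
\end{align*}
is airtight, since $u^2+v^2\leq (u+v)^2$ follows from $uv\geq 0$. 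The extremal example $f(x)=1-\tfrac12(2-x)^2$ on $[0,2]$ satisfies $-1\leq f\leq 1$, $f''\equiv -1$, and $f'(0)=2$, so the constant $2$ is indeed attained and cannot be lowered. One tiny bookkeeping remark: the points $\xi_1,\xi_2$ supplied by the Lagrange-remainder form lie in $(a,x_0)$ and $(x_0,b)$ respectively (interpreting these as degenerate when $x_0$ coincides with an endpoint, in which case the corresponding remainder term is zero anyway), so $|f''(\xi_i)|\leq 1$ always applies; you already flagged this, and it is handled correctly.
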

This was generalized by Hadamard \citep{ako}, without the explicit restrictions on the bound of $|f|$ and $|f''|$ .
\begin{thm} [Hadamard] \label{thm7}
If $f\in C^2([a,a+L])$ and $a\in \mathbb{R}$, $L>0$, then
\begin{align*}
\sup_{a\leq x\leq a+L} |f'(x)|\leq \frac{2}{L}\sup_{a\leq x\leq a+L}|f(x)|+\frac{L}{2}\sup_{a\leq x \leq a+L}|f''(x)|.
\end{align*}
\end{thm}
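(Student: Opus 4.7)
The plan is to apply Taylor's theorem with Lagrange remainder at both endpoints of the interval, with the base point of the expansion being an arbitrary $x \in [a, a+L]$. Write $M_0 = \sup_{[a,a+L]} |f|$ and $M_2 = \sup_{[a,a+L]}|f''|$; the goal is to show $|f'(x)| \le \tfrac{2}{L}M_0 + \tfrac{L}{2}M_2$ pointwise, after which taking the sup over $x$ yields the stated inequality.

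First I would fix $x \in [a, a+L]$ and expand $f$ at the two endpoints about $x$: there exist $\xi_1 \in [a,x]$ and $\xi_2 \in [x, a+L]$ with
\begin{align*}
f(a) &= f(x) + f'(x)(a-x) + \tfrac{1}{2}f''(\xi_1)(a-x)^2, \\
f(a+L) &= f(x) + f'(x)(a+L-x) + \tfrac{1}{2}f''(\xi_2)(a+L-x)^2.
\end{align*}
Subtracting the first from the second, the $f(x)$ terms cancel and the coefficient of $f'(x)$ is $(a+L-x)-(a-x) = L$, so
\begin{align*}
f'(x) = \frac{f(a+L)-f(a)}{L} - \frac{1}{2L}\bigl[f''(\xi_2)(a+L-x)^2 - f''(\xi_1)(x-a)^2\bigr].
\end{align*}

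Next I would apply the triangle inequality, bounding $|f(a+L) - f(a)| \le 2M_0$ and each $|f''(\xi_i)| \le M_2$, to obtain
\begin{align*}
|f'(x)| \le \frac{2M_0}{L} + \frac{M_2}{2L}\bigl[(x-a)^2 + (a+L-x)^2\bigr].
\end{align*}
Setting $t = x-a \in [0,L]$, the elementary identity $t^2 + (L-t)^2 = L^2 - 2t(L-t)$ shows that the bracketed quantity is at most $L^2$. Substituting this in collapses the bound to $\tfrac{2}{L}M_0 + \tfrac{L}{2}M_2$, and taking the supremum over $x \in [a, a+L]$ gives the stated inequality.

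There is no real obstacle here; the only non-obvious step is picking $x$ itself as the expansion point at both ends (rather than expanding at $a$ and $a+L$ about some intermediate point), so that the $f'(x)$ terms combine additively and the $f(x)$ terms cancel. The quadratic estimate $t^2+(L-t)^2 \le L^2$ is what delivers exactly the factor of $L/2$ on the $M_2$ term and shows the inequality is sharp at the endpoints.
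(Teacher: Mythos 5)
Your proof is correct and is the standard argument for Hadamard's inequality: Taylor-expand $f$ at both endpoints about the variable base point $x$, so the zeroth-order terms cancel and the first-order terms combine into $f'(x)L$, then bound the quadratic remainders using $t^2+(L-t)^2\le L^2$ on $[0,L]$. The paper states Theorem \ref{thm7} without proof (it is cited as a classical result en route to Neder's generalization, Theorem \ref{thm8}), so there is no in-text argument to compare against; your derivation fills that gap cleanly. One small caveat: your closing remark that the inequality is ``sharp at the endpoints'' only reflects the extremal case of the elementary estimate $t^2+(L-t)^2\le L^2$; genuine sharpness of the constants (cf.\ Landau's constant $2$ in Theorem \ref{thm6}) would require exhibiting an extremal function and is not established by this computation.
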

As with the development of the derivative tests we have introduced, the next natural step was to generalize it for higher order of derivatives \citep{Lned}.
\begin{thm} [L. Neder] \label{thm8}
If $a\in \mathbb{R}$, $L>0$ and $f\in C^k[a,a+L]$, then, for all $j\in \lbrace 1,\dots,k-1\rbrace$, we have
\begin{align*}
\sup_{a\leq x\leq a+L}|f^{(j)}(x)|\leq \frac{(2k)^{2k}}{L^{j}}\sup_{a\leq x\leq a+L}|f(x)|+L^{k-j}\sup_{a\leq x\leq a+L}|f^{(k)}(x)|. \tag{21}\label{21}
\end{align*}
\end{thm}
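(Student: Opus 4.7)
The plan is to prove the inequality by induction on $k$, using Hadamard's theorem (Theorem~\ref{thm7}) as the base case.  For $k=2$, Hadamard gives $\sup|f'|\leq \tfrac{2}{L}\sup|f|+\tfrac{L}{2}\sup|f''|$, which is strictly stronger than what Theorem~\ref{thm8} claims, since $(2\cdot 2)^{2\cdot 2}=256\geq 2$ and $L\geq L/2$.  The induction therefore starts with a lot of slack in the constants, which turns out to be important because the inductive step is rather lossy.

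For the inductive step $(k-1)\Rightarrow k$ with $k\geq 3$, I would apply Hadamard's theorem to $f^{(k-2)}\in C^{2}[a,a+L]$ to obtain
\begin{align*}
\sup|f^{(k-1)}|\leq \tfrac{2}{L}\sup|f^{(k-2)}|+\tfrac{L}{2}\sup|f^{(k)}|,
\end{align*}
and then invoke the inductive hypothesis for $k-1$, applied to $f\in C^{k-1}[a,a+L]$ with $j=k-2$, to bound $\sup|f^{(k-2)}|$ in terms of $\sup|f|$ and $\sup|f^{(k-1)}|$.  Combining these two estimates yields a self-referential inequality for $\sup|f^{(k-1)}|$; once solved, the cases $j<k-1$ follow from one further application of the inductive hypothesis, using the $j=k-1$ bound just obtained to convert any $\sup|f^{(k-1)}|$ term into $\sup|f|$ and $\sup|f^{(k)}|$.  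The hitch is that a naive substitution produces a coefficient of $2$ in front of $\sup|f^{(k-1)}|$ on the right-hand side, so the resulting inequality cannot be solved for $\sup|f^{(k-1)}|$.  The remedy is to perform the Hadamard step on a subinterval of length $\ell<L$, cover $[a,a+L]$ by finitely many such subintervals, take a maximum, and optimize $\ell$ so that the dangerous coefficient drops below one.

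The main obstacle in either route is the bookkeeping of constants: at each inductive step one must verify that the leading constant does not exceed $(2k)^{2k}$.  Fortunately this bound is extremely generous---the ratio $(2k)^{2k}/(2(k-1))^{2(k-1)}$ grows super-polynomially in $k$---so multiplicative losses of order $k^{O(1)}$ per step are absorbed without difficulty, and the subinterval optimization only has to push the absorption coefficient below $1$ by a definite margin, not by a $k$-dependent amount.  If the subinterval optimization proves cumbersome, a cleaner alternative avoiding the induction altogether is to expand $f$ in its Taylor polynomial $\mathcal{P}_{x_{0}}$ of degree $k-1$ at each $x_{0}\in[a,a+L]$, use the Lagrange remainder to bound $|f-\mathcal{P}_{x_{0}}|\leq \tfrac{L^{k}}{k!}\sup|f^{(k)}|$, and then apply a Markov-type bound to $|\mathcal{P}_{x_{0}}^{(j)}(x_{0})|=|f^{(j)}(x_{0})|$ using that $\mathcal{P}_{x_0}$ has degree at most $k-1$ on an interval of length $L$.
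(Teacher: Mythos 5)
The paper does not actually give a proof of Theorem~\ref{thm8}: the statement is cited to Neder \citep{Lned}, and the exposition passes immediately to the sharper Gorny estimate, Lemma~\ref{lem6}, which is the version used later (also cited, not proved). So there is no paper proof to compare against, and I assess your proposal on its own merits.

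The central premise of your bookkeeping---that $(2k)^{2k}$ is ``extremely generous'' because $(2k)^{2k}/(2(k-1))^{2(k-1)}$ grows super-polynomially in $k$---is false. Since $(k/(k-1))^{2(k-1)}\to e^2$, that ratio is asymptotically $4e^2k^2$, i.e.\ only $O(k^2)$. Your subinterval trick incurs a genuinely geometric loss that this cannot absorb. Concretely, with Hadamard applied on $[a,a+L]$ and the inductive hypothesis (level $k-1$, $j=k-2$) on a subinterval of length $\ell$, solving the self-referential inequality for $\sup|f^{(k-1)}|$ forces $\ell\leq L/4$ so that the coefficient of $\sup|f^{(k)}|$ stays at most $L$, and the resulting leading constant at $j=k-1$ is $4^{k-1}(2(k-1))^{2(k-1)}$; this already exceeds $(2k)^{2k}$ at $k=6$ (compare $4^5\cdot 10^{10}\approx 1.02\times 10^{13}$ with $12^{12}\approx 8.92\times 10^{12}$), so the induction does not close. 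The Taylor--Markov alternative has the same defect at $j=k-1$: the Lagrange remainder $|f-\mathcal{P}_{x_0}|\leq (L^k/k!)\sup|f^{(k)}|$ together with Markov's inequality for the degree-$(k-1)$ polynomial $\mathcal{P}_{x_0}$ puts a coefficient $2^{j}T_{k-1}^{(j)}(1)/k!$ on $L^{k-j}\sup|f^{(k)}|$, which for $j=k-1$ equals $2^{2k-3}/k>1$; shrinking to a subinterval of length $\ell=kL/2^{2k-3}$ to compensate pushes the $\sup|f|$ coefficient to roughly $2^{k(2k-3)}(k-1)!/k^{k-1}$, which overshoots $(2k)^{2k}$ again for $k\geq 6$. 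Either route needs a tighter step (classical proofs of Landau--Kolmogorov bounds usually go through finite differences with an optimally chosen step length, which avoids accumulating these geometric factors) before the stated constant can be recovered.
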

Theorem \ref{thm8} stated above is sufficient enough for us to use in the proof of Theorem \ref{thm1}. But what we will use is the following improved version due to Gorny \citep{gorny1939contribution}, the original source should also be in Bordell\`es \citep[Lemma 5.10]{huxley}.
\begin{lem}[Gorny]\label{lem6}
Let $k\geq 2$ be an integer, $a\in\mathbb{R}$, $L>0$ and $f\in C^k[a,a+L]$ such that, for all $x\in [a,a+L]$, we have
\begin{align*}
|f(x)|\leq M_0 \quad \text{and} \quad |f^{(k)}(x)|\leq M_k
\end{align*}
with $M_0<\infty$ and $M_k<\infty$. Then, for all $x\in [a,a+L]$ and $j\in \lbrace 1,\dots,k-1\rbrace$, we have
\begin{align*}
|f^{(j)}(x)|<4(e^2k/j)^jM_0^{1-j/k}\big\lbrace max\big( M_k,k!M_0L^{-k}\big) \big\rbrace^{j/k}.
\end{align*}
In practice, we will use this result in the following form
\begin{align*}
|f^{(j)}(x)|<4e(ek/j)^j\big\lbrace k^{j+1}M_0L^{-j}+e^{j-1}M_0^{1-j/k}M_k^{j/k}\big\rbrace. \label{22}\tag{22}
\end{align*}
\end{lem}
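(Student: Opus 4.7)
My plan is to derive the first inequality by a sub-interval optimisation, splitting into cases to produce the $\max$, and then deduce the practical form (\ref{22}) algebraically.

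I would fix $x \in [a, a+L]$ and an auxiliary length $\ell \in (0, L]$, choosing a sub-interval $J \subset [a, a+L]$ of length $\ell$ containing $x$; the hypotheses on $f$ hold on $J$ with the same $M_0$ and $M_k$. Applying a Landau--Hadamard--Neder type bound on $J$ (Theorem~\ref{thm8}, or more sharply an iterated form of Hadamard's inequality Theorem~\ref{thm7}) gives
\begin{equation*}
|f^{(j)}(x)| \;\leq\; C_{k,j}\, \ell^{-j}\, M_0 \;+\; \ell^{k-j}\, M_k
\end{equation*}
for an explicit constant $C_{k,j}$. I would then minimise the right-hand side over $\ell$; the two terms balance at $\ell_\star^k \asymp C_{k,j} M_0 / M_k$, where the combined bound is proportional to $M_0^{1-j/k} M_k^{j/k}$. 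If $\ell_\star \leq L$ this choice is admissible. Otherwise, namely when $M_k < k! M_0 L^{-k}$ up to constants inherited from $C_{k,j}$, I am forced to take $\ell = L$ and the $\ell^{-j} M_0$ term dominates, producing a bound with $(k! M_0 L^{-k})^{j/k}$. The two regimes unify into the single expression $M_0^{1-j/k} \max(M_k,\, k! M_0 L^{-k})^{j/k}$ appearing in the statement.

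The main technical obstacle will be pinning down the constant $4(e^2 k/j)^j$ itself. Theorem~\ref{thm8} alone supplies $C_{k,j} = (2k)^{2k}$, which is far too crude. To recover Gorny's sharp constant I expect to iterate Hadamard's inequality in a balanced way across the intermediate derivatives, bounding $\sup |f^{(i)}|$ in terms of $\sup |f^{(i-1)}|$ and $\sup |f^{(i+1)}|$ with carefully tuned sub-interval lengths, and to collapse the accumulated factorials via Stirling's formula. This induction is the heart of the original argument, so I would either reproduce it in full or cite \citep{gorny1939contribution} for it. Finally, the practical form (\ref{22}) follows algebraically from the first inequality: since $j/k \leq 1$ we have $\max(A,B)^{j/k} \leq A^{j/k} + B^{j/k}$, and splitting the $\max$ together with the identity $4(e^2 k/j)^j = 4e (ek/j)^j \cdot e^{j-1}$ converts the $M_k^{j/k}$ piece directly into the second summand of (\ref{22}); the Stirling estimate $(k!)^{j/k} \leq e^{1-j} k^{j+1}$, valid for $1 \leq j \leq k-1$, then converts the $(k! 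M_0 L^{-k})^{j/k}$ piece into the $k^{j+1} M_0 L^{-j}$ summand.
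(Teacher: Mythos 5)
The paper states Lemma~\ref{lem6} without proof, attributing the first inequality to \citep{gorny1939contribution} and Bordell\`es and then simply asserting the practical form (\ref{22}), so there is no internal argument to compare your sketch against. Your sub-interval optimisation is the right strategy for a Landau--Kolmogorov type inequality, and you correctly diagnose that Theorem~\ref{thm8}'s constant $(2k)^{2k}$ is far too coarse to yield $4(e^2k/j)^j$: obtaining Gorny's constant requires his balanced iteration of Hadamard's inequality across the intermediate derivatives, which neither you nor the paper reproduce, so falling back on a citation here is consistent with the source. What you add, and the paper omits, is a clean derivation of (\ref{22}) from the first bound, and all three of your ingredients check out: $\max(A,B)^{j/k}=\max(A^{j/k},B^{j/k})\leq A^{j/k}+B^{j/k}$ since $0<j/k\leq 1$; the identity $4(e^2k/j)^j=4e(ek/j)^j\,e^{j-1}$; and the Stirling-type estimate $(k!)^{j/k}\leq e^{1-j}k^{j+1}$ for $1\leq j\leq k$. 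The last deserves a line of justification: from $k!\leq e^{1-k}k^{k+1}$ (which holds for all $k\geq 1$ with equality at $k=1$, and is preserved under $k\mapsto k+1$ since the ratio $e\bigl(k/(k+1)\bigr)^{k+1}<1$) one gets $(k!)^{j/k}\leq e^{(1-k)j/k}k^{(k+1)j/k}$, and the desired bound rearranges to $(ek)^{j/k}\leq ek$, which holds because $j/k\leq 1$.
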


The next technique \citep[Lemma 5.11]{huxley} will be used to calculate the implied constant appeared in Theorem \ref{thm1}.
\begin{lem} \label{lem4}
Let $k\geq 3$ be an integer and $a>e(k-1)$ be a real number. Then 
\begin{align*}
\sum_{j=1}^{k-1}\Big( \frac{a}{j}\Big)^{2j}\leq e^2k\Big(\frac{a}{k} \Big)^{2k-2}.
\end{align*}
\end{lem}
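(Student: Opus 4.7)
The plan is to bound every term in the sum by the largest one and then count. Let $g(j)=(a/j)^{2j}$, viewed as a function of a real variable $j>0$. Taking logarithms, $\log g(j)=2j\log(a/j)$, so $(\log g)'(j)=2\log(a/j)-2$, which vanishes at $j=a/e$ and is positive for $j<a/e$. Thus $g$ is strictly increasing on the interval $(0,a/e]$.

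Next I would use the hypothesis $a>e(k-1)$ to conclude that $k-1<a/e$, so in particular every integer $j\in\{1,\dots,k-1\}$ lies in the range where $g$ is increasing. Hence for each such $j$,
\begin{align*}
\left(\frac{a}{j}\right)^{2j} \;=\; g(j) \;\leq\; g(k-1) \;=\; \left(\frac{a}{k-1}\right)^{2(k-1)}.
\end{align*}

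Then I would compare $g(k-1)$ with the target quantity $(a/k)^{2k-2}$ by forming the ratio
\begin{align*}
\frac{g(k-1)}{(a/k)^{2k-2}} \;=\; \left(\frac{k}{k-1}\right)^{2(k-1)} \;=\; \left(\left(1+\tfrac{1}{k-1}\right)^{k-1}\right)^{2}.
\end{align*}
Since the sequence $(1+1/n)^n$ is strictly increasing with limit $e$, the inner factor is strictly less than $e$, so the whole expression is strictly less than $e^2$. Therefore $g(k-1)<e^2(a/k)^{2k-2}$.

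Finally, the sum has $k-1$ terms, each bounded by $e^2(a/k)^{2k-2}$, which gives
\begin{align*}
\sum_{j=1}^{k-1}\left(\frac{a}{j}\right)^{2j} \;\leq\; (k-1)\cdot e^2\left(\frac{a}{k}\right)^{2k-2} \;\leq\; e^2 k\left(\frac{a}{k}\right)^{2k-2},
\end{align*}
as required. There is no real obstacle here: the only thing to verify carefully is that the monotonicity argument applies at the endpoint $j=k-1$, which is precisely where the hypothesis $a>e(k-1)$ (strict inequality) is used.
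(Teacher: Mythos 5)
Your proof is correct and follows essentially the same route as the paper: both use the monotonicity of $j\mapsto(a/j)^{2j}$ on $(0,a/e]$ (enabled by $a>e(k-1)$) to bound the sum by $(k-1)(a/(k-1))^{2k-2}$, and then compare this to $e^2k(a/k)^{2k-2}$. The only cosmetic difference is in the final comparison: you invoke $(1+1/(k-1))^{k-1}<e$ directly and then use $k-1\leq k$, whereas the paper takes logarithms, applies $\log(1+x)\leq x$, and absorbs the factor $(k-1)/k$ into the exponent, obtaining $\exp\big(\tfrac{2k-3}{k-1}\big)\leq e^2$; both routes are equivalent in substance.
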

\begin{proof}
By some brute force calculation, we can find that the derivative of $\Big( \frac{e(k-1)}{x}\Big)^{2x}$ equals $0$ only when $x=k+1$, and it is a local maximum. Since $a>e(k-1)$, we now can say that the function $x\mapsto (a/x)^{2x}$ is increasing as soon as $1\leq x\leq k-1$ and so
\begin{align*}
\sum_{j=1}^{k-1}\Big( \frac{a}{j}\Big)^{2j}\leq (k-1)\Big(\frac{a}{k-1} \Big)^{2k-2}
\end{align*}
and further more
\begin{align*}
\exp\Big\lbrace \log \Big( \frac{k-1}{k}\Big(\frac{a}{k-1}\times\frac{k}{a}\Big)^{2k-2}\Big)\Big\rbrace=\exp\Big\lbrace (2k-1)\log \Big( 1+\frac{1}{k-1}\Big)\Big\rbrace. \label{23}\tag{23}
\end{align*}
Notice that if we let $\frac{1}{k-1}-\log(1+\frac{1}{k-1})=b$, we will find $-b=\log \Big( \frac{1+\frac{1}{k-1}}{e^{\frac{1}{k-1}}}\Big)$, and we know $1+x\leq e^x$ to be true by the fact that $1+x$ is tangent to $e^x$ at $x=0$ and $e^x$ is convex. So $b\geq 0$, and to continue from (\ref{23}), we have
\begin{align*}
\exp\Big\lbrace (2k-1)\log \Big( 1+\frac{1}{k-1}\Big)\Big\rbrace \leq \exp\Big( \frac{2k-3}{k-1}\Big)\leq e^2. 
\end{align*}
Rearrange things a bit, we will get the result.
\end{proof} 
For the next technique, we will have to dedicate a few rather long pages of explanation to cover the whole idea, first we will write down it's definition \citep[Definition 5.3]{huxley}.
\begin{defn}\label{def1} (Major arc)
\begin{enumerate}
\item A major arc associated to $f^{(k)}$ is a maximal set $\mathcal{A}=\lbrace n_1,\dots,n_J\rbrace$ of consecutive points of $\mathcal{S}(f,N,\delta)$, where $J\geq k+1$ is an integer, such that, for all $j\in \lbrace 1,\dots,J\rbrace$, we have ($\lfloor x\rceil$ is the nearest integer to $x$)
\begin{align*}
\lfloor f(n_j) \rceil=P(n_j) 
\end{align*}
where $P\in \mathbb{Q}[X]$ is the Lagrange polynomial of degree $<k$ interpolating the points $(n_j,\lfloor f(n_j)\rceil)$. The equation $y=P(x)$ is called the equation of $\mathcal{A}$. We set $\mathcal{C}_P$ the curve with equation $y=P(x)$.
\item Let $q$ be the smallest positive integer such that $P\in \frac{1}{q}\mathbb{Z}[X]$. Then $q$ is called the denominator of $\mathcal{A}$.
\end{enumerate}
\end{defn}
Notice that the maximal set of points $\mathcal{A}$ depends on the choices of the Lagrange polynomial and the major arc may not exist for certain such choices. It might be easier to understand what exactly does the definition mean by simply constructing a major arc. Select $k$ elements from $\mathcal{S}(f,N,\delta)$. Apply the formula for Lagrange interpolation polynomial $P$ on these points. Since the elements we are interpolating are all integral points, we don't have to worry about the condition $P\in \mathbb{Q}[X]$. Let $\mathcal{B}$ be the set such that $\mathcal{B}=\lbrace n\in \mathcal{S}(f,N,\delta): \lfloor f(n)\rceil=P(n) \rbrace$. $\mathcal{A}$ is the set which contains the longest sequence of elements in $\mathcal{B}$ that are consecutive in $\mathcal{S}(f,N,\delta)$. To distinguish whether $\mathcal{A}$ is a major arc or not, we need to see if $|\mathcal{A}|\geq k+1$. 

To continue, we give the following notation
\begin{align*}
\mathcal{C}_\delta=\Big\lbrace (x,y)\in [N,2N]\times \mathbb{R}: |y-f(x)|<\delta\Big\rbrace.
\end{align*}
One reasonable question to ask at this point would be what will it look like if $\mathcal{C}_\delta$ and $\mathcal{C}_P$ are being plotted on the same graph, and more specifically, what will their intersections be. The following graph should give us a better idea.
\begin{figure}[H]
\centering
\includegraphics[scale=0.45]{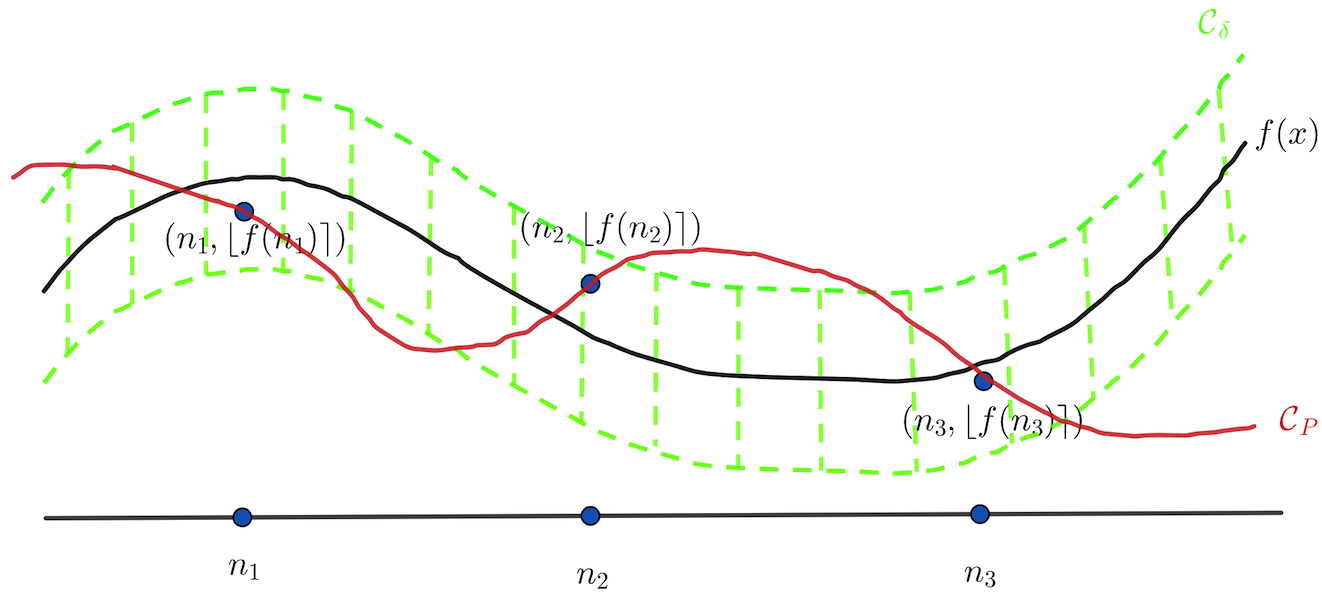}
\caption{$\mathcal{C}_\delta\cap \mathcal{C}_P$}
\label{fig1}
\end{figure}
As figure \ref{fig1} has shown, the region of intersection between $\mathcal{C}_\delta$ and $\mathcal{C}_P$ is being chopped into several connected components. Now, we will state the lemma \citep[Lemma 5.12]{huxley} which gives us a bound for the number of connected components of the set $\mathcal{C}_\delta\cap \mathcal{C}_P$.
\begin{lem}\label{lem5}
The set $\mathcal{C}_\delta\cap \mathcal{C}_P$ has at most $k$ connected components.
\end{lem}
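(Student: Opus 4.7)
The plan is to reduce the statement to a one-dimensional count and then apply Rolle's theorem to the error function $g(x) := f(x) - P(x)$ on $[N,2N]$. First I would observe that the map $x \mapsto (x, P(x))$ is a homeomorphism from $[N,2N]$ onto its graph $\mathcal{C}_P$, and that $(x, P(x)) \in \mathcal{C}_\delta$ if and only if $|g(x)| < \delta$; hence the connected components of $\mathcal{C}_\delta \cap \mathcal{C}_P$ are in bijective correspondence with the connected components of the open set $U := \{x \in [N,2N] : |g(x)| < \delta\}$, and it suffices to show $U$ has at most $k$ of them.

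Next, since $\deg P < k$, we have $g^{(k)} \equiv f^{(k)}$ on $[N,2N]$, and the standing hypothesis (\ref{1}) gives $|f^{(k)}(x)| \geq \lambda_k > 0$ everywhere, so $g^{(k)}$ has no zero on the interval. This forces $g^{(k-1)}$ to be strictly monotone, hence to have at most one zero, and iterating Rolle's theorem shows that $g^{(j)}$ has at most $k-j$ zeros on $[N,2N]$ for each $j \in \{1, \ldots, k-1\}$; in particular, $g'$ has at most $k-1$ zeros.

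The last step is to show that between any two consecutive components of $U$ a new zero of $g'$ must appear. Listing the components in increasing order as $(a_1,b_1), \ldots, (a_m,b_m)$, continuity gives $|g(b_j)| = |g(a_{j+1})| = \delta$ while $|g(x)| \geq \delta$ throughout $[b_j, a_{j+1}]$. If $g(b_j)$ and $g(a_{j+1})$ had opposite signs, the intermediate value theorem would produce a point with $g = 0$ in that closed interval, contradicting $|g| \geq \delta$; so the signs must agree, which forces $g$ to attain a local extremum in the open interval $(b_j, a_{j+1})$ and hence $g'$ to vanish there. Combining this with the Rolle count yields $m - 1 \leq k - 1$, i.e.\ $m \leq k$. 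The only step demanding genuine care is this sign-agreement observation at the gaps between consecutive components of $U$; everything else is a routine Rolle-type count.
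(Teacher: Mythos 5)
Your proof is correct and takes a genuinely different route from the paper's. You reduce the claim to counting connected components of the one-variable set $U = \{x \in [N,2N] : |g(x)| < \delta\}$ with $g = f - P$, and do so by iterated Rolle: since $g^{(k)} = f^{(k)}$ never vanishes on $[N,2N]$, the derivative $g'$ has at most $k-1$ zeros, while your sign-agreement observation (via the intermediate value theorem) together with Rolle applied to the equal boundary values $g(b_j) = g(a_{j+1}) = \pm\delta$ produces a distinct zero of $g'$ in each of the $m-1$ gaps between consecutive components, giving $m \leq k$. The paper instead first extends $f$ to a function $\tilde f$ on all of $\mathbb{R}$ with $|\tilde f^{(k)}| \asymp \lambda_k$, so that $\tilde{\mathcal{C}_\delta} \cap \mathcal{C}_P$ is bounded and every non-singleton component has two finite endpoints where $g = \pm\delta$; it then applies the divided-difference identity (the Theorem \ref{thm4} machinery) at $k+1$ such boundary points, exploiting that $P$ and $P \pm \delta$ have the same leading coefficient, to reach the contradiction $\tilde f^{(k)}(t) = 0$. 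Your route is more elementary and self-contained: it needs no extension gadget, no divided-difference formula, and no case distinction for singleton components (which you correctly discard for free since $U$ is open). The paper's route fits the divided-difference theme of the chapter, but as written it glosses over a small pigeonhole step — one needs $k+1$ endpoints lying on the \emph{same} side $+\delta$ or $-\delta$ for the leading coefficients of $P$ and $P \pm \delta$ to cancel uniformly — and your sign-agreement step is the corresponding, cleaner piece of bookkeeping.
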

\begin{proof}
First, we expand the region where $f(x)$ is defined. Let $\Tilde{f}\in C^k(\mathbb{R})$ be a function defined as $\tilde{f}(x)=f(x)$ for $x\in [N,2N]$, 
\begin{align*}
\tilde{f}^{(k)}=f^{(k)}(N) \text{ if } x\leq N \text{ and }
\tilde{f}^{(k)}(x)=f^{(k)}(2N) \text{ if } x\geq 2N, 
\end{align*}
so that $|\tilde{f}^{(k)}(x)|\asymp \lambda_k$. Since $\lambda_k>0$, we can see $\tilde{f}(x)$ is just some polynomial of degree $k$ in the interval $(-\infty,N]\cup[2N,\infty)$. Now $P$ is a polynomial of degree $<k$ by definition, so the behaviours of $\tilde{f}$ and $P$ at infinity are different and the set $\tilde{\mathcal{C}_\delta}\cap \mathcal{C}_P$ is bounded. Hence, at both ends of each connected component not reduced to a singleton (a single point intersection), we would have $P(x)=\tilde{f}(x)\pm \delta$ (the set $\tilde{\mathcal{C}_\delta}$ is the analogue of $\mathcal{C}_\delta$ for the function $\tilde{f}$). \\
\linebreak
What follows next is basically a consequence of divided differences, assume there are $k+1$ connected components not reduced to a singleton. Then the equation $P(x)=\tilde{f}(x)\pm \delta$ has at least $k+1$ solutions, say, $\alpha_0<\cdots<\alpha_k$. Since the polynomials $P(X)$ and $P(X)\pm \delta$ have the same leading coefficient, from (\ref{3}) and the fact that $P(X)$ has degree $<k$, for some $t_1,t_2\in [\alpha_0,\alpha_k]$ we get
\begin{align*}
0&=\frac{P^{(k)}(t_1)}{k!}=\sum_{j=0}^{k}\frac{P(\alpha_j)}{\prod_{0\leq i\leq k,i\neq j}(\alpha_j-\alpha_i)}=\sum_{j=0}^{k}\frac{P(\alpha_j)\pm \delta}{\prod_{0\leq i\leq k,i\neq j}(\alpha_j-\alpha_i)}\\
&=\sum_{j=0}^{k}\frac{\tilde{f}(\alpha_j)}{\prod_{0\leq i\leq k,i\neq j}(\alpha_j-\alpha_i)}=\frac{\tilde{f}^{(k)}(t_2)}{k!}\neq 0,
\end{align*}
which gives the contradiction.
\end{proof}
The above lemma gives a bound on the number of connected components and enables us to discuss which connected component contains most integral points. The following slight refinement of Definition \ref{def1} \citep[Definition 5.4]{huxley} would make this clear. Personally, I find it to be more intuitive.
\begin{defn}
Among the connected components of the set $\mathcal{C}_\delta\cap \mathcal{C}_P$, choose the one having the largest number of points $(n_{h+j},\lfloor f(n_{h+j})\rceil)$ for all $j\in \lbrace 1,\dots,l\rbrace$ with $l>k$. Then the set 
\begin{align*}
\bar{\mathcal{A}}=\lbrace n_{h+1},\dots,n_{h+l}\rbrace
\end{align*}
is called a proper major arc extracted from $\mathcal{A}$. The length of $\bar{\mathcal{A}}$ is the number
\begin{align*}
L=n_{h+l}-n_{h+1}.
\end{align*}
For convenience, we introduce the following numbers
\begin{align*}
a_k=36e^{-2}k(2e^3)^kc_k \quad\text{and}\quad b_k=20e^3k^2c_k^{\frac{2}{k(k-1)}},\tag{24}\label{24}
\end{align*}
so that $\beta_k=b_k+4k^2$, also $c_k$ comes from (\ref{1}). Notice that $a_k\geq 72e$ since $k,c_k\geq 1$.
\end{defn}
With all these definitions in hand, now we could deduce a few basic properties of the major arcs \citep[Lemma 5.13]{huxley}.
\begin{lem}\label{lem7}
Let $\mathcal{A}$ be a major arc associated to $f^{(k)}$ and $\bar{\mathcal{A}}$ be the proper major arc taken from $\mathcal{A}$ with denominator $q$, length $L$ and equation $y=P(x)$.
\begin{enumerate}
\item We have 
\begin{align*}
L\leq 2k\Big( \frac{\delta}{\lambda_k}\Big)^{\frac{1}{k}}.
\end{align*}
\item We have 
\begin{align*}
|\bar{\mathcal{A}}|\leq 2kLq^{-\frac{2}{k(k-1)}}.
\end{align*}
\item If $q\leq (a_k\delta)^{-1}$, then the distance $d$ between each point of $\mathcal{S}(f,N,\delta)\setminus\mathcal{A}$ and $\bar{A}$ satisfies
\begin{align*}
d>L(a_kq\delta)^{-\frac{1}{k}}.
\end{align*}
\end{enumerate}
\end{lem}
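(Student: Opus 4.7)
For part (1), I apply Theorem~\ref{thm4} (divided differences) to $f-P$. Since $\deg P<k$ we have $P[m_0,\ldots,m_k]=0$ for any $k+1$ points, giving
\begin{align*}
\frac{f^{(k)}(\xi)}{k!}=(f-P)[m_0,\ldots,m_k]=\sum_{j=0}^{k}\frac{f(m_j)-P(m_j)}{\prod_{i\neq j}(m_j-m_i)}.
\end{align*}
With $|f^{(k)}|\geq\lambda_k$ and $|f(m_j)-P(m_j)|<\delta$ on $\bar{\mathcal{A}}$, this yields
\begin{align*}
\frac{\lambda_k}{k!}\leq \delta\sum_{j=0}^{k}\frac{1}{\prod_{i\neq j}|m_j-m_i|}.
\end{align*}
To extract the factor $L^{-k}$, I would choose the $m_j$ as close as possible to the uniform grid $m_0+jL/k$; in that regime the denominators behave like $(L/k)^k\cdot j!(k-j)!$, the sum is bounded by $(2k)^k/(k!L^k)$, and rearranging gives $L\leq 2k(\delta/\lambda_k)^{1/k}$. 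The technical subtlety is that a uniformly distributed $(k+1)$-subset may not literally exist inside $\bar{\mathcal{A}}$; either a pigeonhole argument producing a sufficiently spread subset, or a direct combinatorial estimate showing the inequality survives the worst-case integer distribution, is required.

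For part (2), the denominator $q$ of $P$ plays the key role. Writing $R:=qP\in\mathbb{Z}[X]$ with $\deg R\leq k-1$, each $n\in\bar{\mathcal{A}}$ satisfies $R(n)=q\lfloor f(n)\rceil\equiv 0\pmod q$, since $P(n)\in\mathbb{Z}$. Minimality of $q$ ensures that $R$ is not identically zero modulo any prime dividing $q$, so $\bar{\mathcal{A}}$ lies in the root set of a nontrivial degree $\leq k-1$ polynomial congruence modulo $q$. Counting such roots inside an interval of length $L$---localising to prime-power factors of $q$, applying Lagrange's theorem on $\mathbb{F}_p$, and recombining via the Chinese remainder theorem---should yield $|\bar{\mathcal{A}}|\leq 2kLq^{-2/(k(k-1))}$. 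Pinning down the precise exponent $-2/(k(k-1))$ is the main obstacle; I expect it emerges from balancing the trivial density estimate $L/q$ against a Hensel-lifting multiplicity controlling how many roots mod $p$ lift to roots mod $p^e$.

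For part (3), suppose $n\in\mathcal{S}(f,N,\delta)\setminus\mathcal{A}$ lies at distance $d$ from $\bar{\mathcal{A}}$. Since $n\notin\mathcal{A}$ we have $\lfloor f(n)\rceil\neq P(n)$ (at least for the breaker adjacent to $\bar{\mathcal{A}}$, and more generally by separating the connected components of $\mathcal{C}_\delta\cap\mathcal{C}_P$ via Lemma~\ref{lem5}); both values lie in $\tfrac{1}{q}\mathbb{Z}$, so their gap is at least $1/q$, whence $|f(n)-P(n)|\geq 1/q-\delta$, which is comparable to $1/q$ under the hypothesis $q\leq(a_k\delta)^{-1}$. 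I then apply divided differences to $f-P$ at $\{n,m_1,\ldots,m_k\}$ for suitably chosen $m_1,\ldots,m_k\in\bar{\mathcal{A}}$. The summand indexed by $n$ contributes approximately $(1/q)/\prod_i|n-m_i|$, whose denominator contains the small factor $d$, while the remaining summands contribute only $\delta$-sized pieces; the whole sum equals $f^{(k)}(\xi)/k!$, bounded above by $c_k\lambda_k/k!$. Isolating $d$ from the resulting inequality produces $d>L(a_kq\delta)^{-1/k}$. The principal obstacle is the careful bookkeeping needed to recover the precise constant $a_k$ from the definition (\ref{24}), which combines $c_k$ with the combinatorial factors from both the divided-difference sum and Part (1)'s estimate of $L$.
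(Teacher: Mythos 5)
Your plan for Part (1) has the right shape (divided differences with equally spaced nodes) but misses the observation that makes it go through cleanly: because $\bar{\mathcal{A}}$ lies inside a \emph{single connected component} of $\mathcal{C}_\delta\cap\mathcal{C}_P$, the bound $|f(x)-P(x)|<\delta$ holds for \emph{every real} $x$ in the span $[n_{h+1},\,n_{h+1}+L]$, not just at the integer points of $\bar{\mathcal{A}}$. The paper therefore evaluates the divided difference at the literal grid $\alpha_j=n_{h+1}+jL/k$, which need not be integers at all, and there is no pigeonhole or worst-case-distribution issue to contend with; your ``technical subtlety'' simply does not arise.

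For Part (2) you propose a congruence/Hensel/CRT analysis of $R=qP$; the paper's route is much more elementary and does not go through root counting at all. It observes that $P$ is the Lagrange interpolant of \emph{any} $k$ points $n_1<\cdots<n_k$ of $\bar{\mathcal{A}}$ (uniqueness of degree-$<k$ interpolation), and that every coefficient of $P$ then has denominator dividing $\prod_{1\le i<j\le k}(n_j-n_i)$, so $q\le\prod(n_j-n_i)\le(n_k-n_1)^{k(k-1)/2}$, i.e.\ any $k$ points of $\bar{\mathcal{A}}$ span at least $q^{2/(k(k-1))}$. Packing $k$-tuples into an interval of length $L$ gives the bound directly. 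Your congruence approach is not obviously wrong as an idea, but the exponent $-2/(k(k-1))$ really does come from the $\binom{k}{2}$ pairwise differences, not from prime-power lifting, and the CRT blow-up you would face for highly composite $q$ would need to be controlled separately --- you yourself flag this as the missing step.

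For Part (3) your starting step (separating $P(n)$ from $\lfloor f(n)\rceil$ by $\ge 1/q$) matches the paper, but the next move --- a single divided difference at $\{n,m_1,\ldots,m_k\}$ --- is where the argument breaks. The ``remaining summands'' in that divided difference are of size $\delta/\prod_i|m_j-m_i||m_j-n|$, and the denominators there can be as small as $O(1)$, so these terms are not $\delta$-sized; without further input they can dominate the $1/q$-term and the estimate collapses in exactly the regime $d<L$ that matters. The paper instead Taylor-expands $g=f-P$ around a point $n_0\in\bar{\mathcal{A}}$ and controls the intermediate derivatives $g^{(j)}(n_0)$ for $1\le j\le k-1$ via Gorny's Landau--Kolmogorov inequality (Lemma~\ref{lem6}), which is the essential nontrivial ingredient and is entirely absent from your sketch; the constant $a_k$ in (\ref{24}) is then extracted by the elementary sum estimate of Lemma~\ref{lem4}, not by ``bookkeeping'' on a raw divided-difference sum.
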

where $a_k$ is defined in (\ref{24}).
\begin{proof} \quad\\
\begin{enumerate}
\item Set $\bar{A}=\lbrace n_h,\dots,n_h+L\rbrace$ and define $g(x)=f(x)-P(x)$. Let $\alpha_j=n_h+j\frac{L}{k}$ for $j\in \lbrace 0,\dots,k\rbrace$. By using (\ref{3}) and the fact that $P(x)$ has degree $<k$ and by the assumption (\ref{1}) that $|f^{(k)}(x)|\asymp \lambda_k$, for some $t\in [\alpha_0,\alpha_k]$, we get
\begin{align*}
\frac{f^{(k)}(t)}{k!}=\frac{g^{(k)}(t)}{k!}&=\sum_{j=0}^{k}\frac{g(\alpha_j)}{\prod_{0\leq j\leq k,i\neq j}(\alpha_j-\alpha_i)}\\
&=\Big( \frac{k}{L}\Big)^k \sum_{j=0}^{k}\frac{g(\alpha_j)}{\prod_{0\leq i\leq k,i\neq j}(j-i)}.
\end{align*} 
We can see by the way we defined $g(x)$, $|g(\alpha_j)|\leq \delta$, the above equation can be rewritten as
\begin{align*}
\frac{\lambda_k}{k!}&\leq \frac{f^{(k)}(t)}{k!}\leq \delta \Big( \frac{k}{L}\Big)^k\sum_{j=0}^{k} \frac{1}{\prod_{0\leq i\leq k,i\neq j}(j-i)}\leq \delta \Big( \frac{k}{L}\Big)^k\sum_{j=0}^{k} \frac{1}{(k-j)!j!}\\
&=\frac{\delta}{k!}\Big( \frac{k}{L}\Big)^k\sum_{j=0}^{k} \binom{k}{j}=\frac{2^k\delta}{k!}\Big( \frac{k}{L}\Big)^k.
\end{align*}
A further note on the third inequality above, we can imagine the product $\prod_{0\leq i\leq k,i\neq j}(j-i)$ as multiplying the differences between $j$ and each element in $\lbrace 1,\dots,k \rbrace\setminus j$, to avoid the product being negative, we could simply separate the product to 2 parts, and get $|\prod_{0\leq i\leq k,i\neq j}(j-i)|=j!(k-j)!$. Also it still might be worth mentioning that $2^k=(1+1)^k=\sum_{j=0}^{k} \binom{k}{j}1^{k-j}1^j$, which gives us the final part above. By rearranging the above equation, we will obtain the required 
bound.

\item Let $n_1<\cdots<n_k$ be $k$ points lying in $\bar{\mathcal{A}}$. Using the formula given by the Lagrange interpolation polynomial that we have deduced in (\ref{2}) and (\ref{3}), since the degree of Lagrange polynomial of the major arc is $<k$, by the uniqueness of the Lagrange polynomial, we can express $P(x)$ as following
\begin{align*}
P(x)=\sum_{j=1}^{k}\Big( \prod_{i=1,i\neq j}^{k}  \frac{x-n_i}{n_j-n_i}\Big) \lfloor f(n_i)\rceil
\end{align*}
and
\begin{align*}
b_k=\sum_{j=1}^{k} \frac{\lfloor f(x_j)\rceil}{\prod_{1\leq i\leq k,i\neq j}(n_j-n_i)}
=\frac{A}{\prod_{1\leq i<j\leq k}(n_j-n_i)}.
\end{align*}
Where $b_k$ is the leading coefficient of $P(x)$ and A is an integer since all the data points we are interpolating here are integral points. If we expand (\ref{2}), we can see that for each coefficient of $P(x)$, it can be written in the form of having some integer as numerator and $\prod_{1\leq i<j\leq k}(n_j-n_i)$ as denominator, so $q$ must be something that is $\leq \prod_{1\leq i<j\leq k}(n_j-n_i)$ since it is the smallest positive integer such that $P\in \frac{1}{q}\mathbb{Z}[x]$. Hence, we have
\begin{align*}
q\leq \prod_{1\leq i<j\leq k}(n_j-n_i)\leq \prod_{1\leq i<j\leq k}(n_k-n_1)=(n_k-n_1)^{\frac{k(k-1)}{2}}.
\end{align*}
so that
\begin{align*}
L\geq n_k-n_1\geq q^{\frac{2}{k(k-1)}}.
\end{align*}
Which implies that for any such $k$ points, we have $\frac{L}{n_k-n_1}\leq Lq^{\frac{-2}{k(k-1)}}$. So it follows that there can only be at most $Lq^{\frac{-2}{k(k-1)}}$ distinct intervals in $\bar{A}$ with each contains $k$ distinct points. Thus
\begin{align*}
|\bar{A}|\leq k(\frac{L}{n_k-n_1}+1)\leq 2kLq^{-\frac{2}{k(k-1)}}.
\end{align*}
\item
For the third part of the proof, let $n\in \mathcal{S}(f,N,\delta)\setminus A$ and $n_0\in \bar{A}$. Without the loss of generality, we can assume that $n>n_0$ and let $m=\lfloor f(n) \rceil$. By the definition of major arc, we have $m\neq P(n)$ since otherwise $m$ should be in the maximal set $\mathcal{A}$. So we can say
\begin{align*}
|P(n)-m|\geq \frac{1}{q}\geq \frac{1}{3q}+2\delta.
\end{align*}
Because $qP(n)-qm\neq 0$ and $qP(n),qm\in \mathbb{Z}$ by definition, so $|q(P(n)-qm)|\geq 1$, also $q\leq (a_k\delta)^{-1}\leq (3\delta)^{-1}$ by assumption. Define $g(x)=f(x)-P(x)$, by the triangle inequality we get
\begin{align*}
|g(n)-g(n_0)|&\geq |g(n)|-|g(n_0)|\geq |g(n)|-\delta=|P(n)-f(n)|-\delta\\
&\geq |P(n)-m|-|f(n)-m|-\delta\geq \frac{1}{3q}+2\delta-\delta-\delta\\
&=\frac{1}{3q}.\label{25}\tag{25}
\end{align*}
Since $f\in C^k[N,2N]$, by Taylor's theorem with Lagrange reminder evaluated at $n_0$, we get
\begin{align*}
g(x)=\sum_{j=0}^{k-1}g^{(j)}(n_0)\frac{(x-n_0)^j}{j!}+R_{k}(x) \quad \text{where}\quad R_k(x)=\frac{g^{(k)}(t)}{k!}(x-n_0)^k.
\end{align*}
for some $t\in [n_0,n]$, thus
\begin{align*}
g(n)-g(n_0)=\sum_{j=1}^{k-1}g^{(j)}(n_0)\frac{d^j}{j!}+g^{(k)}(t)\frac{d^k}{k!}.
\end{align*}
Applying (\ref{1}) and (\ref{22}) on the function $g$ with $M_0=\delta$ and $M_k=c_k\lambda_k$, we get
\begin{align*}
|g(n)-g(n_0)|<4e\delta\sum_{j=1}^{k-1}\Big( \frac{ke}{j}\Big)^j\frac{d^j}{j!}\Big\lbrace k^{j+1}L^{-j}+e^{j-1}\Big( \frac{c_k\lambda_k}{\delta}\Big)^{j/k}\Big\rbrace+c_k\lambda_k\frac{d^k}{k!}. \label{26}\tag{26}
\end{align*}
From the first part of the proof, we know that $\Big( \lambda_k/\delta\Big)^{1/k} \leq 2k/L$, along with the inequality for $j\geq 1$, $j!>e(j/e)^j$ which can be proved by taking logarithms on both side. With all we just mentioned in hand, (\ref{26}) becomes
\begin{align*}
|g(n)-g(n_0)|<{}&4e\delta \sum_{j=1}^{k-1}\Big( \frac{ke}{j}\Big)^j \frac{1}{j!}\lbrace k^{j+1}+c_k^{j/k}e^{-1}(2ek)^j\rbrace (dL^{-1})^j+\frac{(2k)^kc_k\delta d^k}{k!L^k}\\
\leq{}& 4e\delta\sum_{j=1}^{k-1}\Big( \frac{ke}{j}\Big)^je^{-1}\Big( \frac{e}{j}\Big)^j\lbrace k^{j+1}+c_k^{j/k}e^{-1}(2ek)^j\rbrace(dL^{-1})^j\\
&+\frac{(2k)^kc_k\delta d^k}{L^k}e^{-1}\Big( \frac{e}{k}\Big)^k.
\end{align*}
Since for $j\in \lbrace 1,\dots,k-1\rbrace$, $(dL^{-1})^j\leq \max\Big(dL^{-1},(dL^{-1})^{k-1} \Big)$. We can continue to say
\begin{align*}
|g(n)-g(n_0)|<{}&4\delta\Big(dL^{-1}+(dL^{-1})^{k-1}\Big)\sum_{j=1}^{k-1}\Big( \frac{ke}{j}\Big)^{2j}\lbrace k+c_k^{j/k}e^{-1}(2e)^j \rbrace\\
&+(2e)^ke^{-1}c_k\delta(dL^{-1})^k\\
\leq{}& 4\delta(dL^{-1}+(dL^{-1})^{k-1})(k+c_k2^{k-1}e^{k-2})\sum_{j=1}^{k-1}\Big( \frac{ke}{j}\Big)^{2j}\\
&+(2e)^ke^{-1}c_k\delta(dL^{-1})^k.\\ \label{27}\tag{27}
\end{align*}
Notice the fact that $c_k2^{k-1}e^{k-2}\geq 2^{k-1}e^{k-2}$ since $c_k\geq1$, and by using induction for $k\in \mathbb{N}$ and $k\geq 3$, we can show $2^{k-1}>k$, thus $c_k2^{k-1}e^{k-2}>k$. So (\ref{27}) will become
\begin{align*}
|g(n)-g(n_0)|<2&^{k+2}e^{k-2}c_k\delta\Big(dL^{-1}+(dL^{-1})^{k-1} \Big)\sum_{j=1}^{k-1}\Big(\frac{ke}{j} \Big)^{2j}\\
&+(2e)^ke^{-1}c_k\delta(dL^{-1})^k.
\end{align*}
Using the technique we have discussed before, Lemma \ref{lem4}, also remembering that $a_k=36e^{-2}k(2e^3)^kc_k$ from (\ref{24}), we can get
\begin{align*}
|g(n)-g(n_0)|<{}&4e^{-2}k(2e^3)^kc_k\delta\Big( dL^{-1}+(dL^{-1})^{k-1} \Big)\\&+(2e)^ke^{-1}c_k\delta(dL^{-1})^k\\
={}&9^{-1}a_k\delta\Big(dL^{-1}+(dL^{-1})^{k-1}\Big)+(2e)^ke^{-1}c_k\delta(dL^{-1})^k\\
\leq{}& 9^{-1}a_k\delta\Big( dL^{-1}+(dL^{-1})^{k-1}+(dL^{-1})^k\Big).
\end{align*}
With the inequality we get from (\ref{25}), we have
\begin{align*}
&q^{-1}<3^{-1}a_k\delta\Big( dL^{-1}+(dL^{-1})^{k-1}+(dL^{-1})^k\Big)\\
&1<qa_k\delta\max\Big(dL^{-1},(dL^{-1})^{k-1},(dL^{-1})^k \Big).
\end{align*}
which means
\begin{align*}
dL^{-1}>\min \Big\lbrace (a_kq\delta)^{-1},(a_kq\delta)^{-\frac{1}{k-1}},(a_kq\delta)^{-\frac{1}{k}}\Big\rbrace.
\end{align*}
Since by the assumption, we have $a_k\delta q\leq 1$, so 
\begin{align*}
\min \Big\lbrace (a_kq\delta)^{-1},(a_kq\delta)^{-\frac{1}{k-1}},(a_kq\delta)^{-\frac{1}{k}}\Big\rbrace=(a_kq\delta)^{-\frac{1}{k}}.
\end{align*}
Which implies the statement of the lemma. \qedhere
\end{enumerate}
\end{proof}
A reasonable question to ask right now is what is the total number of points in $\mathcal{R}(f,N,\delta)$ that can be contained in some major arc associated to $f^{(k)}$? Remember that by the definition, given different Lagrange polynomials, there could be many major arcs associated to $f^{(k)}$. We now are in the position to estimate such contribution of points coming from major arcs. Proof of the lemma is changed from the original paper \citep{huxley1995points} and the book \citep[Lemma 5.14]{huxley} due to the oversight we have mentioned in the introduction, if reader is interested, have a look at the original proof which can be found both in the paper and the book would be a very good idea.
\begin{lem} \label{lem8}
Let $R_0$ be the contribution of points coming from the major arcs associated to $f^{(k)}$, such that $R_0\subset\mathcal{R}(f,N,\delta)$. Then
\begin{itemize}
\item If the set of points in each proper major arc with denominator $\leq (a_k\delta)^{-1}$ can be covered with an interval of length $L_j(a_kq_j\delta)^{-1/k}$ which is disjoint from all such covers for other major arcs, where $L_j$ and $q_j$ are the length and denominator associated with their proper major arcs:
\begin{align*}
R_0\leq b_kN\delta^{\frac{2}{k(k-1)}}+8k^3\Big( \frac{\delta}{\lambda_k}\Big)^{1/k}+10e^3k^2.
\end{align*}
\item Otherwise:
\begin{align*}
R_0\leq \frac{3}{2} b_kN\delta^{\frac{2}{k(k-1)}}+16k^3\Big( \frac{\delta}{\lambda_k}\Big)^{1/k}+10e^3k^2
\end{align*}
where $a_k,b_k$ are defined in (\ref{24}).

\end{itemize}
\end{lem}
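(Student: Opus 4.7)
The plan is to decompose $R_0$ according to the denominator $q$ of each proper major arc, writing $R_0 = R_0^{\mathrm{small}} + R_0^{\mathrm{large}}$, where $R_0^{\mathrm{small}}$ collects contributions from proper major arcs with $q \leq (a_k\delta)^{-1}$ and $R_0^{\mathrm{large}}$ collects contributions from those with $q > (a_k\delta)^{-1}$. Each piece is then controlled by a different part of Lemma \ref{lem7}.

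For $R_0^{\mathrm{large}}$ I would apply Lemma \ref{lem7}(2) directly: each such $\bar{\mathcal{A}}$ satisfies $|\bar{\mathcal{A}}| \leq 2kLq^{-2/(k(k-1))}$, and since $q > (a_k\delta)^{-1}$ the factor $q^{-2/(k(k-1))}$ already supplies the $\delta^{2/(k(k-1))}$ factor needed for the main term after summing over the proper major arcs (which are disjoint subsets of $[N,2N]$, so their total length does not exceed $N$). The length bound $L \leq 2k(\delta/\lambda_k)^{1/k}$ from Lemma \ref{lem7}(1) caps the size of any single proper major arc and produces the $k^3(\delta/\lambda_k)^{1/k}$ contribution via the at most one "boundary" arc at each end of $[N,2N]$.

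For $R_0^{\mathrm{small}}$ I would exploit the buffer-zone property of Lemma \ref{lem7}(3): around each proper major arc $\bar{\mathcal{A}}_j$ of length $L_j$ and denominator $q_j \leq (a_k\delta)^{-1}$ there is an interval $\mathcal{I}_j$ of length $L_j(a_kq_j\delta)^{-1/k}$ that contains $\bar{\mathcal{A}}_j$ but no point of $\mathcal{S}(f,N,\delta)\setminus\mathcal{A}_j$. In the first bullet, the hypothesis is precisely that the $\mathcal{I}_j$ are pairwise disjoint, so they form a legitimate cover to which Lemma \ref{lem3} can be applied. The relevant density $|\bar{\mathcal{A}}_j|/L(\mathcal{I}_j)$ is bounded by $2kq_j^{-2/(k(k-1))}(a_kq_j\delta)^{1/k}$ by Lemma \ref{lem7}(2); the exponent on $q_j$ works out to $(k-3)/(k(k-1))\geq 0$ for $k\geq 3$, so the density is maximized at $q_j=(a_k\delta)^{-1}$ and reduces to a constant multiple of $c_k^{2/(k(k-1))}\delta^{2/(k(k-1))}$, yielding the $b_kN\delta^{2/(k(k-1))}$ main term. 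The $2\max_{\mathcal{I}}|S\cap\mathcal{I}|$ term of Lemma \ref{lem3}, combined with Lemma \ref{lem7}(1), contributes the $8k^3(\delta/\lambda_k)^{1/k}$ piece, and the residual $10e^3k^2$ is absorbed from the finite series estimates (handled essentially as in Lemma \ref{lem4}) plus the boundary arcs.

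The hard part will be the second bullet, where the $\mathcal{I}_j$ are allowed to overlap across different proper major arcs and Lemma \ref{lem3} cannot be invoked in the same clean way. The plan is to select a subfamily of the $\mathcal{I}_j$'s that is pairwise disjoint and captures enough of $R_0^{\mathrm{small}}$ that the original argument still goes through on it; the overlap is then absorbed into a multiplicative loss of $3/2$ on the leading coefficient $b_k$ (since any point can lie in at most a bounded number of $\mathcal{I}_j$'s before the buffer-zone distance forces separation), while the doubling of $8k^3$ to $16k^3$ reflects the fact that the complement of this subfamily must be estimated by a second, parallel application of Lemma \ref{lem3}. This is also where the oversight mentioned in the introduction matters: one has to verify that two proper major arcs cannot interleave, so that the chosen subfamily is genuinely well-defined and the buffer-zone property from Lemma \ref{lem7}(3) still controls the exterior points.
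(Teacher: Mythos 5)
Your proposal follows the paper's proof almost exactly: the split of major arcs by denominator at the threshold $(a_k\delta)^{-1}$, Lemma~\ref{lem7}(2) for the large-denominator contribution, buffer intervals from Lemma~\ref{lem7}(3) fed into the covering Lemma~\ref{lem3} for the small-denominator contribution, and the disjointness of proper major arcs as the technical repair of the oversight, with a disjoint subfamily of the $\mathcal{I}_j$'s chosen when they overlap (the paper's subfamily is simply the alternating odd/even split of the ordered proper major arcs, after which two parallel applications of Lemma~\ref{lem3} yield the doubled $16k^3$ term). One small correction to your accounting: the $3/2$ on $b_k$ is not a bounded-overlap-multiplicity phenomenon for the $\mathcal{I}_j$'s; it arises because $kR_1$ already contributes $\tfrac{1}{2}b_kN\delta^{2/(k(k-1))}$ while the odd/even split doubles the contribution of $kR_2$ from $\tfrac{1}{2}b_k$ to $b_k$, so that $k(R_1+R_2)$ gives $\tfrac{1}{2}+1=\tfrac{3}{2}$ in the second bullet and $\tfrac{1}{2}+\tfrac{1}{2}=1$ in the first.
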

\begin{proof}
Let $\mathcal{M}_0$ be the set of major arcs and $Q_k>0$ be the real number defined by
\begin{align*}
Q_k=(a_k\delta)^{-1}
\end{align*}
where $a_k$ is defined in (\ref{24}). Write $\mathcal{M}_0=\mathcal{M}_1\cup\mathcal{M}_2$ where $\mathcal{M}_1$ is the set of major arcs with denominator $>Q_k$ and $\mathcal{M}_2=\mathcal{M}_0\setminus \mathcal{M}_1$. For $i\in \lbrace 1,2\rbrace$, let 
\begin{align*}
S_i=\bigcup_{\mathcal{A}\in \mathcal{M}_i}\bar{\mathcal{A}} \quad \text{and} \quad R_i=|S_i|.
\end{align*}
We can see that $R_i$ can be understood as the total number of points contained in the proper major arcs inside $\mathcal{M}_i$. From Lemma \ref{lem5}, we know that there are at most $k$ connected components. By definition, the proper major arc is the component that intersects the most integral points, so we can deduce $R_0\leq k(R_1+R_2)$.
\begin{itemize}
\item \textbf{Estimate of $R_1$}. This estimate is rendered trivial by the second part of the result of Lemma \ref{lem7}. Choose a proper major arc in $\mathcal{M}_1$ with $k$ consecutive points $n_1,\dots,n_k$ such that $n_k-n_1$ is the smallest. By the Lemma, we have $n_k-n_1\geq q^{\frac{2}{k(k-1)}}>Q_k^{\frac{2}{k(k-1)}}$, so that
\begin{align*}
R_1\leq k(NQ_k^{-\frac{2}{k(k-1)}}+1)<{}&10e^3k\Big( N(c_k\delta)^{\frac{2}{k(k-1)}}+1\Big)\\
={}&b_k(2k)^{-1}N\delta^{\frac{2}{k(k-1)}}+10e^3k.
\end{align*}
\item \textbf{Estimate of $R_2$}. Since $q$ is a positive integer, we may assume that $Q_k\geq 1$, otherwise $S_2=\emptyset$ which is just trivial. Let $\bar{\mathcal{A}_1},\dots,\bar{\mathcal{A}}_J$ be the ordered sequence of proper major arcs with denominator $q_j\leq Q_k$. For each proper major arc $\bar{\mathcal{A}}_J$, we set $n_j$ and $L_j$ its first point and length, and define
\begin{align*}
d_j=L_j(a_kq_j\delta)^{-1/k}
\end{align*}
and $\mathcal{I}_j=[n_j,n_j+d_j]$.

To continue, we first need to justify the fact that two proper major arcs with their denominators $q_j,q_i\leq Q_k$ do not intersect each other. Notice that $\bar{\mathcal{A}}_i \nsubseteq \mathcal{A}_j$, this is indeed true by the definition of the major arc and the uniqueness of the Lagrange interpolation. Since a proper major arc must contain more than $k+1$ points, the uniqueness of the Lagrange interpolation tells us there is only one such polynomial with degree $<k$, if $\bar{\mathcal{A}}_i \subset \mathcal{A}_j$, it will result in the two Lagrange interpolation polynomials associate to the two major arcs equal being $P_j=P_i$. So there must exist a point $a$ such that $a\in \bar{\mathcal{A}}_i$ and $a\in \mathcal{S}(f,N,\delta)\setminus \mathcal{A}_j$, by Lemma \ref{lem7} we have $\operatorname{dist}(a, \bar{\mathcal{A}}_j)>d_j $. Similarly, $\exists$ $b\in \bar{\mathcal{A}}_j$ such that $\operatorname{dist} (b,\bar{\mathcal{A}}_i)>d_i$. Now, let's assume two proper major arcs with their denominators $q_j,q_i\leq Q_k$ intersect each other. Without loss of generality, let's assume that the first point of one proper major arc intersects the last point of the other proper major arc. As showing in the graph below.

\begin{figure}[H]
\centering
\includegraphics[scale=0.45]{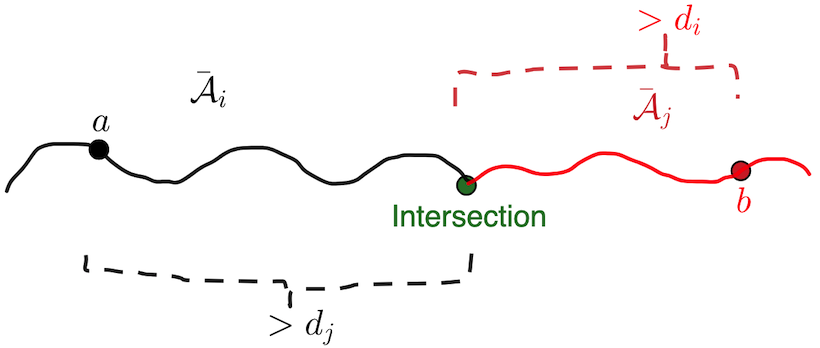}
\caption{Intersection of two proper major arcs}
\label{fig2}
\end{figure}

In the case of intersection, since $q_i,q_j\leq Q_k$, we have 
\begin{align*}
d_j\geq L_j(a_kQ_k\delta)^{-1/k}=L_j \quad \text{and} \quad d_i\geq L_i(a_kQ_k\delta)^{-1/k}=L_i
\end{align*}
And this will give us a contradiction as follows
\begin{align*}
L_i>d_j\geq L_j \quad \text{and} \quad L_j>d_i\geq L_i
\end{align*}
So two proper major arcs in the setting we are in cannot have an intersection.
\begin{case} Distance between $n_j$ and $n_{j+1}$ $>d_j$.

We can see that $\mathcal{I}_j$ contains $\bar{\mathcal{A}_j}$ and does not contain $\bar{\mathcal{A}}_{j+1}$, this is indeed true, since $q_j\leq Q_k$, we have $d_j\geq L_j(a_kQ_k\delta)^{-1/k}=L_j$. Also, by assumption, $\mathcal{I}_j\cap \bar{\mathcal{A}}_{j+1}=\emptyset$.

Therefore the intervals $\mathcal{I}_j$ are pairwise distinct. Using Lemma {\ref{lem3}} with $S=S_2$, we get
\begin{align*}
R_2\leq N\max_{j}\frac{|\bar{\mathcal{A}}_j|}{d_j}+2\max_j|\bar{\mathcal{A}}_j| \label{28} \tag{28} .
\end{align*}

Now by second part of Lemma \ref{lem7} and the choice of $d_j$, we have
\begin{align*}
\frac{|\bar{\mathcal{A}}_j|}{d_j}\leq 2kL_jq_j^{-\frac{2}{k(k-1)}}L_j^{-1}(a_kq_j\delta)^{1/k}=2k(a_k\delta)^{1/k}q_j^{\frac{k-3}{k(k-1)}}
\end{align*}
and since $q_j\leq Q_k=(a_k\delta)^{-1}$ and $k\geq 3$, we obtain
\begin{align*}
\frac{|\bar{\mathcal{A}}_j|}{d_j}\leq 2k(a_k\delta)^{\frac{2}{k(k-1)}}<10e^3k(c_k\delta)^{\frac{2}{k(k-1)}}=b_k(2k)^{-1}\delta^{\frac{2}{k(k-1)}}
\end{align*}
for $j\in \lbrace 1,\dots,J\rbrace$, and therefore by part 1 and 2 of Lemma \ref{lem7}
\begin{align*}
R_2\leq b_k(2k)^{-1}N\delta^{\frac{2}{k(k-1)}}+4k\max_{j}L_j\leq b_k(2k)^{-1}N\delta^{\frac{2}{k(k-1)}}+8k^2\Big( \frac{\delta}{\lambda_k}\Big)^{1/k}.
\end{align*}
Combining both estimates and the fact that $R_0\leq k(R_1+R_2)$, we get the result.
\end{case}
\begin{case} Distance between $n_j$ and $n_{j+1}$ $\leq d_j$.

Here we can see that the intersection between $\mathcal{I}_j\cap \bar{\mathcal{A}}_{j+1}\neq \emptyset$, the graph below should give a better idea.
\begin{figure}[H]
\centering
\includegraphics[scale=0.45]{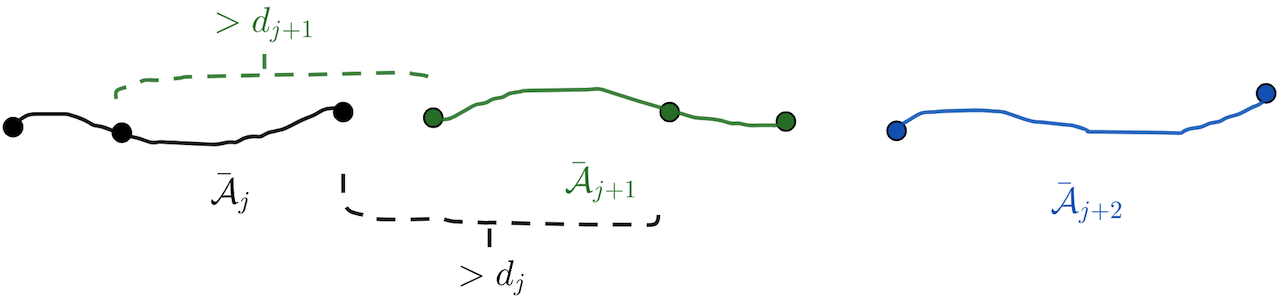}
\caption{Proper major arcs}
\label{fig3}
\end{figure}
Since we have proved that proper major arcs are disjoint from each other. By part three of Lemma \ref{lem7}, there exist a point in $\bar{\mathcal{A}}_{j+1}$ with distance $>d_j$ away from any point in $\bar{\mathcal{A}}_j$. We can see that $\mathcal{I}_j$ must be disjoint from $\bar{\mathcal{A}}_{j+2}$. Also $\bar{\mathcal{A}}_j \subset \mathcal{I}_j$ as in $Case$ 1. Let
\begin{align*}
S_3=\bigcup_{i \text{ is odd}}\bar{\mathcal{A}}_i \quad \text{and}\quad S_4=\bigcup_{i \text{ is even}}\bar{\mathcal{A}}_i \quad \text{and}\quad R_j=|S_j|,\quad j\in \lbrace 3,4\rbrace.
\end{align*}
We can see $S_3 \cup S_4=S_2$ and $R_2=R_3+R_4$. By the same argument as $Case$ 1, we have
\begin{align*}
R_3,R_4\leq b_k(2k)^{-1}N\delta^{\frac{2}{k-1}}+8k^2\Big( \frac{\delta}{\lambda_k}\Big)^{1/k}.
\end{align*}
So
\begin{align*}
R_2\leq b_k k^{-1}N\delta^{\frac{2}{k(k-1)}}+16k^2\Big( \frac{\delta}{\lambda_k}\Big)^{1/k}.
\end{align*}
Combining both estimate, we get
\begin{align*}
R_0\leq k(R_1+R_2)\leq \frac{3}{2}b_kN\delta^{\frac{2}{k(k-1)}}+16k^3\Big( \frac{\delta}{\lambda_k}\Big)^{1/k}+10e^3k^2. \quad \qedhere
\end{align*}
\end{case}
\end{itemize}
\end{proof}
\subsection{The Proof of the Theorem of Huxley and Sargos} 
From the end of the previous section, we get an estimation of the contribution of points of $\mathcal{S}(f,N,\delta)$ which come from major arcs. The next natural step to progress our proof of the theorem of Huxley and Sargos is to estimate the contribution of those points which do not come from major arcs. The next Lemma \citep[Lemma 5.15]{huxley} we will state provides such an estimate, and its proof is similar to the proof of the $k$th derivative test. Also, again to avoid the trivial case, we may assume $\mathcal{R}(f,N,\delta)\geq k+1$.
\begin{lem} \label{lem9}
Let $N\leq n_0<\cdots<n_k\leq 2N$ be $k+1$ points of $\mathcal{S}(f,N,\delta)$, which do not lie on the same algebraic curve of degree $<k$. Then
\begin{align*}
n_k-n_0>\min\Big( (c_k\lambda_k)^{-\frac{2}{k(k+1)}}, 2^{-1}\delta^{-\frac{2}{k(k-1)}}\Big).
\end{align*}
\end{lem}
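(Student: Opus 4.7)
The plan is to adapt the proof of the $k$th derivative test (Theorem \ref{thm5}), removing its restrictive hypothesis $(k+1)!\delta<\lambda_k$ and replacing it with the present geometric assumption that the $k+1$ integer points $(n_j,\lfloor f(n_j)\rceil)$ do not lie on a common algebraic curve of degree $<k$. Writing $m_j=\lfloor f(n_j)\rceil$, this assumption forces the degree-$\leq k$ Lagrange polynomial $P'$ interpolating the integer points $(n_j,m_j)$ to have degree exactly $k$, since otherwise the $(n_j,m_j)$ would lie on the curve $y=P'(x)$ of degree $<k$. Writing its leading coefficient as $b'_k=A_k/D_k$ in the notation of (\ref{3}), with $D_k=\prod_{0\leq i<j\leq k}(n_j-n_i)>0$ and $A_k\in\mathbb{Z}$, we therefore have $|A_k|\geq 1$.

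I would then run the divided-differences setup of Theorem \ref{thm5} in parallel: let $\mathcal{P}$ be the Lagrange polynomial interpolating $(n_j,f(n_j))=(n_j,m_j+\delta_j)$ with $|\delta_j|<\delta$, whose leading coefficient satisfies $b_k=f^{(k)}(t)/k!$ for some $t\in[n_0,n_k]$ by Theorem \ref{thm4}. Multiplying the identity $b_k-b'_k=\sum_j\delta_j/\prod_{i\neq j}(n_j-n_i)$ through by $k!D_k$ yields
\begin{equation*}
k!A_k=x+y,\qquad x=D_kf^{(k)}(t),\qquad y=-k!D_k\sum_{j=0}^{k}\frac{\delta_j}{\prod_{i\neq j}(n_j-n_i)}.
\end{equation*}
Since $|k!A_k|\geq k!$, the inequality $|x|+|y|\geq k!$ forces at least one of $|x|,|y|$ to exceed $k!/2$, and the two resulting cases will produce the two branches of the claimed minimum.

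In the first case ($|x|>k!/2$), the standard estimates $|x|\leq c_k\lambda_kD_k$ and $D_k\leq(n_k-n_0)^{k(k+1)/2}$ rearrange to $n_k-n_0>(k!/2)^{2/(k(k+1))}(c_k\lambda_k)^{-2/(k(k+1))}$, and the prefactor is $\geq 1$ for $k\geq 2$, giving the first branch. In the second case ($|y|>k!/2$), the crux—and the main obstacle of the whole argument—is to replace the crude bound $\prod_{i\neq j}|n_j-n_i|\geq 1$ used in Theorem \ref{thm5} (which would yield the wrong exponent $2/(k(k+1))$) by a sharper cancellation against $D_k$: the ratio $D_k/\prod_{i\neq j}|n_j-n_i|$ collapses to $\prod_{0\leq i<i'\leq k,\,i,i'\neq j}(n_{i'}-n_i)$, a product over only the $\binom{k}{2}=k(k-1)/2$ pairs not involving $j$, with each factor at most $n_k-n_0$. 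This yields $|y|<(k+1)!\,\delta\,(n_k-n_0)^{k(k-1)/2}$, and rearranging gives $n_k-n_0>(2(k+1)\delta)^{-2/(k(k-1))}$. A direct check that $2(k+1)\leq 2^{k(k-1)/2}$ for $k\geq 3$ (with equality at $k=3$) absorbs the constant into the required lower bound $2^{-1}\delta^{-2/(k(k-1))}$, and combining the two cases yields the stated minimum.
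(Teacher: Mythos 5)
Your proposal reproduces the paper's own argument in all essentials: the identity $k!A_k = D_k f^{(k)}(t) - k!D_k\sum_j \delta_j/\prod_{i\neq j}(n_j-n_i)$, the observation that the non-degeneracy hypothesis forces $b'_k\neq 0$ and hence $|A_k|\geq 1$, the sharp cancellation $D_k/\prod_{i\neq j}|n_j-n_i| \leq (n_k-n_0)^{k(k-1)/2}$ that produces the exponent $2/(k(k-1))$ rather than $2/(k(k+1))$, and the two-case split yielding the minimum. Your explicit verification that the prefactors $(k!/2)^{2/(k(k+1))}$ and $(2(k+1))^{-2/(k(k-1))}$ can be simplified to $1$ and $2^{-1}$ for $k\geq 3$ is left implicit in the paper but is exactly what its final line requires.
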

\begin{proof}
By definition, there exist integers $m_0,\dots,m_k$ and real numbers $\delta_0,\dots,\delta_k$ such that
\begin{align*}
f(n_j)=m_j+\delta_j
\end{align*}
with $|\delta_j|<\delta$ for all $j\in \lbrace 0,\dots,k\rbrace$. We define $D_k$ as follows
\begin{align*}
D_k=\prod_{0\leq h<i\leq k}(n_i-n_h)>0.
\end{align*}
and let $P=b_kX^k+\cdots+b_0$ be the Lagrange polynomial interpolating the points $(n_j,m_j)$. Then we have
\begin{align*}
b_k=\sum_{j=0}^{k}\frac{m_j}{\prod_{0\leq i\leq k,i\neq j}(n_j-n_i)}=\frac{A_k}{D_k}
\end{align*}
where $A_k\in \mathbb{Z}$, since the points we are interpolating are integral points. Reasoning exactly in the same way as in the proof of Theorem \ref{thm5}, we get
\begin{align*}
k!A_k=D_kf^{(k)}(t)-k!D_k\sum_{j=0}^{k}\frac{\delta_j}{\prod_{0\leq i\leq k,i\neq j}(n_j-n_i)}.
\end{align*}
Since the points we are interpolating do not lie on the same algebraic curve of degree $<k$, we have $b_k\neq 0$. Since $A_k$ is an integer, we have $|A_k|\geq 1$, and using $|\delta_j|<\delta$ and condition (\ref{1}) we get
\begin{align*}
k!\leq k!|A_k|&<c_k\lambda_k D_k+k!\delta D_k\sum_{j=0}^{k}\frac{1}{\prod_{0\leq i\leq k,i\neq j}|n_j-n_i|}\\
&=c_k\lambda_k D_k+k!\delta\sum_{j=0}^{k}\prod_{\substack{0\leq h<i\leq k\\ h\neq j,i\neq j}}(n_i-n_h)\\
&\leq c_k\lambda_k(n_k-n_0)^{\frac{k(k+1)}{2}}+(k+1)!\delta(n_k-n_0)^{\frac{k(k-1)}{2}}.
\end{align*}
which suggests that either
\begin{align*}
k!\leq 2c_k\lambda_k(n_k-n_0)^{\frac{k(k+1)}{2}} \quad \text{or} \quad k!\leq 2(k+1)!\delta(n_k-n_0)^{\frac{k(k-1)}{2}}
\end{align*}
and this implies 
\begin{align*}
n_k-n_0>\min\Big( \Big(\frac{k!}{2} \Big)^{\frac{2}{k(k+1)}}(c_k\lambda_k)^{-\frac{2}{k(k+1)}}, (2k+2)^{-\frac{2}{k(k-1)}}\delta^{-\frac{2}{k(k-1)}}  \Big) \quad \qedhere
\end{align*}
\end{proof}
Finally, with all the techniques and results in hand, we are in the position to give a conclusion to this chapter, which is the proof of the theorem of Huxley and Sargos Theorem \ref{thm1}. Beside changes due to the oversight, the general spirit of the proof stays true to the original. The original proof was given both in the book by Bordell\`es \citep[p. 383]{huxley} and the paper by Huxley \citep{huxley1995points}.
\begin{proof}
Let $S_0$ be the set of points of $\mathcal{S}(f,N,\delta)$ coming from the major arcs and $T_0=\mathcal{S}(f,N,\delta)\setminus S_0$. By Lemma \ref{lem8}, we have
\begin{align*}
|S_0|=R_0\leq b_kN\delta^{\frac{2}{k(k-1)}}+8k^3\Big( \frac{\delta}{\lambda_k}\Big)^{1/k}+10e^3k^2
\end{align*}
or depending on the condition we discussed before
\begin{align*}
|S_0|=R_0\leq \frac{3}{2} b_kN\delta^{\frac{2}{k(k-1)}}+16k^3\Big( \frac{\delta}{\lambda_k}\Big)^{1/k}+10e^3k^2
\end{align*}
where $b_k$ is given in (\ref{24}). Now let $G=\lbrace n_0,\dots,n_{k^2}\rbrace$ be a set of $k^2+1$ consecutive ordered points of $T_0$, again here we assume the size of $T_0$ is larger than $k^2+1$, otherwise the result is trivial. Since $G$ is not contained in any major arc, so we can find an integer $j\in\lbrace k,\dots,k^2 \rbrace$ such that $j+1$ points $(n_i,m_i)$ do not lie on the same algebraic curve of degree $<k$. By Lemma \ref{lem9}, we have
\begin{align*}
n_{k^2}-n_0\geq n_j-n_0\geq n_k-n_0> \min \Big(  (c_k\lambda_k)^{-\frac{2}{k(k+1)}},2^{-1}\delta^{-\frac{2}{k(k-1)}}  \Big)
\end{align*}
which means
\begin{align*}
|T_0|\leq 2k^2\Big(N(c_k\lambda_k)^{\frac{2}{k(k+1)}}+ 2N\delta^{\frac{2}{k(k-1)}}+1   \Big)
\end{align*}
and the proof is completed with the fact that $\mathcal{R}(f,N,\delta)\leq |T_0|+|S_0|$.
\end{proof}
\section{Optimality of The Methods}
\subsection{Integral Points Around Polynomials} 
In this chapter, we will discuss the optimality of Theorem \ref{thm1}, that is whether the bound
\begin{align*}
\mathcal{R}(f,N,\delta)\ll N\lambda_k^{\frac{2}{k(k+1)}}+N\delta^{\frac{2}{k(k-1)}}+\big( \frac{\delta}{\lambda_k}\big)^\frac{1}{k}+1 \label{29}\tag{29}
\end{align*}
could be improved with some extra conditions or if the bound is false without certain conditions. A good place to start will be putting the theorem to the test where polynomials are concerned. First, let's discuss some general scenario where a better bound can be obtained.
\begin{prop}
Let $\delta\in [0,\frac{1}{4}]$, $N,k\in \mathbb{Z}_{\geq 1}$ and $P=\alpha_kX^k+\dots+\alpha_0\in \mathbb{R}[x]$. If there exist $a\in \mathbb{Z}$ and $q\in [1,N^k]\cap \mathbb{Z}$ such that the greatest common divisor $(a,q)=1$ and $|\alpha_k-\frac{a}{q}|\leq \frac{1}{q^2}$, then for all $\epsilon>0$
\begin{align*}
\mathcal{R}(P,N,\delta) \ll Nq^{-1/k}+N\delta^{\frac{2}{k(k+1)}}+N^\epsilon
\end{align*}
the term $N^\epsilon$ being replaced by $1$ if $k=1$.
\end{prop}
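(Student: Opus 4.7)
The plan is to refine Theorem \ref{thm1} by exploiting the rational approximation $|\alpha_k - a/q| \le q^{-2}$: applied naively to $P$ with $\lambda_k = k!|\alpha_k|$ and $c_k = 1$, Theorem \ref{thm1} yields only $N|\alpha_k|^{2/(k(k+1))}$ in place of the desired $Nq^{-1/k}$, so the arithmetic input has to be brought in by a separate mechanism. I would proceed by induction on $k$.

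For the base case $k=1$, write $P(n) = \alpha_1 n + \alpha_0$. As $n$ advances by $1$, $\{\alpha_1 n + \alpha_0\}$ advances by $\{\alpha_1\}$, and since $\alpha_1 = a/q + O(q^{-2})$ with $q \le N$, the cumulative deviation from the rational model $(a/q)n + \alpha_0$ across the whole interval $[N,2N]$ is $O(N \cdot q^{-2}) = O(1)$. Splitting $[N,2N]$ into $\lceil N/q\rceil$ blocks of $q$ consecutive integers and using a three-gap style analysis on each block yields $\mathcal{R}(P,N,\delta) \ll N\delta + N/q + 1$, which is the claimed bound with $N^\epsilon$ sharpened to $1$.

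For the inductive step ($k \ge 2$), I would apply the reduction principle (Lemma \ref{lem1}) with an optimization parameter $A$ to obtain
\begin{align*}
\mathcal{R}(P,N,\delta) \le \frac{N}{A} + \sum_{a=1}^{A}\mathcal{R}(\Delta_a P, N, 2\delta) + 1.
\end{align*}
Now $\Delta_aP$ is a polynomial of degree $k-1$ with leading coefficient $ka\,\alpha_k$, whose effective Diophantine denominator is $q_a := q/\gcd(q,ka)$, since $ka\,\alpha_k$ is approximated by $ka\cdot a/q$ with error $\le ka/q^2$. Feeding this into the induction hypothesis at level $k-1$, each summand is bounded by $(N/q_a)^{1/(k-1)}$ plus an improved $\delta$-term plus $N^\epsilon$; summing over $a\le A$ absorbs the divisor-type averages $\sum_{a\le A}\gcd(q,ka)^{1/(k-1)}$ into the $N^\epsilon$ factor, and choosing $A$ via Lemma \ref{lem2} to balance $N/A$ against the main sum should produce $Nq^{-1/k}$. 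The $\delta$-contribution sharpens from $(2\delta)^{2/((k-1)k)}$ at level $k-1$ to $\delta^{2/(k(k+1))}$ at level $k$ by the same optimization of $A$.

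The main obstacle is the arithmetic bookkeeping through each reduction: the effective denominator collapses precisely when $\gcd(q,ka)$ is large, so the naive worst-case argument loses the Diophantine gain completely. The essential technical lemma is an average bound of the shape $\sum_{a \le A}\gcd(q,ka)^{s} \ll A \cdot q^{o(1)}$ (standard, via Dirichlet convolution), and this averaging is exactly the mechanism that produces the $N^\epsilon$ factor in the stated bound. The consistent choice of the optimization parameter $A$ across $k-1$ nested reductions is delicate, and it is this nesting that converts the naive exponent $1/(k-1)$ appearing in the inductive hypothesis into the claimed sharper exponent $1/k$ in $Nq^{-1/k}$.
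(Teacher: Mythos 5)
The paper states this Proposition without giving a proof (it is quoted from Bordell\`es), so there is no in-paper argument to compare against; the following reviews your proposal on its own terms.

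Your base case ($k=1$) is essentially correct, though the claim that the cumulative drift across $[N,2N]$ is ``$O(N\cdot q^{-2})=O(1)$'' is false when $q<\sqrt{N}$, which the hypothesis $q\le N$ permits. What actually saves the argument is the block-wise accounting: within a single block of $q$ consecutive integers the drift is $O(q\cdot q^{-2})=O(1/q)$, comparable to the spacing $1/q$, so the per-block count stays $O(q\delta+1)$ and the conclusion $\ll N\delta + N/q + 1$ follows as you state.

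The inductive step has two substantive gaps which together defeat the approach. First, the Diophantine hypothesis does not transfer to $\Delta_h P$ (writing $h$ for the shift in Lemma~\ref{lem1} to avoid the clash with the numerator $a$). The leading coefficient is $kh\alpha_k$, and
\begin{align*}
\Big|kh\alpha_k - \frac{kha}{q}\Big| \le \frac{kh}{q^2}.
\end{align*}
In lowest terms $kha/q = a'/q'$ with $q'=q/\gcd(q,kh)$ (using $(a,q)=1$), so for the induction hypothesis at level $k-1$ to apply you need $kh/q^2\le 1/q'^2 = \gcd(q,kh)^2/q^2$, i.e.\ $kh\le\gcd(q,kh)^2$. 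This fails outright whenever $\gcd(q,kh)=1$ and $kh>1$ (e.g.\ $q$ a large prime with $q\nmid kh$), so the inductive hypothesis cannot be invoked with an effective denominator $\asymp q$. Second, even granting the optimistic assumption $q_h\asymp q$ for all $h\le A$, the optimization goes the wrong way: since $q_h\le q$ always,
\begin{align*}
\frac{N}{A}+\sum_{h\le A}Nq_h^{-1/(k-1)} \ge \frac{N}{A}+ANq^{-1/(k-1)} \ge 2Nq^{-1/(2(k-1))},
\end{align*}
and $\tfrac{1}{2(k-1)}<\tfrac1k$ for all $k\ge3$, so the resulting exponent is strictly weaker than the required $Nq^{-1/k}$. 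Running the reduction all the way down to degree $1$ with optimal parameters $A_1,\dots,A_{k-1}$ produces exactly the same exponent $1/(2(k-1))$ (the Srinivasan-type balance gives $A_j=A_1^j$ and $A_1=q^{1/(2(k-1))}$). Your assertion that ``$k-1$ nested reductions convert the exponent $1/(k-1)$ into $1/k$'' is therefore backwards: each application of Lemma~\ref{lem1} pays for the $N/A$ term, so nesting weakens the exponent rather than sharpening it. The $\delta$-term has the same defect once $k\ge4$, since the balance yields $N\delta^{1/(k(k-1))}$ and $\tfrac{1}{k(k-1)}<\tfrac{2}{k(k+1)}$ there. In short, the $q^{-1/k}$ gain cannot be extracted from Lemma~\ref{lem1} plus induction alone; a correct proof must feed the arithmetic of $q$ in through a mechanism the reduction principle does not see, for instance by counting residues $r\pmod q$ with $ar^k$ lying in a short interval modulo $q$, or by a divided-difference argument forcing a divisibility relation between $q$ and $\prod_{i<j}(n_j-n_i)$.
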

If $\alpha_k\in (0,1]$, then we can take $a=1$ and $q=\lfloor\alpha_k^{-1}\rfloor$. So if we further assume that $\alpha_k\in [N^{-k},1]$, then we derive
\begin{align*}
\mathcal{R}(P,N,\delta) \ll N\alpha_k^{1/k}+N\delta^{\frac{2}{k(k+1)}}+N^\epsilon. \label{30} \tag{30}
\end{align*}
If $\alpha_k>1$, then the bound (\ref{30}) is trivial provided that $\alpha_k\geq N^{-k}$. Now if $\alpha_k<N^{-k}$, then $N<\alpha^{-1/k}$, and thus we can derive the following corollary.
\begin{cor}
Let $\delta\in [0,\frac{1}{4}]$, $N,k\in \mathbb{Z}_{\geq 1}$ and $P=\alpha_kX^k+\alpha_{k-1}X^{k-1}+\dots+\alpha_0\in \mathbb{R}[x]$ with $\alpha_k>0$. Then, for all $\epsilon>0$,
\begin{align*}
\mathcal{R}(P,N,\delta)\ll N\alpha_k^{1/k}+N\delta^{\frac{2}{k(k+1)}}+\alpha_k^{-1/k}+N^\epsilon,
\end{align*}
the term $N^\epsilon$ being replaced by $1$ if $k=1$. 
\end{cor}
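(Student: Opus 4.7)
The plan is to dispose of the corollary by a short case analysis on the size of $\alpha_k$, invoking the preceding proposition in the main regime and bounding $\mathcal{R}(P,N,\delta)$ trivially outside it. The three cases $\alpha_k>1$, $\alpha_k\in[N^{-k},1]$, and $0<\alpha_k<N^{-k}$ are exactly those hinted at in the remarks between the proposition and the corollary, and each contributes (or trivially controls) one of the three principal terms on the right-hand side.

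If $\alpha_k>1$, then $\alpha_k^{1/k}\geq 1$, so $N\alpha_k^{1/k}\geq N\geq \mathcal{R}(P,N,\delta)-1$ by the trivial bound, and the extra $N^\epsilon$ (or $1$ when $k=1$) absorbs the $+1$. Symmetrically, if $0<\alpha_k<N^{-k}$, then $\alpha_k^{-1/k}>N$, so $\mathcal{R}(P,N,\delta)\leq N+1$ is already majorized by the $\alpha_k^{-1/k}$ term. Both of these ranges are pure bookkeeping.

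The substantive step is the central range $\alpha_k\in[N^{-k},1]$, where I apply the proposition with $a=1$ and $q:=\lfloor\alpha_k^{-1}\rfloor$. I need to verify its three hypotheses: $q\in[1,N^k]\cap\mathbb{Z}$ (immediate from $\alpha_k\in[N^{-k},1]$); $(a,q)=1$ (trivial since $a=1$); and $|\alpha_k-1/q|\leq 1/q^2$, which follows from $q\leq \alpha_k^{-1}<q+1$ via
\begin{align*}
\left|\alpha_k-\tfrac{1}{q}\right|=\tfrac{1}{q}-\alpha_k\leq \tfrac{1}{q}-\tfrac{1}{q+1}=\tfrac{1}{q(q+1)}\leq \tfrac{1}{q^2}.
\end{align*}
The proposition then yields $\mathcal{R}(P,N,\delta)\ll Nq^{-1/k}+N\delta^{2/(k(k+1))}+N^\epsilon$, and a short comparison $q\geq \tfrac{1}{2}\alpha_k^{-1}$ (obvious when $\alpha_k^{-1}\geq 2$, and verified directly from $q=1$ when $\alpha_k>\tfrac{1}{2}$) gives $q^{-1/k}\ll \alpha_k^{1/k}$, producing the claimed bound.

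There is no real obstacle beyond this routine case-splitting: the corollary is genuinely a consequence of the proposition plus the trivial bound $\mathcal{R}(P,N,\delta)\leq N+1$, with the slight subtlety that the implicit constant must absorb the factor $2^{1/k}$ coming from the rounding in $q=\lfloor\alpha_k^{-1}\rfloor$, which is harmless as it depends only on $k$.
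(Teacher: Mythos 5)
Your proof is correct and follows essentially the same route as the paper: the same three-way split on $\alpha_k\in[N^{-k},1]$, $\alpha_k>1$, and $\alpha_k<N^{-k}$, with the proposition applied via $a=1$, $q=\lfloor\alpha_k^{-1}\rfloor$ in the central regime and the trivial bound $\mathcal{R}(P,N,\delta)\leq N+1$ used elsewhere. You have merely filled in the verifications (the hypothesis $|\alpha_k-1/q|\leq 1/q^2$ and the comparison $q^{-1/k}\ll\alpha_k^{1/k}$) that the paper leaves implicit.
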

Next, we can see from the proof of Theorem \ref{thm1} that the bound $\mathcal{R}(f,N,\delta)\leq |T_0|+|S_0|$ has two parts. If the function $f$ is already in the form of a Lagrange polynomial with rational coefficient, then the contribution of the points in the set $\mathcal{S}(f,N,\delta)$ will only come from major arcs, that is
\begin{align*}
\mathcal{R}(f,N,\delta)\leq b_kN\delta^{\frac{2}{k(k-1)}}+8k^3\Big( \frac{\delta}{\lambda_k}\Big)^{1/k}+10e^3k^2
\end{align*}
or 
\begin{align*}
\mathcal{R}(f,N,\delta)\leq \frac{3}{2} b_kN\delta^{\frac{2}{k(k-1)}}+16k^3\Big( \frac{\delta}{\lambda_k}\Big)^{1/k}+10e^3k^2.
\end{align*}
\subsection{Further Refinements}
The refinements of Theorem \ref{thm1} have been made in several directions, first we will see that the result can still hold for $k=2$ \citep{branton1994points}. We will show the following version which is stated explicitly in \citep[Theorem 5.6]{huxley}.
\begin{thm}[Branton-Sargos]\label{thm10} Let $f\in C^2[N,2N]$ such that there exists $\lambda_2>0$ such that, for all $x\in [N,2N]$, we have
\begin{align*}
|f''(x)|\asymp \lambda_2.
\end{align*}
Then
\begin{align*}
\mathcal{R}(f,N,\delta)\ll N\lambda_2^{1/3}+N\delta+\Big( \frac{\delta}{\lambda_2}\Big)^{1/2}+1.
\end{align*}
If in addition there exists $\lambda_1>0$ such that $|f'(x)|\asymp \lambda_1$ for all $x\in [N,2N]$, then
\begin{align*}
\mathcal{R}(f,N,\delta) \ll N\lambda_2^{1/3}+N\delta+\lambda_1\Big( \frac{\delta}{\lambda_2}\Big)^{1/2}+\frac{\delta}{\lambda_1}+1.
\end{align*}
\end{thm}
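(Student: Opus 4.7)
The plan is to adapt the entire scheme of Theorem \ref{thm1} to the border case $k=2$, which was excluded there. The decomposition $\mathcal{R}(f,N,\delta)\leq|S_0|+|T_0|$ into major-arc points $S_0$ and non-major-arc points $T_0$ still applies, and the task is to re-establish Lemma \ref{lem7}, Lemma \ref{lem8} and Lemma \ref{lem9} in this setting. Many arguments simplify because only a single intermediate derivative order ($j=1$) enters the Taylor expansion of $g=f-P$.

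For the non-major-arc contribution I would prove the $k=2$ analogue of Lemma \ref{lem9}: any three points $n_0<n_1<n_2$ of $\mathcal{S}(f,N,\delta)$ that are not collinear satisfy
\begin{align*}
n_2-n_0\gg\min\bigl(\lambda_2^{-1/3},\ \delta^{-1}\bigr).
\end{align*}
The argument mirrors Lemma \ref{lem9} with $k=2$: the leading Lagrange coefficient $b_2=f''(t)/2$ equals $A_2/D_2$ with $A_2\in\mathbb{Z}$ nonzero, and comparing $|b_2|\leq c_2\lambda_2/2$ with $|b_2|\geq 1/D_2-O(\delta)$, together with $D_2\leq(n_2-n_0)^3$, yields the dichotomy. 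Packing consecutive points of $T_0$ in blocks of $k^2+1=5$ as in the proof of Theorem \ref{thm1} then gives $|T_0|\ll N\lambda_2^{1/3}+N\delta+1$.

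For the major-arc contribution, a major arc is here a maximal consecutive block of at least $3$ points of $\mathcal{S}(f,N,\delta)$ lying on a line $y=(a/q)x+b/q$. Parts (1) and (2) of Lemma \ref{lem7} specialise to $k=2$ essentially unchanged, giving $L\ll(\delta/\lambda_2)^{1/2}$ and $|\bar{\mathcal{A}}|\ll L/q$. The delicate step is part (3), whose proof in Theorem \ref{thm1} rests on Lemma \ref{lem4} and therefore requires $k\geq 3$. For $k=2$ the Taylor expansion of $g=f-P$ about $n_0$ contains only one intermediate term $g'(n_0)d$, and one may invoke Hadamard's Theorem \ref{thm7} directly to bound $|g'(n_0)|$ in terms of $\sup|g|\leq\delta$ and $\sup|g''|\ll\lambda_2$; this replaces Gorny's inequality and bypasses Lemma \ref{lem4} entirely. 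Feeding the resulting $k=2$ version of Lemma \ref{lem7} into the Case 1 / Case 2 analysis of Lemma \ref{lem8} then yields $|S_0|\ll N\delta+(\delta/\lambda_2)^{1/2}+1$, which combined with the estimate for $|T_0|$ gives the first statement.

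For the second statement, the hypothesis $|f'(x)|\asymp\lambda_1$ refines the geometry: applying the mean value theorem to $f$ between the endpoints of a major arc of length $L$ pins the slope $a/q$ of the associated line to be $\asymp\lambda_1$, which constrains the set of denominators that can appear. Tracking this constraint through the interval-counting step of Lemma \ref{lem8} replaces the $(\delta/\lambda_2)^{1/2}$ summand in $|S_0|$ by the sharper $\lambda_1(\delta/\lambda_2)^{1/2}$, while applying the first derivative test (Theorem \ref{thm2}) separately to the residual very short major arcs contributes the additional $\delta/\lambda_1$ term. The main obstacle throughout is ensuring the absolute constants in the $k=2$ analogue of Lemma \ref{lem7}(3) remain small enough for the Case 1 / Case 2 dichotomy of Lemma \ref{lem8} to go through; this is the one place where the usual proof apparatus genuinely leans on $k\geq3$, and working around it is the crux of the argument.
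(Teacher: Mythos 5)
Your framework is the natural one and the first several steps check out: the decomposition $\mathcal{R}\le|S_0|+|T_0|$, the $k=2$ analogue of Lemma~\ref{lem9} (three non-collinear points in $\mathcal{S}$ have $n_2-n_0\gg\min(\lambda_2^{-1/3},\delta^{-1})$), the specialisations $L\ll(\delta/\lambda_2)^{1/2}$ and $|\bar{\mathcal{A}}|\ll L/q$ of Lemma~\ref{lem7}(1)--(2), and in particular the idea of replacing Gorny's inequality by Hadamard's Theorem~\ref{thm7} to control $g'=f'-P'$ in the $k=2$ Taylor expansion are all sound; Hadamard indeed gives $|g'|\ll\delta/L$ once $L\ll(\delta/\lambda_2)^{1/2}$ is fed in, which reproduces $d\gg L(q\delta)^{-1/2}$, the $k=2$ version of Lemma~\ref{lem7}(3). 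However, you have located the genuine $k\ge3$ dependence in the wrong place. It is not the constants in Lemma~\ref{lem7}(3) that break: it is an exponent in the proof of Lemma~\ref{lem8}. There, the chain
\begin{align*}
\frac{|\bar{\mathcal{A}}_j|}{d_j}\le 2k(a_k\delta)^{1/k}\,q_j^{\frac{k-3}{k(k-1)}}\le 2k(a_k\delta)^{\frac{2}{k(k-1)}}
\end{align*}
uses $q_j\le Q_k$, and this gives an \emph{upper} bound only because the exponent $(k-3)/(k(k-1))$ is nonnegative for $k\ge3$. For $k=2$ that exponent is $-1/2<0$, so $q_j\le Q_2$ is useless and the best available is $q_j\ge1$, yielding $|\bar{\mathcal{A}}_j|/d_j\ll(a_2\delta)^{1/2}$. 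Feeding this into Lemma~\ref{lem3} gives $R_2\ll N\delta^{1/2}+(\delta/\lambda_2)^{1/2}$, an extra factor of $\delta^{-1/2}$ worse than the $N\delta$ you claim, and this is not absorbed by the other terms of the theorem. So the ``Case 1 / Case 2'' machinery ported verbatim proves only $\mathcal{R}\ll N\lambda_2^{1/3}+N\delta^{1/2}+(\delta/\lambda_2)^{1/2}+1$ (essentially the second derivative test), not the Branton--Sargos bound. Closing the gap requires a new counting idea for the lines of small denominator --- for instance exploiting that $f'$ is strictly monotone, so the slopes $a/q$ that can support a major arc are constrained, rather than relying on a uniform $d_j$-cover --- and your proposal does not supply it. (This is also why your sketch for the second statement, where the hypothesis $|f'|\asymp\lambda_1$ is used precisely to constrain the set of admissible slopes $a/q$, is closer to the right mechanism, but as written it is layered on top of the unresolved first part.) For reference, the paper itself does not give a proof of this theorem; it only states it and cites Branton--Sargos and Bordell\`es.
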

From Theorem \ref{thm1}, $N\lambda_k^{\frac{2}{k(k+1)}}$ is the main term of the equality. It's very difficult to improve on the main term. The others are the secondary terms and the quantity $(\delta\lambda_k^{-1})^{1/k}$ is quasi-optimal. So we turn our eyes on the term $N\delta^{\frac{2}{k(k-1)}}$, to see whether it may be improved. This is done by generalizing the method used in Theorem \ref{thm10} and by using a $k$-dimensional version of the reduction principle and a new divisibility relation on the divided differences discovered by Filaseta and Trifonov. They proved the following result \citep[Theorem 5.7]{huxley}.
\begin{thm}
Let $k\geq 3$ be an integer and $f\in C^k[N,2N]$ such that there exist $\lambda_{k-1}>0$ and $\lambda_k>0$ such that, for all $x\in [N,2N]$, we have
\begin{align*}
|f^{(k-1)}(x)|\asymp \lambda_{k-1},\quad |f^{(k)}(x)|\asymp \lambda_{k}\quad \text{and} \quad \lambda_{k-1}=N\lambda_k. \tag{31}\label{31}
\end{align*}
Then the following upper bounds hold.
\begin{itemize}
\item For all $k\geq 3$, we have
\begin{align*}
\mathcal{R}(f,N,\delta)\ll N\lambda_k^{\frac{2}{k(k+1)}}+N\delta^{\frac{2}{(k-1)(k-2)}}+N(\delta\lambda_{k-1})^{\frac{2}{k^2-k+2}}+\Big( \frac{\delta}{\lambda_{k-1}}\Big)^{\frac{1}{k-1}}+1.
\end{align*}
\item For $k=3$, we have
\begin{align*}
\mathcal{R}(f,N,\delta)\ll N\lambda_3^{1/6}+N\delta^{2/3}+N(\delta^3\lambda_3)^{1/12}+\Big(\frac{\delta}{\lambda_2} \Big)^{1/2}+1.
\end{align*}
\item For all $k\geq 4$ and $\epsilon>0$, we have
\begin{align*}
\mathcal{R}(f,N,\delta)\ll \Big\lbrace &N\lambda_k^{\frac{2}{k(k+1)}}+N(\delta\lambda_{k-1})^{\frac{2}{k^2-k+2}}  +N\delta^{\frac{4}{k^2-3k+6}}\\
&+N(\delta^2N^{-1}\lambda_{k-1}^{-1})^{\frac{2}{k^2-3k+4}} \Big\rbrace N^\epsilon+ \Big(  \frac{\delta}{\lambda_{k-1}}\Big)^{\frac{1}{k-1}}+1.
\end{align*}
\item For all $k\geq 5$, we have
\begin{align*}
\mathcal{R}(f,N,\delta)\ll N\lambda_k^{\frac{2}{k(k+1)}}+ N\delta^{\frac{2}{(k-1)(k-2)}}+\Big(  \frac{\delta}{\lambda_{k-1}}\Big)^{\frac{1}{k-1}}+1.
\end{align*}
\end{itemize}
\end{thm}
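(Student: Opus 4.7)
The plan is to extend the Huxley--Sargos strategy (Theorem \ref{thm1}) while sharpening the secondary term $N\delta^{2/(k(k-1))}$, exploiting the additional hypothesis $\lambda_{k-1}=N\lambda_k$ that links $f^{(k-1)}$ and $f^{(k)}$. The main term $N\lambda_k^{2/(k(k+1))}$ should survive unchanged, since it still arises (via Lemma \ref{lem9}) from configurations of $k+1$ integer points that fail to lie on any common algebraic curve of degree $<k$. The tail $(\delta/\lambda_{k-1})^{1/(k-1)}$ should replace the Huxley--Sargos tail $(\delta/\lambda_k)^{1/k}$ by re-running the length estimate of Lemma \ref{lem7} part~(1) with $k-1$ in place of $k$, using $|f^{(k-1)}|\asymp\lambda_{k-1}$ on a proper major arc whose Lagrange interpolant has degree $<k-1$.

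First, generalizing the Branton--Sargos argument (Theorem \ref{thm10}), I would apply the reduction principle (Lemma \ref{lem1}) in a multidimensional form: iterate the difference operator $\Delta_a$ a total of $k-2$ times, producing a sum over $(a_1,\ldots,a_{k-2}) \in [1,A_1]\times\cdots\times[1,A_{k-2}]$ of terms of the shape $\mathcal{R}(\Delta_{a_1}\cdots\Delta_{a_{k-2}}f,\,N,\,2^{k-2}\delta)$. Repeated application of the mean value theorem shows that the iterated difference has second derivative of order $\asymp a_1\cdots a_{k-2}\lambda_k$, so the second derivative test (Theorem \ref{thm3}) bounds each summand, and an outer application of Srinivasan's lemma (Lemma \ref{lem2}) optimises the cutoffs $A_1,\ldots,A_{k-2}$.

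Second, and crucially, I would feed in the Filaseta--Trifonov divisibility relation for divided differences. Their relation asserts that when $k+1$ integer approximations $(n_j,\lfloor f(n_j)\rceil)$ lie close to the curve, the numerator $A_k$ appearing in the leading coefficient $A_k/D_k$ of the Lagrange interpolant (as in the proof of Lemma \ref{lem9}) inherits divisibility by a nontrivial product of the differences $(n_j-n_i)$ beyond the trivial contribution to $D_k$. Combined with $|f^{(k)}|\asymp \lambda_k$, this forces substantially larger gaps among the $n_j$ than Lemma \ref{lem9} alone would give, upgrading the exponent $2/(k(k-1))$ of $\delta$ to $2/((k-1)(k-2))$ in the final estimate.

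Finally, the split into the cases $k=3$, $k\geq 4$, and $k\geq 5$ reflects how the divisibility relation meshes with the dimension of the reduction: for $k=3$ the reduction is low dimensional and only one extra divisibility is available, leaving a hybrid term $N(\delta^{3}\lambda_{3})^{1/12}$; for $k=4$ the optimisation is just tight enough to cost an $N^\epsilon$ loss; and for $k\geq 5$ the accumulated constraints are abundant enough to eliminate both the hybrid term and the $\epsilon$-loss. The main obstacle by a wide margin is establishing the Filaseta--Trifonov divisibility itself, which rests on arithmetic properties of the Vandermonde-type determinant~(\ref{4}) going well beyond the elementary bound $q\leq\prod_{i<j}(n_j-n_i)$ used in Lemma \ref{lem7}; once that tool is in hand, the multidimensional reduction and the Srinivasan optimisation become a lengthy but essentially mechanical bookkeeping exercise in which the four stated bounds drop out as the four possible orderings of the four terms $N\lambda_k^{2/(k(k+1))}$, $N\delta^{2/((k-1)(k-2))}$, $N(\delta\lambda_{k-1})^{2/(k^{2}-k+2)}$ and $(\delta/\lambda_{k-1})^{1/(k-1)}$.
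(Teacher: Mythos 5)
The paper does not actually prove this theorem: it only quotes the statement from Bordell\`es (\citep[Theorem 5.7]{huxley}) and gestures at the method in one sentence, saying it comes from generalising Theorem \ref{thm10} via a $k$-dimensional reduction principle together with a Filaseta--Trifonov divisibility relation for divided differences. Your sketch correctly reproduces those same high-level ingredients, so at the level of strategy you and the paper's description agree; but since the paper gives no proof, there is nothing to compare against in detail, and what you have written is an outline rather than a proof.

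Within your outline there are concrete gaps. The step you yourself identify as the crux---the Filaseta--Trifonov divisibility relation---is described only vaguely (``$A_k$ inherits divisibility by a nontrivial product of the differences $(n_j-n_i)$ beyond the trivial contribution to $D_k$''); without a precise statement, nothing in your argument actually forces the improved gap lower bound that upgrades the exponent of $\delta$ from $\tfrac{2}{k(k-1)}$ to $\tfrac{2}{(k-1)(k-2)}$. The claim that the four bulleted estimates ``drop out as the four possible orderings of the four terms'' is also not correct: the bullets are not permutations of one fixed list---the third bullet carries both an extra hybrid term $N(\delta^2 N^{-1}\lambda_{k-1}^{-1})^{2/(k^2-3k+4)}$ and an $N^{\epsilon}$ loss that the others lack, so the case split requires a genuine argument, not just reordering. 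Finally, there is an internal tension in your plan: you propose to get the tail $(\delta/\lambda_{k-1})^{1/(k-1)}$ by re-running the major-arc length estimate of Lemma~\ref{lem7}(1) at level $k-1$, yet the body of your argument is the reduction-plus-second-derivative-test route, which bypasses major arcs entirely. Those two sub-strategies do not automatically combine; one would need to explain which parts of $\mathcal{S}(f,N,\delta)$ the major-arc analysis covers and which parts the iterated difference argument covers, and check that the terms from each regime add up to the stated bound.
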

We can see easily that Theorem \ref{thm1} implies the $k$th derivative test, because the condition $\delta\ll \lambda_k$ means the main term dominates all the others. The following result is an estimate analogous to the $k$th derivative test but with a more flexible condition than (\ref{18}) \citep[Proposition 5.2]{huxley}.
\begin{prop} \label{prop1}
Let $f\in C^{\infty}[N,2N]$ such that there exists $T\geq 1$ such that, for all $x\in [N,2N]$ and all $j\in \mathbb{Z}_{\geq 0}$, we have
\begin{align*}
|f^{(j)}(x)|\asymp \frac{T}{N^j},  \tag{32}\label{32}
\end{align*}
and 
\begin{align*}
N\delta\leq T\leq \delta^{-1}.  \tag{33}\label{33}
\end{align*}
Then for all $k\geq 1$, we have
\begin{align*}
\mathcal{R}(f,N,\delta)\ll T^{\frac{2}{k(k+1)}}N^{\frac{k-1}{k+1}}.
\end{align*}
\end{prop}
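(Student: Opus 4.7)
The plan is to prove the proposition by induction on $k \geq 1$. The starting observation is that (\ref{33}) gives $\delta \leq \min(T/N, 1/T)$, so in particular $\delta \leq 1/T$ is available whenever $T \geq \sqrt{N}$; this is the bound on $\delta$ I will use throughout the non-trivial regime. For the base case $k=1$, Theorem \ref{thm2} applied with $\lambda_1 \asymp T/N$ (from (\ref{32})) yields $\mathcal{R}(f,N,\delta) \ll T + N\delta + \delta N/T + 1$, and since (\ref{33}) gives $N\delta \leq T$ and hence $\delta N/T \leq 1$, this collapses to $\mathcal{R}(f,N,\delta) \ll T$, which matches the target $T^{1}N^{0}$.

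For the inductive step, I assume the proposition for $k-1$ and split on the threshold $T = N^{(k-1)/2}$. In Case A ($T \geq N^{(k-1)/2}$), which forces $T \geq \sqrt{N}$ for $k \geq 2$ and hence $\delta \leq 1/T$, I apply Theorem \ref{thm1} (or Theorem \ref{thm10} when $k=2$) with parameter $k$. Its main term $N\lambda_k^{2/(k(k+1))}$ equals $T^{2/(k(k+1))}N^{(k-1)/(k+1)}$, exactly the desired bound. The secondary term $N\delta^{2/(k(k-1))}$ is at most $N/T^{2/(k(k-1))}$, and its ratio to the main term simplifies to $N^{2/(k+1)}T^{-4/((k-1)(k+1))}$; this is $\leq 1$ precisely when $T^2 \geq N^{k-1}$, i.e.\ under the Case A hypothesis. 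The other secondary term $(\delta/\lambda_k)^{1/k}$ is at most $N/T^{2/k}$, and is dominated as soon as $T \geq N^{k/(k+2)}$, which Case A supplies because $(k-1)/2 \geq k/(k+2)$ for every $k \geq 2$ (equivalent to $(k-2)(k+1) \geq 0$). In Case B ($T < N^{(k-1)/2}$), I invoke the inductive hypothesis to obtain $\mathcal{R}(f,N,\delta) \ll T^{2/((k-1)k)}N^{(k-2)/k}$; a direct exponent comparison shows that this bound is itself $\leq T^{2/(k(k+1))}N^{(k-1)/(k+1)}$ iff $T^{2/(k-1)} \leq N$, which is exactly the Case B assumption.

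The principal obstacle is Case A: one must simultaneously verify that $\delta \leq 1/T$ is a usable bound (requiring $T \geq \sqrt{N}$) and that both secondary Huxley-Sargos terms are absorbed by the main term. The threshold $T \geq N^{(k-1)/2}$ is chosen precisely so that both bookkeeping constraints succeed, with the binding constraint coming from the $N\delta^{2/(k(k-1))}$ term; the $(\delta/\lambda_k)^{1/k}$ term enjoys a little slack, which is what makes the induction close. The remaining routine piece is unpacking the exponent identities that drive the Case A and Case B comparisons, but these are direct elementary manipulations.
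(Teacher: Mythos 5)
Your proof is correct, and it runs the same induction as the paper's proof, but replaces the paper's comparison mechanism with an explicit threshold split. The paper assumes the claim at level $k$, applies Huxley--Sargos at level $k+1$ to get three terms $e_1,e_2,e_3$, and observes $\mathcal{R}\ll\min\bigl(\max(e_1,e_2,e_3),\,B_k\bigr)$ where $B_k$ is the inductive bound; it then controls the cases $E=e_2$ and $E=e_3$ via the interpolation inequality $\min(x,y)\leq x^ay^{1-a}$ together with $T\delta\leq 1$ and $N\delta T^{-1}\leq 1$ from (\ref{33}). You instead split on whether $T$ is above or below $N^{(k-1)/2}$: above the threshold you show directly that both Huxley--Sargos secondary terms are absorbed by the main term, and below it you invoke the inductive bound at level $k-1$. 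Unwinding the interpolation trick shows both approaches reduce to the identical exponent identities, so they are equivalent in content; yours is arguably more transparent about \emph{where} each term in the Huxley--Sargos bound becomes the bottleneck, while the paper's is more compact and avoids having to name a threshold.

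One small wording slip: you write that $\delta\leq 1/T$ ``is available whenever $T\geq\sqrt N$.'' In fact $\delta\leq 1/T$ follows unconditionally from the right-hand inequality in (\ref{33}), $T\leq\delta^{-1}$; no hypothesis on $T$ versus $\sqrt N$ is needed for it. The condition $T\geq\sqrt N$ only tells you which of $T/N$ and $1/T$ is the smaller of the two, which is irrelevant here. Since you only deploy $\delta\leq 1/T$ inside Case A, where $T\geq N^{(k-1)/2}\geq\sqrt N$ anyway, the error is harmless, but the sentence as written suggests a spurious dependence that could confuse a reader.
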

Note that, using the notion of (\ref{32}), for $k\geq 2$, Huxley and Sargos's result may be stated as follow
\begin{align*}
\mathcal{R}(f,N,\delta)\ll T^{\frac{2}{k(k+1)}}N^{\frac{k-1}{k+1}}+N\delta^{\frac{2}{k(k-1)}}+N(\delta T^{-1})^{1/k}, \tag{34}\label{34}
\end{align*}
and the term $N(\delta T^{-1})^{1/k}$ is dominated by the term $N\delta^{\frac{2}{k(k-1)}}$ for $k\geq 3$. So if Proposition \ref{prop1} is true, it tells us that the condition (\ref{33}) is sufficient to remove both these two terms mentioned.
\begin{proof}
We will be using induction to prove the proposition. For $k=1$, by the first derivative test we have
\begin{align*}
\mathcal{R}(f,N,\delta)&\ll N\lambda_1+N\delta+\frac{\delta}{\lambda_1}+1= N\frac{T}{N}+N\delta+\frac{\delta}{T/N}+1\\
&=T+N\delta+\delta\frac{N}{T}+1\ll T \qquad\text{by (\ref{33})}.
\end{align*} 
Now suppose the claim to be true for some $k\geq 1$, that is $\mathcal{R}(f,N,\delta) \ll T^{\frac{2}{k(k+1)}}N^{\frac{k-1}{k+1}}$. Now by Huxley and Sargos's result (\ref{34}), what we have for $k+1$ is
\begin{align*}
\mathcal{R}(f,N,\delta)\ll T^{\frac{2}{(k+1)(k+2)}}N^{\frac{k}{k+2}}+N\delta^{\frac{2}{k(k+1)}}+N(\delta T^{-1})^{\frac{1}{k+1}}.
\end{align*}
Let
\begin{align*}
E=\max \Big(T^{\frac{2}{(k+1)(k+2)}}N^{\frac{k}{k+2}},N\delta^{\frac{2}{k(k+1)}}, N(\delta T^{-1})^{\frac{1}{k+1}}\Big)=\max (e_1,e_2,e_3).
\end{align*}
We will get
\begin{align*}
\mathcal{R}(f,N,\delta)\ll \min \Big( E,T^{\frac{2}{k(k+1)}}N^{\frac{k-1}{k+1}}  \Big).
\end{align*}
The result follows if $E=e_1$. Now to discuss the cases where $E=e_2$ and $E=e_3$ separately we will need the following obvious inequality. If $x,y\geq 0$ and $0\leq a \leq 1$, then $\min (x,y)\leq x^ay^{1-a}$.
\begin{itemize}
\item Case $E=e_2$, we choose $a=\frac{1}{k+2}$. By the inequality we have deduced and (\ref{33})
\begin{align*}
\min\Big(e_2, T^{\frac{2}{k(k+1)}}N^{\frac{k-1}{k+1}}\Big)&\leq T^{\frac{2}{(k+1)(k+2)}}N^{\frac{k}{k+2}}(T\delta)^{\frac{2}{k(k+1)(k+2)}}\\
&\leq T^{\frac{2}{(k+1)(k+2)}}N^{\frac{k}{k+2}}.
\end{align*}
\item Case $E=e_3$, we choose $a=\frac{2}{k+2}$, similar as in the previous case, we get
\begin{align*}
\min \Big(  e_3,T^{\frac{2}{k(k+1)}}N^{\frac{k-1}{k+1}}\Big)&\leq T^{\frac{2}{(k+1)(k+2)}}N^{\frac{k}{k+2}}(N\delta T^{-1})^{\frac{2}{(k+1)(k+2)}}\\
&\leq T^{\frac{2}{(k+1)(k+2)}}N^{\frac{k}{k+2}}.
\end{align*}
Which completes the proof. \quad \qedhere
\end{itemize}
\end{proof}
The main term in the case $k=2$ was improved by Huxley \citep{huxley1996area}, Huxley and Trifonov \citep{hss} and then Trifonov \citep{trifonov2002lattice} again, who extended an earlier work by Swinnerton Dyer. The following result is one of many versions of the theorem which was proved \citep[Theorem 5.8]{huxley}.
\begin{thm}[Huxley]\label{thm11} Let $f\in C^3[N,2N]$ such that there exist $C\geq 1$, $0<\lambda_2\leq C^{-1}$ and $\lambda_3>0$ such that for all $x\in [N,2N]$, we have
\begin{align*}
C^{-1}\lambda_2\leq |f''(x)|\leq C\lambda_2, \quad C^{-1}\lambda_3\leq |f'''(x)|\leq C\lambda_3\quad \text{and}\quad \lambda_2=N\lambda_3. \tag{35}\label{35}
\end{align*}
Then
\begin{align*}
\mathcal{R}(f,N,\delta)\ll& \Big\lbrace N^{9/10}\lambda_2^{3/10}+N^{4/5}\lambda_2^{1/5}+N\lambda_2^{3/8}\delta^{1/8}+N^{7/8}\lambda_2^{1/4}\delta^{1/8}\\
&+N^{6/7}(\lambda_2\delta)^{1/7}+N\lambda_2^{1/5}\delta^{2/5} \Big\rbrace (\log N)^{2/5}+N\delta\\
&+(\delta\lambda_2^{-1})^{1/2}+1.
\end{align*}
The implied constant depends only on $C$.
\end{thm}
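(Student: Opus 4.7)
The plan is to combine the reduction principle (Lemma \ref{lem1}) with the Branton--Sargos second-derivative bound (Theorem \ref{thm10}) and Srinivasan's optimization lemma (Lemma \ref{lem2}), then to sharpen the result by iterating the reduction and invoking exponential sum techniques on the resulting multilinear sums. The crucial structural input beyond Theorem \ref{thm10} is the coupling $\lambda_2=N\lambda_3$ in (\ref{35}), which synchronizes the second and third derivative scales and makes iterated differencing behave regularly.

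First I would apply Lemma \ref{lem1} with a parameter $A\in[1,N]$ to obtain
\begin{align*}
\mathcal{R}(f,N,\delta)\leq \frac{N}{A}+\sum_{a\leq A}\mathcal{R}(\Delta_a f,N,2\delta)+1.
\end{align*}
By the mean value theorem and (\ref{35}), $|(\Delta_a f)'(x)|\asymp a\lambda_2$ and $|(\Delta_a f)''(x)|\asymp a\lambda_3$, so Theorem \ref{thm10} applied to $\Delta_a f$ with these parameters yields a preliminary per-$a$ bound. Summing over $a\leq A$ and optimizing $A$ via Lemma \ref{lem2} already recovers, essentially for free, the trailing secondary terms $N\delta$ and $(\delta\lambda_2^{-1})^{1/2}+1$ in the statement.

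To produce the six sharper main terms inside the braces, I would iterate the reduction one or two more times, generating a second difference parameter $b\leq B$ (and possibly a third $c\leq C$); because of (\ref{35}), the higher iterated differences $\Delta_b\Delta_a f$ still satisfy a clean asymptotic on their first and second derivatives. The resulting multilinear sum over $(a,b)$ is then transformed by a single application of van der Corput's $B$-process (Poisson summation with stationary phase), which trades a derivative for an exponential sum at a modified frequency, and the transformed sum is re-estimated by the second-derivative test. The different exponent pairs $(9/10,3/10)$, $(4/5,1/5)$, $(7/8,1/4)$, $(6/7,1/7)$, and so on, correspond to different numbers of $A$ and $B$ steps in the iteration. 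The factor $(\log N)^{2/5}$ arises from dyadic decompositions over $a$ and $b$ together with the final parameter-envelope optimization.

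The main obstacle is precisely this parameter optimization. Each of the six main terms corresponds to a distinct regime in the four-parameter space $(A,B,\delta,\lambda_2)$, and one must show that the pointwise minimum over the free parameters of the bounds in each regime is dominated by the envelope written in the statement. A multi-dimensional version of Srinivasan's lemma, together with careful bookkeeping of the dyadic logarithmic losses at each scale, is what ultimately produces the clean $(\log N)^{2/5}$ dependence. A secondary technical difficulty is verifying that the Branton--Sargos hypothesis continues to hold for $\Delta_b\Delta_a f$ uniformly in $a$ and $b$: the coupling $\lambda_2=N\lambda_3$ is essential here, since without it the first-derivative scale would decouple from the second and the bilinear bound would degenerate for extremal ranges of $a$.
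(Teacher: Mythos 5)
The paper does not prove Theorem \ref{thm11}; it is quoted from Bordell\`es \citep[Theorem 5.8]{huxley} and cited to Huxley \citep{huxley1996area}, Huxley--Trifonov \citep{hss} and Trifonov \citep{trifonov2002lattice}, so there is no in-paper argument to compare against, and your sketch must stand on its own. It does not: the central step fails on smoothness. You plan to iterate Lemma \ref{lem1} two or three times and feed the iterated differences into Theorem \ref{thm10}. One iteration is fine: $(\Delta_a f)'(x)=af''(\xi)\asymp a\lambda_2$ and $(\Delta_a f)''(x)=af'''(\xi)\asymp a\lambda_3$. But $(\Delta_b\Delta_a f)''(x)=a\bigl(f'''(\xi_{x+b})-f'''(\xi_x)\bigr)$ is a first difference of $f'''$, whose size is governed by $f^{(4)}$, about which the hypothesis $f\in C^3[N,2N]$ and condition (\ref{35}) say nothing; without a two-sided bound on a fourth derivative you cannot assert $|(\Delta_b\Delta_a f)''|\asymp ab\lambda_4$, and this quantity may be arbitrarily small by cancellation. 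The ``clean asymptotic'' for $\Delta_b\Delta_a f$ on which your whole construction rests is therefore not available, and the second and third reduction steps do not exist under the stated hypotheses.

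Even setting that aside, the parameter analysis does not close. A single reduction plus Theorem \ref{thm10}, summed over $a\leq A$ and optimized via Lemma \ref{lem2} with $\lambda_3=\lambda_2/N$, balances $N/A$ against $\sum_{a\leq A}N(a\lambda_3)^{1/3}\ll N^{2/3}\lambda_2^{1/3}A^{4/3}$ and gives a main term of size $N^{6/7}\lambda_2^{1/7}$, which does not match any of the six bracketed terms in the statement. Van der Corput's $B$-process acts on exponential sums; you give no mechanism converting $\mathcal{R}(f,N,\delta)$ into one, and stationary phase has no evident interaction with the nearest-integer condition $\parallel f(n)\parallel<\delta$. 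Finally, the factor $(\log N)^{2/5}$ cannot arise from Srinivasan's lemma, which produces no logarithms, nor from dyadic decomposition, which gives integer powers of $\log N$. The theorem is in fact proved by pushing the major-arc analysis of Section 2.3 considerably further: one classifies major arcs dyadically by denominator, counts them via a Diophantine determinant/resonance-curves argument in the manner of Swinnerton Dyer, and exploits $\lambda_2=N\lambda_3$ to control the spacing of consecutive arcs. That is a genuinely different route from differencing plus exponential-sum transformations, and the distinction matters because only the arc count sees the arithmetic of the denominators, which is what carries the exponents past the second-derivative test.
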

Finally for the sake of completion, we will show the following slight improvement obtained due to Trifonov \citep{trifonov2002lattice} and the explicit statement can also be found in \citep[Theorem 5.9]{huxley}.
\begin{thm}[Trifonov]
Let $f\in C^3[N,2N]$ such that there exist $C\geq 1$, $0<\lambda_2\leq 1$ and $\lambda_3>0$ such that for all $x\in [N,2N]$, we have
\begin{align*}
C^{-1}\lambda_2\leq |f''(x)|\leq C\lambda_2, \quad C^{-1}\lambda_3\leq |f'''(x)|\leq C\lambda_3 \quad \text{and} \quad \lambda_2=N\lambda_3 \tag{36}\label{36}
\end{align*}
and
\begin{align*}
N\lambda_2\geq 1 \quad \text{and}\quad N\delta^2\leq C^{-1} \tag{37}\label{37}
\end{align*}
Then for all $\epsilon>0$, we have
\begin{align*}
\mathcal{R}(f,N,\delta)\ll& \Big\lbrace N^{43/54}\lambda_2^{4/27}+N^{4/5}\lambda_2^{4/25}+N^{9/10}\delta^{4/15}+N^{12/13}\delta^{4/13}\\
&+N^{6/7}\lambda_2^{2/7}+N\lambda_2+N(\lambda_2\delta)^{1/4}      \Big\rbrace N^{\epsilon}+\lambda_2(N\delta)^{5/2}.
\end{align*}
The implied constant depends only on $C$ and $\epsilon$.
\end{thm}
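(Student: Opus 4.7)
The plan is to extend Huxley's argument from Theorem \ref{thm11} by injecting the divisibility ideas of Filaseta--Trifonov on divided differences, now exploiting the stronger smallness condition $N\delta^2\le C^{-1}$ from (\ref{37}). My starting point would be the reduction principle (Lemma \ref{lem1}): for an auxiliary range $A$ to be chosen later, one has
\[
\mathcal{R}(f,N,\delta)\ll \frac{N}{A}+\sum_{a\le A}\mathcal{R}(\Delta_a f,N,2\delta)+1.
\]
For each $a$, the hypothesis (\ref{36}) together with the mean value theorem gives $(\Delta_a f)'(x)\asymp a\lambda_2$ and $(\Delta_a f)''(x)\asymp a\lambda_3$, so I would apply the Branton--Sargos Theorem \ref{thm10} to $\Delta_a f$ with $\lambda_2$ replaced by $a\lambda_3$. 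This converts one count for $f$ into a weighted sum of second-derivative counts for the differences, at a scale $a\lambda_3=a\lambda_2/N$ made smaller than in Huxley's setting.

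The key new ingredient is a sharpened divided-difference identity. Given $k+1$ points $n_0<\cdots<n_k$ of $\mathcal{S}(f,N,\delta)$ with nearest integers $m_j=\lfloor f(n_j)\rceil$, the quantity $A_k=D_k\,f[n_0,\ldots,n_k]$ is an integer, and under (\ref{37}) the perturbation term $y$ appearing in the proof of Lemma \ref{lem9} is strictly dominated by the main term $x=D_k f^{(k)}(t)$. This is what lets one impose a further congruence constraint on $A_k$ modulo prime powers dividing some $n_j-n_i$, in the spirit of Filaseta--Trifonov. The resulting dichotomy is sharper than Lemma \ref{lem9}: either $n_k-n_0$ is strictly larger than the Huxley--Sargos bound, or else the $m_j$ lie on a curve of degree $<k$ whose contribution is already handled by the proper-major-arc machinery of Lemma \ref{lem8}. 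The coupling $\lambda_2=N\lambda_3$ is essential to match the scales of the $(k-1)$-st and $k$-th terms in the Taylor expansion so that the divisibility actually bites.

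To close, I would collect all the estimates and optimise the free parameters via Srinivasan's lemma (Lemma \ref{lem2}), now applied to a multi-parameter problem in $(A, q, L, \delta, \lambda_2)$; the six competing terms produce critical exponents matching $N^{43/54}\lambda_2^{4/27}$, $N^{4/5}\lambda_2^{4/25}$, $N^{9/10}\delta^{4/15}$, $N^{12/13}\delta^{4/13}$, $N^{6/7}\lambda_2^{2/7}$ and $N\lambda_2$, with the $N(\lambda_2\delta)^{1/4}$ and $\lambda_2(N\delta)^{5/2}$ terms arising respectively as the contribution of points close to a low-degree algebraic curve and as the residue of the second-derivative test applied at the finest scale. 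The factor $N^\epsilon$ absorbs the $\log$-type losses from summing over $a\le A$ and from dyadic subdivision of the denominators $q$ of major arcs. The main obstacle is precisely this multi-parameter Srinivasan optimisation: with six competing terms and the two side constraints of (\ref{37}), one must verify that the claimed exponents are simultaneously extremal, and a secondary difficulty is propagating the Filaseta--Trifonov divisibility through the reduction step, since the differences $\Delta_a f$ do not automatically inherit the relation $\lambda_2=N\lambda_3$ and a suitable analogue must be recovered for each admissible $a$.
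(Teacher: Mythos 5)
The paper does not prove this theorem at all: it is stated as a citation of Trifonov's 2002 result (quoted from \citep[Theorem 5.9]{huxley}) for the sake of completeness, with no proof given. So there is nothing in the paper to compare your argument against, and your sketch has to stand on its own.

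As a roadmap the direction is plausible — Trifonov's argument does grow out of the reduction principle, divided differences and a divisibility constraint in the Filaseta--Trifonov style — but what you have written is not a proof and contains gaps beyond the ones you flag. A single pass of Lemma \ref{lem1} followed by Branton--Sargos applied to $\Delta_a f$, summed over $a\le A$ and then optimised in $A$ by one application of Lemma \ref{lem2}, yields exponents with small integer denominators coming from a one-step Srinivasan balance; this mechanism cannot by itself produce denominators like $27$, $25$ or $13$. Those arise only after at least a second differencing step $\Delta_a\Delta_b f$, a genuinely two-variable divisibility argument on the resulting divided differences, and a case split between short and long gap regimes — none of which is present in the sketch, where the divisibility step is described only by analogy to Lemma \ref{lem9}. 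You also never say where the hypothesis $N\delta^2\le C^{-1}$ from (\ref{37}) enters; its role is to force the perturbation term in the divided-difference identity strictly below the integer threshold so that the congruence constraint actually bites, and without spelling this out the dichotomy you invoke has no content. Finally, the correspondence you assert between your ``six competing terms'' and the seven terms in the statement is a claim, not a derivation: the multi-parameter optimisation under the two side constraints of (\ref{37}) is precisely the hard part, and you acknowledge not having done it. So: reasonable intuitions about the ingredients, but the double-differencing, the explicit divisibility step and the exponent bookkeeping — the places where the proof actually lives — are all missing.
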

The above two improvements were done on the main term of the inequality we get in Theorem \ref{thm1}. As we can see in the case where $k=3$, the main term in the Theorem \ref{thm1} is $N\lambda^{1/6}$. The above main terms are all smaller than that. Which will give a smaller upper bound for the number of integer points we are trying to estimate when the main terms dominate.
\subsection{Smooth Curves}
Simply by using the first derivative test and letting $\delta\rightarrow 0$, we get deduce a bound of the number of integer points lying on the arc of the curve $y=f(x)$ with $N<x\leq 2N$, which is $\ll N\lambda_k^{\frac{2}{k(k+1)}}+1$. Historically, this number was first investigated by Jarník \citep{jar} who proved that a strictly convex arc $y=f(x)$ with length $L$ has at most $\leq \frac{3}{(2\pi)^{1/3}}L^{2/3}+O(L^{1/3})$ integer points on the arc of the curve, this is a nearly best possible result under the sole condition of convexity.

However, Swinnerton Dyer \citep{swin} and Schmidt \citep{schmidt} proved independently that if $f\in C^3[0,N]$ is such that $|f(x)|\leq N$ and $f'''(x)\neq 0$ for all $x\in [0,N]$, then the number of integer points on the arc $y=f(x)$ with $0\leq x\leq N$ is $\ll N^{3/5+\epsilon}$. This result was then generalized by Bombieri and Pila \citep{bomb} as follows \citep[Proposition 5.3]{huxley}.

\begin{prop}[Bombieri-Pila]
Let $N\geq 1$, $k\geq 4$ be integers and set $K=\binom{k+2}{2}$. Let $\mathcal{I}$ be an interval with length $N$ and $f\in C^k(\mathcal{I})$ satisfying $|f'(x)|\leq 1$, $f''(x)>0$ and such that the number of solutions of the equation $f^{(K)}(x)=0$ is $\leq m$. Then there exists a constant $c_0=c_0(k)>0$ such that the number of integer points on the arc $y=f(x)$ with $x\in \mathcal{I}$ is 
\begin{align*}
\leq c_0(m+1)N^{1/2+3/(k+3)}
\end{align*}
\end{prop}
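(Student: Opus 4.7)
The plan is to apply the determinant method of Bombieri and Pila, combined with a preliminary decomposition of $\mathcal{I}$ driven by the zero set of $f^{(K)}$. First I would split $\mathcal{I}$ into the at most $m+1$ maximal sub-intervals $\mathcal{J}$ on which $f^{(K)}$ has constant sign; the hypothesis on the zeros of $f^{(K)}$ legitimises this, and on each such $\mathcal{J}$ the lower-order derivatives $f^{(K-1)}, f^{(K-2)}, \dots$ inherit a uniformly bounded number of sign changes by iterated monotonicity. I would then subdivide each $\mathcal{J}$ further into sub-arcs of horizontal length $L$, with $L$ to be optimised at the end, and analyse one sub-arc at a time. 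After translating we may assume $|y| \ll N$ on the sub-arc, since $|f'| \le 1$.

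On a single sub-arc of length $L$, suppose $(x_1, \lfloor f(x_1) \rceil), \dots, (x_R, \lfloor f(x_R) \rceil)$ are the integer points. For any choice of $K$ of them I would form the $K\times K$ matrix with $(i,j)$-entry $x_i^{a_j}\lfloor f(x_i)\rceil^{b_j}$, where the pairs $(a_j,b_j)$ run through the $K = \binom{k+2}{2}$ monomial exponents $a+b\le k$. Its determinant $D$ is an integer. Using Taylor expansion of $f$ about the centre of the sub-arc and column/row operations of divided-difference type, in the same spirit as (\ref{3}) and the proof of Theorem \ref{thm4}, one can transform the matrix into a Vandermonde-like form in the $x_i$ and extract a large power of $L$. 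This yields a bound $|D| \le C_k\, N^{\alpha(k)} L^{\beta(k)}$ with $\beta(k)$ much larger than $\alpha(k)$. Choosing $L$ below the threshold that makes this right-hand side strictly less than $1$ forces $D=0$, so every $K$-tuple of integer points on the sub-arc lies on a common non-trivial $g(x,y)\in\mathbb{R}[x,y]$ of total degree $\le k$.

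The integer points on the sub-arc are thus zeros of the $C^k$-function $g(x,f(x))$. I would then bound the number of such zeros on the whole sub-interval $\mathcal{J}$ by $O_k(1)$ via an iterated Rolle argument: each successive derivative of $g(x,f(x))$ is a polynomial in $x$ and in $f, f', \dots, f^{(j)}$, and after sufficiently many differentiations one produces an expression whose sign is controlled by $f^{(K)}$, which is sign-definite on $\mathcal{J}$ by construction. Summing $O_k(1)$ integer points per sub-arc over the $\asymp N/L$ sub-arcs in each $\mathcal{J}$ and then over the $\le m+1$ pieces gives a total bound of order $(m+1) N/L$. Balancing with the determinant constraint and picking $L \asymp N^{1/2 - 3/(k+3)}$ produces the stated bound $c_0(m+1)N^{1/2 + 3/(k+3)}$.

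The principal obstacle is the sharpness of the determinant estimate: one must organise the row and column operations so that each $y$-column of monomials is paired with the Taylor polynomial of the appropriate order, and then verify by a careful bookkeeping of exponents that the combined power of $L$ gained dominates the growth $C_k N^{\alpha(k)}$ for the chosen $L$. A secondary subtlety is making the Rolle iteration rigorous, in particular showing that the polynomial expression obtained after $K$ differentiations really forces a sign-change of $f^{(K)}$ whenever $g(x,f(x))$ has too many zeros, so that the constant-sign regime on each $\mathcal{J}$ is genuinely used.
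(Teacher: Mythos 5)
The paper does not prove this proposition: it is quoted from Bombieri--Pila via Bordell\`es as context for Huxley's subsequent result, so there is no in-paper argument to compare against, and I can only assess your sketch on its own terms. Your outline does capture the shape of the Bombieri--Pila determinant method (subdivide $\mathcal{I}$ by the zeros of $f^{(K)}$ and then into length-$L$ sub-arcs; force a $K\times K$ monomial determinant at lattice points to vanish; conclude the lattice points on each sub-arc lie on a plane curve of degree $\leq k$; control the intersections of the arc with each such curve), which is the right strategy. But two of the steps you flag as bookkeeping are in fact where the substantive work lives. The determinant bound: the hypotheses give only $|f'|\le 1$, $f''>0$ and the zero count of $f^{(K)}$, with no upper bound at all on $|f''|,|f'''|,\dots$, so a Taylor expansion of $f$ about the centre of a sub-arc does not control the matrix entries; Bombieri and Pila instead bound divided differences of the compositions $x\mapsto x^a f(x)^b$ directly using only $|f'|\le 1$, and this is a genuine idea rather than an exponent count that falls out of the framework you describe.

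More serious is the iterated Rolle step. You want $g(x,f(x))$ to have $O_k(1)$ zeros on a sub-interval $\mathcal{J}$ on which $f^{(K)}$ is sign-definite, but sign-definiteness of $f^{(K)}$ does not preclude $g(x,f(x))\equiv 0$: take $f(x)=-\sqrt{-x}$ on an interval in $(-\infty,-1/4]$, which has $|f'|\le 1$, $f''>0$ and $f^{(K)}$ nowhere zero, yet is annihilated identically by $g(x,y)=y^2+x$. In that regime there are no zeros for Rolle to act on, and the count of lattice points per sub-arc must come from elsewhere --- essentially from Bombieri and Pila's separate bound on lattice points lying on a fixed plane algebraic curve, which your argument never invokes. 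A correct proof has to distinguish the transversal case, where a Rolle-type iteration through the chain $g$, $g_x+g_yf'$, \dots can work once the sign changes of the intermediate derivatives $f^{(j)}$ ($j<K$) are accounted for, from the degenerate case where the determinant step hands you a curve coinciding with the graph on that sub-arc; the hypothesis on $f^{(K)}$ alone does not remove the second possibility.
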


The ideas of Bombieri and Pila have been extended by Huxley \citep{huxley2007integer} to apply on the problems involving counting the number of integer points which are very close to regular curves, an explicit statement can also be found in \citep[Proposition 5.4]{huxley}. The function is supposed to be $C^5$ and along with the usual non-vanishing conditions of the derivatives on $[N,2N]$, the proof also requires lower bounds of the following determinants:
\begin{align*}
D_1(f;x)=\begin{vmatrix}
f'''(x) & 3f''(x)\\
f^{(4)}(x) & 4f'''(x)\\
\end{vmatrix}\begin{matrix} \vphantom{x}\\ \vphantom{y} .\end{matrix}
\end{align*}
\begin{align*}
D_2(f;x)=\frac{1}{2f''(x)}\begin{vmatrix}
f'''(x) & 3f''(x) & 0\\
f^{(4)}(x) & 4f'''(x) & 6f''(x)^2\\
f^{(5)}(x) & 5f^{(4)}(x) & 20f''(x)f'''(x)\\
\end{vmatrix}\begin{matrix} \vphantom{x}\\ \vphantom{y} \\\vphantom{z}.\end{matrix}
\end{align*}
\begin{prop}[Huxley]
Assume that $f\in C^5[N,2N]$ such that there exist real numbers $C,T\geq 1$ such that
\begin{align*}
&|f^{(j)}(x)|\leq C^{j+1}j!\times \frac{T}{N^j} \quad (j=1,\cdots,5)\\
&|f^{(j)}(x)|\geq \frac{j!}{C^{j+1}}\times\frac{T}{N^j} \quad(j=2,3)\\
&|D_1(f;x)|\geq 144C^{-8}\times \frac{T^2}{N^6},\quad |D_2(f;x)|\geq 4320C^{-12}\times \frac{T^3}{N^9}.
\end{align*}
Let $0<\delta<\frac{1}{4}$ be a real number. Then we have
\begin{align*}
\mathcal{R}(f,N,\delta)\ll (NT)^{4/15}+N(\delta^{11} T^9)^{1/75}.
\end{align*}
The implied constant depends only on $C$.
\end{prop}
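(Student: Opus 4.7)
The plan is to follow the Bombieri--Pila determinant method, adapted by Huxley to count integer points that are $\delta$-close to, rather than lying exactly on, the curve $y = f(x)$. The central idea is a divide-and-conquer over short subintervals, combined with an algebraic observation: on any subinterval over which $f$ can be tightly approximated by a low-degree polynomial, the proximity hypothesis forces the integer points in question either to lie on a genuine auxiliary algebraic curve of small degree, or to be few in number because a certain integer-valued determinant would otherwise be both nonzero and absolutely bounded by something $<1$.

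First I would subdivide $[N,2N]$ into pieces of length $H$ (a free parameter). On each such piece, Taylor's theorem together with the derivative bounds $|f^{(j)}(x)| \asymp T/N^{j}$ shows that $f$ agrees with some polynomial $P$ of degree $k-1$ up to a remainder of size $\ll T(H/N)^{k}$, so any integer point $(n,m)$ counted by $\mathcal{R}(f,N,\delta)$ with $n$ in that piece satisfies $|P(n) - m| < \delta + T(H/N)^{k}$. Next, given $s$ such points, I would form a Vandermonde-type determinant $\Delta$ in monomials $x^{a} y^{b}$ of suitable weighted degree. If $\Delta = 0$ then the points lie on an algebraic curve of low degree, and one can invoke the sharp Bombieri--Pila / Schmidt bound for integer points on such a curve; if $\Delta \neq 0$ then $|\Delta| \geq 1$, and row operations based on Taylor expansion bound $|\Delta|$ from above by an explicit product of powers of $H$, $T$ and $\delta$, yielding an upper restriction on the number of points. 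The hypotheses on $D_{1}(f;x)$ and $D_{2}(f;x)$ enter precisely here: they serve as uniform, robust non-degeneracy conditions, playing the role that $f''' \neq 0$ played for Swinnerton-Dyer and Schmidt in the exact-counting case, and they guarantee that the auxiliary curve cannot cling to $y = f(x)$ along a long segment.

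Finally I would optimise $H$ (and the weighted degree used to construct $\Delta$) so as to balance the number $N/H$ of subintervals against the per-subinterval contribution, combining the bounds via a Srinivasan-type min--max argument in the spirit of Lemma \ref{lem2}; intermediate-derivative control should come from a Gorny-type estimate in the spirit of Lemma \ref{lem6}. The exponents $(NT)^{4/15}$ and $N(\delta^{11}T^{9})^{1/75}$ should emerge from solving this optimisation when the weighted degree is pushed up to the level permitted by $f \in C^{5}$ together with the $D_{1}, D_{2}$ bounds; the factorisations $4/15 = 4/(3\cdot 5)$ and $11/75 = 11/(3\cdot 5\cdot 5)$ are consistent with this interpretation. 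The \emph{main obstacle} will be carrying the bookkeeping through cleanly: one must track simultaneously the Taylor remainder, the $\delta$-slack in the proximity condition, and the precise way the lower bounds on $D_{1}$ and $D_{2}$ propagate through the determinant estimate, in order to pin the final exponents on $4/15$ and $11/75$ rather than on neighbouring values produced by a slightly different choice of auxiliary degree.
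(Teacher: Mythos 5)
The paper itself does not prove this proposition: it is stated as a known result of Huxley, cited from Huxley's 2007 paper and from Bordell\`es (Proposition 5.4), with no proof reproduced. So there is no in-text argument here against which to check your work. With that caveat, your sketch does describe the correct general shape of Huxley's argument. The method is indeed a Bombieri--Pila--Swinnerton-Dyer determinant method adapted to the $\delta$-close setting: partition $[N,2N]$, approximate $f$ on each piece by a Taylor polynomial, form a determinant in monomials $x^a y^b$ from a cluster of near-integer points, and split into the case where the determinant vanishes (points on a low-degree auxiliary curve) and the case where it is a nonzero integer, hence at least $1$ in absolute value, yet bounded above by Taylor and proximity estimates. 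You also correctly identify that $D_1$ and $D_2$ are Wronskian-type non-degeneracy conditions playing the role that $f'''\neq0$ plays for Swinnerton-Dyer and Schmidt: they prevent the graph from hugging an auxiliary conic too closely over a long stretch.

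However, this is a plan, not a proof, and the gap between them is exactly the part you flag as ``the main obstacle.'' Nothing in the sketch actually produces the exponents $4/15$ and $11/75$: you do not fix the weighted degree of the auxiliary curves (which, given $f\in C^5$ together with $D_1,D_2$, should be degree $2$ conics with $K=\binom{2+2}{2}=6$ monomials and a $6\times6$ determinant), you do not carry out the two-sided determinant estimate, you do not make precise how the lower bounds on $D_1$ and $D_2$ enter that estimate, and you do not perform the Srinivasan-type optimisation over the subinterval length $H$. The observation that $4/15=4/(3\cdot5)$ and $11/75=11/(3\cdot5\cdot5)$ is suggestive but carries no logical weight. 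As it stands, the proposal is a faithful description of the known route to the result, but it does not constitute an independent proof of the proposition, nor does the surveyed source text supply one.
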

The main term of this result is very good, since in the second derivative test, the main term only yields $(NT)^{1/3}$ and Theorem \ref{thm11} can only give $(NT)^{3/10}$. A smaller main term will give a better upper bound for the number of integer points when the main term dominates. On the other hand, the secondary term of this result is too large, and thus useless in many applications. But this may be improved subject to some additional non-vanishing conditions of certain quite complicated determinants \citep[p. 403]{huxley}.
\section{Applications}
\subsection{Diophantine Inequality}

One interesting application arises naturally from the methods we have been discussing is to estimate the number of solutions of a given Diophantine inequality. Consider the following Diophantine inequality
\begin{align*}
|\alpha n^2+\beta m^2-z|\leq \delta.
\end{align*}
where $\alpha,\beta,z\in \mathbb{N}$ and $\delta$ small. The goal is to estimate the number of non-trivial solutions as $(n,m)$ range over a closed interval, The inequality can be then reformulated as a problem of finding integer points near the planar curve $\mathcal{C}\subset \mathbb{R}^2$ given by 
\begin{align*}
\alpha n^2+\beta m^2 -z=0.
\end{align*}

Similar ideas have also been discussed by Damaris Schindler \citep{schindler2020diophantine} where Diophantine inequalities for generic ternary diagonal forms are in concern. Fix some degree $k\geq 2$ and let $\alpha_2,\alpha_3\in \mathbb{R}_{>0}$, Let $\theta>0$ and consider the inequality
\begin{align*}
|x_1^k-\alpha_2x_2^k-\alpha_3x_3^k|<\theta. \tag{38}\label{38}
\end{align*}
The goal here is to understand when this inequality has a non-trivial solution if the variables $x_i$ are allowed to range over a box of size $P$. With some techniques rather complicated, we would eventually be able to reformulate the inequality (\ref{38}) as a problem of finding rational points near the planar curve $\mathcal{C}\in \mathbb{R}^2$ given by $1-\alpha_2y_2^k-\alpha_3y_3^k=0$. Finding solutions to (\ref{38}) would be translated into studying a counting function of the type with $\delta=\frac{\theta}{P^{k-1}}$
\begin{align*}
N_{\mathcal{C}}(P,\delta)=\sharp \Big\lbrace \frac{p}{q}\in \mathbb{Q}^2 : 1\leq q\leq P, \operatorname{dist} \Big(\frac{p}{q},\mathcal{C}\Big)\ll \frac{\delta}{q}\Big\rbrace.
\end{align*}
\subsection{The Squarefree Number Problem}
Huxley and Sargos's Theorem also can be applied on the squarefree number problem. Without using the theorem, we could still get the basic result as follows \citep[Lemma 5.3]{huxley}.

\begin{lem}
Let $x,y$ satisfy $16\leq y<\frac{1}{4}\sqrt{x}$ and $2\sqrt{y}\leq A<B\leq 2\sqrt{x}$. Then
\begin{align*}
\sum_{x<n\leq x+y}\mu_2(n)=\frac{y}{\zeta(2)}+O\Big(  (R_1+R_2)\log x +A\Big),
\end{align*}
where $R_1=R_1(A,B)$ and $R_2=R_2(B)$ are defined by
\begin{align*}
R_1=\underset{A < N \leq B}{\max} \mathcal{R}\Big( \frac{x}{n^2},N,\frac{y}{N^2} \Big) \quad \text{and} \quad R_2=\underset{N\leq 2x/B^2}{\max} \mathcal{R}\Big( \sqrt{\frac{x}{n}},N,\frac{y}{\sqrt{Nx}} \Big).
\end{align*}
\end{lem}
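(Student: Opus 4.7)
The plan is to invoke the Möbius-based identity $\mu_2(n)=\sum_{d^2\mid n}\mu(d)$ to rewrite
\begin{align*}
\sum_{x<n\leq x+y}\mu_2(n)=\sum_{d\leq\sqrt{x+y}}\mu(d)\Bigl(\lfloor (x+y)/d^2\rfloor-\lfloor x/d^2\rfloor\Bigr),
\end{align*}
and then split the $d$-range into three zones whose cut-offs are precisely the parameters in the definitions of $R_1$ and $R_2$: small ($d\leq A$), medium ($A<d\leq B$), and large ($d>B$). Each zone will be estimated by a different technique, and the dyadic decompositions will be tuned so that they mesh with the ranges prescribed by $R_1$ and $R_2$.

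For the small zone $d\leq A$ I would use $\lfloor(x+y)/d^2\rfloor-\lfloor x/d^2\rfloor=y/d^2+O(1)$. Summing against $\mu(d)$ extracts the main term $y\sum_{d\geq 1}\mu(d)/d^2=y/\zeta(2)$, with an error $O(A)$ from the $A$ unit errors and an error $O(y/A)=O(\sqrt{y})=O(A)$ from completing the tail of the Dirichlet series (using $A\geq 2\sqrt{y}$). For the middle zone $A<d\leq B$ the interval $(x/d^2,(x+y)/d^2]$ has length $y/d^2\leq y/A^2<1$ (precisely where $A\geq 2\sqrt{y}$ is exploited), so the difference of floors is $0$ or $1$, and it equals $1$ exactly when $x/d^2$ is within $y/d^2$ of a strictly larger integer. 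Decomposing dyadically into ranges $n\in[N,2N]$ with $A<N\leq B$ bounds the number of contributing $d$ by $\mathcal R(x/n^2,N,y/N^2)\leq R_1$, and summing over the $O(\log x)$ dyadic scales contributes $O(R_1\log x)$.

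For the large zone $d>B$ the natural move is to swap the order of summation: writing $m=\lfloor(x+y)/d^2\rfloor$, the condition $x<md^2\leq x+y$ becomes a condition on $d$ for each fixed $m$. The constraint $d>B$ forces $m\leq (x+y)/B^2\ll x/B^2$, and $d$ must lie in $(\sqrt{x/m},\sqrt{(x+y)/m}]$, an interval of length $\asymp y/\sqrt{mx}$. Since $d$ is an integer, this occurs iff $\sqrt{x/m}$ is within $\ll y/\sqrt{mx}$ of an integer. A dyadic split $m\in[N,2N]$ with $N\leq 2x/B^2$ then bounds the count by $\mathcal R(\sqrt{x/n},N,y/\sqrt{Nx})\leq R_2$, and summing the $O(\log x)$ scales yields $O(R_2\log x)$. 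Adding the three contributions produces the stated formula.

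The chief obstacle will be the dyadic book-keeping: checking that the normalisations $\delta=y/N^2$ and $\delta=y/\sqrt{Nx}$ in the definitions of $R_1$ and $R_2$ really majorise the nearest-integer-distance conditions that arise on each dyadic block (with the correct orientation of the approximating integer), ensuring the boundary between the $d$-sum and the $m$-sum at $d=B$ is handled without double-counting, and absorbing the $O(\log x)$ arising from the dyadic scales into the stated error. The remaining estimates are then robust and rely only on the counting set-up developed earlier in the chapter.
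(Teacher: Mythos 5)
The paper states this lemma without proof — it is cited from Bordell\`es's \emph{Arithmetic Tales} and then applied — so there is no in-text argument to compare your sketch against. Your reconstruction is the standard and correct one: you expand $\mu_2(n)=\sum_{d^2\mid n}\mu(d)$, split the resulting $d$-sum at the thresholds $A$ and $B$, and treat each zone appropriately. In the range $d\leq A$ the approximation $\lfloor(x+y)/d^2\rfloor-\lfloor x/d^2\rfloor=y/d^2+O(1)$ yields the main term $y/\zeta(2)$, with the hypothesis $A\geq2\sqrt{y}$ absorbing both the $O(A)$ rounding errors and the $O(y/A)$ Dirichlet tail into $O(A)$. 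In the range $A<d\leq B$ the interval $(x/d^2,(x+y)/d^2]$ has length $y/d^2\leq y/A^2<1$, so the floor difference is an indicator that $x/d^2$ lies within $y/d^2\leq y/N^2$ below an integer, which dyadic blocks $[N,2N]$ with $A<N\leq B$ majorise by $\mathcal{R}(x/n^2,N,y/N^2)$, giving $O(R_1\log x)$ over the $O(\log x)$ scales. In the tail $d>B$ the inversion $d\leftrightarrow m$ works because the interval $(\sqrt{x/m},\sqrt{(x+y)/m}]$ has length $\asymp y/\sqrt{mx}<1$ (so at most one valid $d$ per $m$) and forces $m<(x+y)/B^2<2x/B^2$, reducing the count to $\mathcal{R}(\sqrt{x/n},N,y/\sqrt{Nx})$ on each block and hence $O(R_2\log x)$. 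The caveats you flag — the one-sided floor condition being dominated by the two-sided nearest-integer distance $\|\cdot\|$, the boundary at $d=B$, and the $\log x$ from the dyadic scales — are the genuine technical points, and your plan resolves them correctly.
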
 
Now suppose first that $y\leq x^{4/9}$, if we use the above lemma with $A=2x^{2/9}$ and $B=x^{1/3}$, by the restriction stated in the lemma, we have $A>2y^{1/2}$ and hence
\begin{align*}
\sum_{x<n\leq x+y}\mu_2(n)=\frac{y}{\zeta(2)}+O\Big(  R_1(2x^{2/9},x^{1/3})\log x+R_2(x^{1/3})\log x+x^{2/9}   \Big).
\end{align*}
Notice that. The restriction on the second derivative test i.e. Theorem \ref{thm3} is satisfied by the function in its respective range of summation. So using Theorem \ref{thm3} we can get
\begin{align*}
R_2\Big( x^{1/3}  \Big)\ll \underset{N\leq 2x^{1/3}}{\max} \Big( (Nx)^{1/6}+y^{1/2}(N^3x^{-1})^{1/4} \Big)\ll x^{2/9}+yx^{-2/9}.
\end{align*}
Furthermore, if we use the Theorem of Huxley and Sargos with $k=3$ for $R_1(2x^{2/9},x^{1/3})$, we will have
\begin{align*}
R_1(2x^{2/9,x^{1/3}}) &\ll \underset{2x^{2/9}<N\leq x^{1/3}}{\max}\Big\lbrace   (Nx)^{1/6}+(Ny)^{1/3}+N(yx^{-1})^{1/3} \Big\rbrace\\
& \ll x^{2/9}+x^{1/9}y^{1/3}.
\end{align*}
Thus, if $y\leq x^{1/3}$, we will have
\begin{align*}
\sum_{x<n\leq x+y}\mu_2(n)=\frac{y}{\zeta(2)}+O\Big(x^{2/9}\log x\Big).
\end{align*}
Which means that there exists a constant $c_0>0$ such that if $c_0x^{2/9}\log x\leq y<\frac{1}{4}\sqrt{x}$, the interval $[x,x+y]$ contains a squarefree number is since by the condition we already have $y\geq 16$, to guarantee that the RHS non-zero, we only need to make sure $\frac{y}{\zeta (2)}$ is bigger than the approximated term $O(x^{2/9}\log x)$ for some such $c_0$.
\section*{Acknowledgements}
This manuscript is derived from my third-year bachelor's dissertation at the University of Warwick. I would like to express my sincere gratitude to Dr. Simon L. Rydin Myerson for his invaluable advice and supervision throughout the development of this essay. Additionally, I acknowledge the exceptional work of Bordellès; without his beautifully translated and meticulously organized textbook, Arithmetic Tales, this essay would not have been possible.

\newpage
\nocite{*}
\bibliography{Biblio}

\begin{thebibliography}{20}
\providecommand{\natexlab}[1]{#1}
\providecommand{\url}[1]{\texttt{#1}}
\expandafter\ifx\csname urlstyle\endcsname\relax
  \providecommand{\doi}[1]{doi: #1}\else
  \providecommand{\doi}{doi: \begingroup \urlstyle{rm}\Url}\fi

\bibitem[Bordellès(2006)]{huxley}
O~Bordellès.
\newblock \emph{Arithmetic Tales}.
\newblock Springer, 2006.

\bibitem[Branton and Sargos(1994)]{branton1994points}
M~Branton and P~Sargos.
\newblock Points entiers au voisinage d'une courbure plane {\`a} tr{\`e}s
  faible courbure.
\newblock \emph{Bulletin des sciences math{\'e}matiques (Paris. 1885)},
  118\penalty0 (1):\penalty0 15--28, 1994.

\bibitem[Filaseta and Trifonov(1992)]{filaseta1992gaps}
Michael Filaseta and Ognian Trifonov.
\newblock On gaps between squarefree numbers ii.
\newblock \emph{Journal of the London Mathematical Society}, 2\penalty0
  (2):\penalty0 215--221, 1992.

\bibitem[Filaseta and Trifonov(1996)]{filaseta1996distribution}
Michael Filaseta and Ognian Trifonov.
\newblock The distribution of fractional parts with applications to gap results
  in number theory.
\newblock \emph{Proceedings of the London Mathematical Society}, 3\penalty0
  (2):\penalty0 241--278, 1996.

\bibitem[Gorny(1939)]{gorny1939contribution}
Ayzyk Gorny.
\newblock Contribution {\`a} l’{\'e}tude des fonctions d{\'e}rivables d’une
  variable r{\'e}elle.
\newblock \emph{Acta Mathematica}, 71\penalty0 (1):\penalty0 317--358, 1939.

\bibitem[Huxley and Sargos(2006)]{huxley2006points}
M.N Huxley and P Sargos.
\newblock Points entires au voisinage d'une courbe plane de classe $
  \mathcal{C}^{n} $, ii.
\newblock \emph{Functiones et Approximatio Commentarii Mathematici},
  35:\penalty0 91--115, 2006.

\bibitem[Huxley(2007)]{huxley2007integer}
M.N Huxley.
\newblock The integer points in a plane curve.
\newblock \emph{Functiones et Approximatio Commentarii Mathematici},
  37\penalty0 (1):\penalty0 213--231, 2007.

\bibitem[Huxley(1996{\natexlab{a}})]{huxley1996area}
M.N Huxley.
\newblock \emph{Area, lattice points, and exponential sums}, volume~13.
\newblock Clarendon Press, 1996{\natexlab{a}}.

\bibitem[Huxley(1996{\natexlab{b}})]{huxley1996integer}
M.N Huxley.
\newblock The integer points close to a curve ii.
\newblock \emph{Progress in Mathematics-Boston-}, 139:\penalty0 487--516,
  1996{\natexlab{b}}.

\bibitem[Huxley(1999)]{hux1999}
M.N Huxley.
\newblock The integer points close to a curve iii.
\newblock \emph{In: Gyory et al. (eds.) Number Theory in Progress}, 2:\penalty0
  911--940, 1999.

\bibitem[Huxley and Sargos(1995)]{huxley1995points}
M.N~Huxley and P~Sargos.
\newblock Points entiers au voisinage d'une courbe plane de classe
  $\mathcal{C}^n$.
\newblock \emph{Acta Arithmetica}, 69\penalty0 (4):\penalty0 359--366, 1995.

\bibitem[Konyagin(1999)]{konyagin1999estimates}
S.V~Konyagin.
\newblock Estimates of the least prime factor of a binomial coefficient.
\newblock \emph{Mathematika}, 46\penalty0 (1):\penalty0 41--55, 1999.

\bibitem[Lagrange(1882)]{lagrange1882oeuvres}
Joseph~Louis Lagrange.
\newblock \emph{Oeuvres de Lagrange}, volume~13.
\newblock Gauthier-Villars, 1882.

\bibitem[Letendre(2019)]{letendre2019number}
Patrick Letendre.
\newblock The number of integer points close to a polynomial.
\newblock \emph{arXiv preprint arXiv:1901.10004}, 2019.

\bibitem[Mandelbrojt(1952)]{mandelbrojt1952series}
Szolem Mandelbrojt.
\newblock S{\'e}ries adh{\'e}rentes r{\'e}gularisation des suites applications:
  le{\c{c}}ons profess{\'e}es au \uppercase{C}oll{\`e}ge de \uppercase{F}rance
  et au \uppercase{R}ice \uppercase{I}nstitute.
\newblock \emph{Journal}, 1952.

\bibitem[Matom{\"a}ki and Ter{\"a}v{\"a}inen(2023)]{matomaki2023mobius}
Kaisa Matom{\"a}ki and Joni Ter{\"a}v{\"a}inen.
\newblock On the \uppercase{M}{\"o}bius function in all short intervals.
\newblock \emph{Journal of the European Mathematical Society (EMS Publishing)},
  25\penalty0 (4), 2023.

\bibitem[Schindler(2020)]{schindler2020diophantine}
Damaris Schindler.
\newblock Diophantine inequalities for generic ternary diagonal forms.
\newblock \emph{International Mathematics Research Notices}, 2020\penalty0
  (11):\penalty0 3396--3416, 2020.

\bibitem[Shiu(1980)]{shiu1980number}
P~Shiu.
\newblock On the number of square-full integers between successive squares.
\newblock \emph{Mathematika}, 27\penalty0 (2):\penalty0 171--178, 1980.

\bibitem[Srinivasan(1962)]{srinivasan1962van}
B.R~Srinivasan.
\newblock On van der \uppercase{C}orput’s and \uppercase{N}ieland’s results
  on the \uppercase{D}irichlet’s divisor problem and the circle problem.
\newblock In \emph{Proc Natl Inst Sci India, Part A}, volume~28, pages
  732--742, 1962.

\bibitem[Trifonov(2002)]{trifonov2002lattice}
Ognian Trifonov.
\newblock Lattice points close to a smooth curve and squarefull numbers in
  short intervals.
\newblock \emph{Journal of the London Mathematical Society}, 65\penalty0
  (2):\penalty0 303--319, 2002.
  
\bibitem[Landau(1913)]{landau}
E Landau. \newblock Ungleichungen f\"ur zweimal differenzierbare Funktionen.
\newblock \emph{Proc. London Math. Soc}, 13:\penalty0
  \penalty0 43--49, 1913.
  
\bibitem[A Kolmogorov(1949)]{ako}
A Kolmogorov. \newblock On Inequalities Between the Upper Bounds of the Successive Derivatives of an Arbitrary Function on an Infinite Interval.
\newblock \emph{Amer. Math. Soc. Transl.}, 1-2:\penalty0
  \penalty0 233--243, 1949.
  
\bibitem[L Neder(1930)]{Lned}
L Neder. \newblock Absch\"atzungen f\"ur die Ableitungen einer reellen Funktion eines reellen Arguments.
\newblock \emph{Mathematische Zeitschrift}, 31:\penalty0
  \penalty0 356--365, 1930.

\bibitem[huxley(1996)]{hss}
M.N Huxley and Ognian Trifonov. \newblock The square-full numbers in an interval.
\newblock \emph{Mathematical Proceedings of the Cambridge Philosophical Society}, 199\penalty0
  (2):\penalty0 201--208, \newblock Cambridge University Press, 1996.
  
\bibitem[jarnik(1926)]{jar}
V.V Jarník. \newblock Über die Gitterpunkte auf konvexen Kurven.
\newblock \emph{Mathematische Zeitschrift}, 24:\penalty0
  \penalty0 500-518, 1926.
  
\bibitem[Swinnerton(1974)]{swin}
H.P.F Swinnerton-Dyer. \newblock The number of lattice points on a convex curve.
\newblock \emph{Number Theory}, 6:\penalty0
  \penalty0 128--135, 1974.
  
\bibitem[Schmidt(1985)]{schmidt}
W.M Schmidt. \newblock Integer Points on Curves and Surfaces.
\newblock \emph{Number Theory}, 6:\penalty0
  \penalty0 128--135, 1974.
  
\bibitem[bombieri(1989)]{bomb}
E Bombieri and J Pila. \newblock The number of integral points on arcs and ovals. 1989.

\end{thebibliography}

\end{document}